\documentclass{scrartcl}

\usepackage{lmodern}
\usepackage[utf8]{inputenc}
\usepackage[T1]{fontenc}
\usepackage[english]{babel}
\usepackage{csquotes}
\usepackage{subcaption}
\usepackage{stmaryrd}
\usepackage[backend=biber,style=alphabetic,url=false,giveninits=true,sorting=anyt]{biblatex}
\usepackage{amsmath,amssymb,amsthm}
\usepackage{mathtools,thmtools,esint}
\usepackage{mathrsfs}

\usepackage[hypertexnames=false]{hyperref}
\hypersetup{colorlinks=true}

\usepackage[nameinlink]{cleveref}
\crefname{equation}{}{f}
\crefname{enumi}{}{}

\usepackage[svgnames]{xcolor}
\usepackage{graphicx}
\usepackage[shortlabels]{enumitem}
\usepackage{tikz}
\usepackage{bm}
\usepackage{bbold}

\numberwithin{equation}{section}

\declaretheorem[style=plain,numberwithin=section]{theorem}
\declaretheorem[style=plain,numberlike=theorem]{lemma, proposition, corollary}
\declaretheorem[style=definition,numberwithin=section]{definition}
\declaretheorem[style=remark,numberlike=definition]{remark,example}

\declaretheorem[name=Theorem A,numbered=no,style=plain]{theoremA}

\newtheorem*{theorem*}{Theorem}

\newcommand{\N}{\mathbb{N}}
\newcommand{\Z}{\mathbb{Z}}

\newcommand{\R}{\mathbb{R}}
\newcommand{\T}{\mathbb{T}}

\newcommand{\ball}{\mathbb{B}}
\newcommand{\Hn}[1]{H_{#1}E}

\newcommand{\OT}{\mathsf{OT}}

\newcommand{\mres}{\mathbin{\vrule height 1.6ex depth 0pt width
0.13ex\vrule height 0.13ex depth 0pt width 1.3ex}}
\newcommand{\eps}{\varepsilon}

\newcommand{\dd}{\mathop{}\mathopen{}\mathrm{d}}

\newcommand{\defeq}{\stackrel{\text{def.}}{=}}

\DeclareMathOperator{\spn}{span}

\DeclareMathOperator{\img}{Img}

\DeclareMathOperator{\vol}{vol}

\DeclareMathOperator{\dist}{dist}
\DeclareMathOperator{\id}{Id}

\DeclareMathOperator{\rank}{rank}

\DeclareMathOperator{\spt}{spt}
\DeclareMathOperator{\entropy}{Ent}

\DeclareMathOperator{\interior}{int}

\DeclareMathOperator{\x}{\textbf{x}}
\DeclareMathOperator{\proj}{\mathrm{proj}}

\DeclareMathOperator{\gibbs}{\mathbb{P}_\varepsilon}
\DeclareMathOperator{\unnormalizedgibbs}{\widetilde{\mathbb{P}}_\varepsilon}
\DeclareMathOperator{\flow}{\mathfrak m_{t,\varepsilon}}
\DeclareMathOperator{\hess}{Hess}

\DeclarePairedDelimiterX\set[2]{\{}{\}}{#1\,\delimsize\vert\,#2}
\DeclarePairedDelimiterX\inp[2]{\langle}{\rangle}%
{#1\,\delimsize\vert\,\mathopen{}#2}

\title{First-order \texorpdfstring{\(\Gamma\)-}{Γ-}expansion for entropic optimal transport with non-degenerate costs}
\author{Luca Nenna\thanks{Universit\'e Paris-Saclay, CNRS, Laboratoire de math\'ematiques d'Orsay, ParMA, Inria Saclay, 91405, Orsay, France. 
email: \texttt{luca.nenna@universite-paris-saclay.fr}}\;\thanks{Institut Universitaire de France, Paris, France.},\;
Brendan Pass\thanks{Department of Mathematical and Statistical Sciences, 632 CAB, University of Alberta, Edmonton, Alberta, Canada, T6G 2G1.
email: \texttt{pass@ualberta.ca}},\;
Paul Pegon\thanks{CEREMADE, CNRS UMR 7534, Université Paris-Dauphine – PSL, Paris, France \& MOKAPLAN, Inria Paris, Paris, France. Email: \texttt{pegon@ceremade.dauphine.fr}} \; and Louis Tocquec\thanks{Universit\'e Paris-Saclay, CNRS, Laboratoire de math\'ematiques d'Orsay \& MOKAPLAN, Inria Paris, 75016 Paris, France. Email: \texttt{louis.tocquec@universite-paris-saclay.fr}}}
\date\today

\begin{document}

\maketitle

\begin{abstract}
Entropy-regularized optimal transport (EOT) has become a key tool in optimal transport, providing both theoretical insight and efficient computation via the Sinkhorn algorithm. We study the first-order \(\Gamma\)-expansion of EOT as the regularization parameter \(\varepsilon\) vanishes, by rewriting EOT as the minimization of a relative entropy with respect to a Gibbs measure \(\gibbs\) associated with the duality gap energy. Using a mild extension of the Laplace method, we prove the convergence of the Gibbs measures and derive this \(\Gamma\)-expansion for non-degenerate ground costs, under suitable assumptions on the duality gap, the contact set and the marginals. This yields at the same time an entropic selection principle and an expansion of the EOT cost up to \(o(\varepsilon)\) for this class of ground costs, generalizing existing results.

\medskip
\noindent\textbf{Keywords:} optimal transport, entropic regularization, \(\Gamma\)-convergence, \(\Gamma\)-expansion, entropic selection, non-degenerate costs, Laplace method, Gibbs measures, multi-marginal optimal transport.

\smallskip
\noindent\textbf{MSC 2020:} 49Q22 (primary); 49J45, 41A60, 60B10, 94A17.
\end{abstract}
% arXiv categories: math.OC (Optimization and Control, primary); cross-lists: math.AP (Analysis of PDEs), math.PR (Probability)
% HAL domains: Mathematics [math] / Optimization and Control [math.math-oc] (primary); Analysis of PDEs [math.math-ap]; Probability [math.math-pr]\tableofcontents

\section{Introduction}
\label{sec: intro}
Optimal transport (OT) has emerged as a central research topic at the intersection of probability theory, partial differential equations, and optimization, with applications in fields such as economics, machine learning, and image processing. The monographs of Villani \cite{villani2008optimal} and Santambrogio \cite{santambrogioOptimalTransportApplied2015} provide comprehensive accounts of its mathematical foundations. Applications to economics
are developed in \cite{galichon2016optimal}, while \cite{peyre2019computational} surveys computational methods and applications to data science. Large discrete transport problems can
nevertheless be expensive to solve, and high-dimensional statistical
estimation presents additional difficulties.

These computational challenges have renewed interest in entropy-regularized optimal transport (EOT). Entropic regularization leads to a strictly convex problem whose discrete solution can be computed by iterative matrix scaling, following the work of Sinkhorn \cite{sinkhornRelationshipArbitraryPositive1964,sinkhorn1967diagonal}. Its use for large-scale transport was popularized by Cuturi \cite{cuturi2013sinkhorn} and developed through iterative Bregman projections in \cite{benamou2015iterative}. Convergence results for the Sinkhorn algorithm range from the matrix-scaling analysis of Franklin and Lorenz \cite{franklin1989scaling} to continuous and multi-marginal settings \cite{marinoOptimalTransportApproach2020,eckstein2022quantitative}, and complexity guarantees for approximate discrete OT are established in \cite{altschuler2017near}. Statistical benefits at fixed
regularization have also been quantified for Sinkhorn divergences \cite{genevay2019sample} and entropic transport costs \cite{mena2019statistical}, with bounds that depend on the
regularization parameter. We refer to \cite{peyre2019computational,nutz2021introduction} for broader perspectives on EOT and its applications.

EOT also has a probabilistic interpretation through the Schrödinger problem and variational problems for stochastic processes \cite{mikami1990variational,leonardSurveySchrodingerProblem2014}, where \(\varepsilon\) plays the role of a noise or temperature parameter. It is therefore natural to investigate the behavior of the optimal plans and values as \(\varepsilon\) tends to zero. The zero-noise connection with quadratic OT was established by Mikami \cite{mikami2004monge}, and variational convergence was subsequently developed in \cite{leonard2012schrodinger,carlier2017convergence} in the two-marginal case, and in \cite{gerolin2020multimarginal} in the multi-marginal case, including singular repulsive costs. Convergence of the associated potentials is studied in \cite{nutz2022entropic}, and a complementary geometric approach is given by Bernton, Ghosal, and Nutz \cite{bernton2022geometry}, who describe concentration of the entropic optimizers through large-deviation estimates and relate the rate function to Kantorovich potentials.

Finer asymptotics of the optimal value have been studied in several
settings. For the quadratic cost, a next-order variational convergence of the entropic cost, viewed as a function of one marginal, arises in the connection between large deviations and Wasserstein gradient flows \cite{adams2011large,duong2013wasserstein,erbar2015large}. Pal \cite{pal2024difference} identifies the next asymptotic term for smooth strictly convex displacement costs under suitable regularity and elliptic assumptions on the potentials, while Conforti and Tamanini \cite{conforti2021formula} obtain differentiation formulas and a second-order expansion in the Schrödinger setting. For general \(C^2\) non-degenerate ground costs \(c\) (i.e.~such that the mixed Hessian \(D^2_{xy}c\) is invertible), Carlier, Pegon, and Tamanini \cite{carlier2023convergence} establish the leading correction \(\tfrac d2\varepsilon\log(1/\varepsilon)+O(\varepsilon)\), which is extended to the multi-marginal setting by Nenna and Pegon \cite{nenna2025convergence}. A different approach based on quantization of measures yields further convergence bounds in \cite{eckstein2024convergence}. The separate behavior of the transport and entropy terms appearing in the total EOT cost, together with quantitative convergence of the plans, is studied by Malamut and Sylvestre
\cite{malamut2025convergence}.

When the unregularized problem admits several minimizers, whether the entropic plans converge and, if so, which optimal plan is selected is an open problem in general. For discrete problems, the work of Cominetti and San Martín \cite{cominetti1994asymptotic} on exponential penalties in linear programming establishes convergence to the entropy-minimizing optimal solution, with exponential convergence rates, and non-asymptotic quantitative bounds were later obtained by Weed \cite{weed2018explicit}. In the continuous setting, a direct entropy selection argument still applies when an optimal plan has finite relative entropy \cite{nutz2021introduction}, but the main difficulty arises when every optimal plan is singular with respect to the product of the marginals, so that the relative entropy is infinite on the entire optimal set.

For the Euclidean distance cost on the real line, which is neither $C^1$ nor non-degenerate, the set of admissible destinations of a typical source $x$ is a one-dimensional segment, called a transport ray. In this case, Di Marino and Louet \cite{di2018entropic} identify an entropic selection through a renormalized \(\Gamma\)-limit under suitable integrability assumptions. Ley \cite{ley2025entropic} obtains complementary convergence results for less regular marginals under a componentwise mutual-singularity condition. In higher dimensions, selection along transport rays is studied by Aryan and Ghosal \cite{aryan2025entropic} and more recently by Nutz and Zhong \cite{nutz2026monge}, who establish a variational characterization of the selected plan as a constrained relative-entropy minimizer with geometry-dependent reference measures on transport rays.

\paragraph{Main contributions and strategy.}

The main purpose of this paper is to derive an entropic selection principle, through a \(\Gamma\)-convergence approach, when the ground cost \(c\) is non-degenerate but not necessarily twisted. In this situation, the set of admissible destinations of a typical source \(x\) is a locally finite set but need not be a singleton. We first consider the case of two marginals \((\mu_1,\mu_2)\) on the flat torus \(\T^d\), to avoid boundary issues, then explain how to handle the case of \(\R^d\). We assume that the contact set \(\Sigma\), defined as the zero set of the duality gap energy \(E \defeq c-\overline\varphi_1\oplus\overline\varphi_2\), \((\overline\varphi_1,\overline\varphi_2)\) being a pair of Kantorovich potentials, is a \(d\)-dimensional \(C^{1,1}\) submanifold. This is a strong assumption: under non-degeneracy alone, McCann, Pass and Warren \cite{mccann2012rectifiability} only guarantee that \(\Sigma\) is contained in a \(d\)-dimensional locally Lipschitz submanifold. We impose it to keep technicalities under control, since the construction of a recovery sequence is already delicate in the \(C^{1,1}\) setting, leaving the general case for future work.

Under our assumptions, the normal Hessian \(\hess^\perp E\) of \(E\), defined on \(\Sigma\) as the Hessian of \(E\) restricted to the normal space of \(\Sigma\), is positive definite. Roughly speaking, the Gibbs measure \(\gibbs = Z_\eps^ {-1} e^{-E/\varepsilon} \mu_1\otimes \mu_2\) behaves like a Gaussian with covariance matrix \(\eps (\hess^\perp E)^{-1}\) in the normal fibers to the manifold, and concentrates on \(\Sigma\) as \(\varepsilon \to 0\). We extend the results of Hwang \cite{hwang1980laplace}, who proves convergence towards a probability measure \(\mathbb P_0\) with an explicit density with respect to the volume measure on \(\Sigma\), from \(C^2\) to \(C^{1,1}\) regularity, and to compactly supported marginals on \(\R^d\), in which case \(\Sigma\) need not be compact. We then prove that the relative entropy with respect to the concentrating Gibbs probabilities, which coincides with the entropy-regularized functional up to an additive constant and a factor \(\varepsilon\) (see \Cref{prop: EOT-Gibbs}), \(\Gamma\)-converges under the original marginal constraints to the relative entropy with respect to this limiting probability. Since there always exists an optimal plan with finite limiting entropy, the limit problem has a unique minimizer, which is the selected optimal plan, and the entropic plans converge to it.

The principal difficulty lies in the construction of a recovery sequence for the \(\Gamma\)-\(\limsup\), which must approximate the relevant Gaussian profile in the normal fibers while satisfying the marginal constraints exactly. We first perturb an arbitrary admissible plan with finite limiting entropy by a varying Gaussian kernel in the normal fibers, so as to obtain an approximate recovery sequence that converges in energy but only satisfies the marginal constraints in the limit. The most delicate step is then to estimate how much these constraints are violated along the sequence. Roughly speaking, the centering of the normal kernels makes the first-order contributions cancel, so that the squared Wasserstein distance between the marginals of the perturbed plans and the prescribed ones is \(o(\varepsilon)\), although the normal variance is of order \(\varepsilon\). Thanks to this estimate, the shadow construction of \cite{eckstein2022quantitative} finally allows us to correct the marginals while preserving convergence in energy.

To state the resulting \(\Gamma\)-expansion, in the sense of Braides and Truskinovsky \cite{braides2008asymptotic}, we consider the functionals defined on \(\mathcal P(\Omega)\), where \(\Omega\defeq\T^d\times\T^d\), by
\begin{align*}
\mathcal F_\varepsilon(\gamma)&\defeq\int_{\Omega}c\dd \gamma+\varepsilon\entropy(\gamma\,|\,\mu_1\otimes\mu_2)+\chi_{\Pi(\mu_1,\mu_2)}(\gamma)
\qquad(\varepsilon>0),
\\
\mathcal F^{(0)}(\gamma) &\defeq\int_{\Omega}c\dd \gamma+\chi_{\Pi(\mu_1,\mu_2)}(\gamma),
\end{align*}
where $\Pi(\mu_1,\mu_2)$ denotes the set of couplings $\gamma \in \mathcal P(\Omega)$ having $\mu_1$ and $\mu_2$ as marginals, and $\chi$ denotes the indicator function in the sense of convex analysis. Writing \(\mu_i=\rho_i\dd x\), we also define the first-order functional
\[
\mathcal F^{(1)}(\gamma)\defeq\entropy(\gamma\,|\,\mathbb P_0)+\chi_{\Pi(\mu_1,\mu_2)}(\gamma)-\log Z_0-\frac{d}{2}\log(2\pi),
\]
where \(\mathbb P_0\) is the probability measure on \(\Sigma\) given by
\[
\dd\mathbb P_0(x_1,x_2)\defeq\frac{1}{Z_0}\frac{\rho_1(x_1)\rho_2(x_2)}{\sqrt{\det\hess^\perp E(x_1,x_2)}}\dd\mathcal H^d_{|\Sigma}(x_1,x_2),
\]
where \(\mathcal H^d_{|\Sigma}\) denotes the \(d\)-dimensional Hausdorff measure restricted to \(\Sigma\) and \(Z_0\) is the normalizing constant.

\begin{theoremA}[\(\Gamma\)-expansion and entropic selection]
\label{thm: A}
Assume that \ref{H1}--\ref{H4} of \Cref{subsec: assumptions} hold. In particular, \(M_1=M_2=\T^d\), \(c\) is non-degenerate near \(\Sigma\), and \(\Sigma\) is a \(d\)-dimensional \(C^{1,1}\) submanifold. Then the following holds.
\begin{enumerate}[(i)]
\item (\(\Gamma\)-expansion) The functionals \(\mathcal F_\varepsilon\) admit the first-order \(\Gamma\)-expansion
\begin{equation}
\label{eq: gamma-expansion}
\boxed{
 \mathcal F_\varepsilon - \frac{d}{2}\varepsilon\log(1/\varepsilon)
 \stackrel{\Gamma}{=}
 \mathcal F^{(0)}+\varepsilon\mathcal F^{(1)}+o(\varepsilon),
}
\end{equation}
which means that \((\mathcal F_\varepsilon-\min\mathcal F^{(0)})/\varepsilon-\frac{d}{2}\log(1/\varepsilon)\) \(\Gamma\)-converges to \(\mathcal F^{(1)}\) for the weak convergence of probability measures.
\end{enumerate}
As a consequence:
\begin{enumerate}[(i),resume]
\item (Asymptotics of the cost) The minimum of \(\mathcal F^{(1)}\) is finite, and
\begin{equation}
    \label{eq: energy-expansion}
    \boxed{
    \min\mathcal{F}_\varepsilon=\min \mathcal F^{(0)}+\dfrac{d}{2}\varepsilon\log(1/\varepsilon)+\varepsilon \min\mathcal F^{(1)}+o(\varepsilon).
    }
\end{equation}
\item (Entropic selection) The unique minimizer \(\gamma_\varepsilon\) of \(\mathcal F_\varepsilon\) converges weakly, as \(\varepsilon\to0\), to the unique minimizer \(\gamma_0\) of \(\entropy(\cdot\,|\,\mathbb P_0)\) among all optimal transport plans, which is also the unique minimizer of \(\mathcal F^{(1)}\).
\end{enumerate}
\end{theoremA}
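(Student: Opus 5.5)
The plan is to reduce everything to a statement about relative entropies with respect to Gibbs measures, as announced in the strategy paragraph. First, \Cref{prop: EOT-Gibbs} gives, for \(\gamma\in\Pi(\mu_1,\mu_2)\),
\[
\mathcal F_\varepsilon(\gamma)=\min\mathcal F^{(0)}+\varepsilon\entropy(\gamma\,|\,\gibbs)-\varepsilon\log Z_\varepsilon .
\]
This uses that \(\int(\overline\varphi_1\oplus\overline\varphi_2)\dd\gamma=\min\mathcal F^{(0)}\) for admissible \(\gamma\), by Kantorovich duality, and that \(\int c\dd\gamma-\min\mathcal F^{(0)}=\int E\dd\gamma\). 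Here \(Z_\varepsilon=\int e^{-E/\varepsilon}\dd(\mu_1\otimes\mu_2)\).

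Second, the extended Laplace method in the \(C^{1,1}\) setting, which the paper establishes in the Hwang-type result, will yield \(\gibbs\rightharpoonup\mathbb P_0\) together with the sharp asymptotics
\[
Z_\varepsilon=(2\pi\varepsilon)^{d/2}Z_0(1+o(1)).
\]
This follows by integrating the Gaussian \(e^{-\langle\hess^\perp E\,v,v\rangle/(2\varepsilon)}\) over the \(d\)-dimensional normal fibers, where \(\Sigma\) has codimension \(d\) in \(\Omega\). Consequently,
\[
\frac{\mathcal F_\varepsilon-\min\mathcal F^{(0)}}{\varepsilon}-\frac d2\log(1/\varepsilon)=\entropy(\cdot\,|\,\gibbs)+\chi_{\Pi}-\log Z_0-\frac d2\log(2\pi)+o(1),
\]
so (i) is equivalent to the \(\Gamma\)-convergence of \(G_\varepsilon\defeq\entropy(\cdot\,|\,\gibbs)+\chi_{\Pi(\mu_1,\mu_2)}\) to \(G_0\defeq\entropy(\cdot\,|\,\mathbb P_0)+\chi_{\Pi(\mu_1,\mu_2)}\).

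The \(\Gamma\)-\(\liminf\) is the easy half. The relative entropy is jointly weakly lower semicontinuous in \((\gamma,\mathbb P)\), by the Donsker--Varadhan dual formula as a supremum of continuous affine functionals. In addition, \(\Pi(\mu_1,\mu_2)\) is weakly closed. Together these give \(\liminf G_\varepsilon(\gamma_\varepsilon)\ge G_0(\gamma)\) whenever \(\gamma_\varepsilon\rightharpoonup\gamma\).

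The \(\Gamma\)-\(\limsup\) is the main obstacle. Let \(\gamma\in\Pi(\mu_1,\mu_2)\) with \(\gamma=f\,\mathbb P_0\) and \(\entropy(\gamma|\mathbb P_0)<\infty\). By a density argument, truncating and regularising \(f\) while staying in \(\Pi\) up to a small mixing with \(\gamma_0\) and using convexity, I can assume \(f\) is bounded and Lipschitz on \(\Sigma\).
\begin{enumerate}
\item Define \(\tilde\gamma_\varepsilon\) by pushing \(\gamma\) off \(\Sigma\) along the normal fibers. I would use a tubular-neighbourhood map \((z,v)\mapsto z+v\), which is bi-Lipschitz for \(C^{1,1}\) \(\Sigma\), and place at each \(z\) the centred Gaussian with covariance \(\varepsilon(\hess^\perp E(z))^{-1}\).
\item A direct computation, using \(E(z+v)=\tfrac12\langle\hess^\perp E(z)v,v\rangle+O(|v|^3)\) (only \(C^{1,1}\) is available, so \(O(|v|^{2})\)-Lipschitz control of the Hessian suffices after integrating against Gaussians), shows \(\entropy(\tilde\gamma_\varepsilon|\gibbs)\to\entropy(\gamma|\mathbb P_0)\).
\item The key estimate is on the marginals of \(\tilde\gamma_\varepsilon\). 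They are \(\mu_i\) displaced by centred Gaussian noise of size \(\sqrt\varepsilon\), plus a drift of order \(\varepsilon\) from the curvature and the Jacobian. Naively this gives \(W_2^2(\proj_{i\#}\tilde\gamma_\varepsilon,\mu_i)=O(\varepsilon)\), which is too large. I would therefore couple fiberwise through a conditional-centering argument to obtain \(o(\varepsilon)\). Concretely, one compares \(\proj_{i\#}\tilde\gamma_\varepsilon\) with \(\mu_i\) in negative Sobolev norm \(\dot H^{-1}\). The first-order term is the divergence of a mean-zero displacement field and vanishes by centering. The second-order term, after testing against smooth functions, is \(\varepsilon\) times a divergence that must also cancel by the marginal constraint on \(\gamma\). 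Since \(\rho_i\) is bounded below on \(\T^d\), \(W_2\lesssim\|\cdot\|_{\dot H^{-1}}\) then gives \(o(\varepsilon)\). This is the step I expect to require the most care.
\end{enumerate}

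Finally, I apply the shadow construction of \cite{eckstein2022quantitative} to correct \(\tilde\gamma_\varepsilon\) into \(\gamma_\varepsilon\in\Pi(\mu_1,\mu_2)\). The entropy relative to \(\mu_1\otimes\mu_2\) does not increase, and the cost changes by \(O(W_2)\)-type terms. Since \(E\) is \(C^{1,1}\) and vanishes to second order on \(\Sigma\), the change in \(\int E/\varepsilon\) is \(O(W_2\sqrt\varepsilon+W_2^2)/\varepsilon=o(1)\). Hence \(\limsup G_\varepsilon(\gamma_\varepsilon)\le G_0(\gamma)\).

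For (ii) and (iii), equicoercivity is automatic because \(\mathcal P(\Omega)\) is weakly compact. Moreover, \(G_0\) is finite at some optimal plan, as the paper asserts, e.g.\ built from \(\gamma_0\)-type measures with bounded density on \(\Sigma\) via the non-degeneracy, which makes \(\min\mathcal F^{(1)}\) finite. Fundamental \(\Gamma\)-convergence then gives convergence of minima, which is \eqref{eq: energy-expansion}, and convergence of minimizers. The support of any plan with finite \(G_0\) lies in \(\Sigma\), so such plans are optimal, and the strict convexity of \(\entropy(\cdot|\mathbb P_0)\) gives a unique minimizer \(\gamma_0\). It follows that the whole family \(\gamma_\varepsilon\) converges to \(\gamma_0\).
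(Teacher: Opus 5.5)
Your reduction ``I can assume $f$ is bounded and Lipschitz on $\Sigma$'' is a genuine gap, and your marginal estimate depends on it.

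Boundedness is in fact free. For any $\gamma\in\Pi(\mu_1,\mu_2)$ concentrated on $\Sigma$, the $1$-Lipschitz projection gives $\gamma(A)\le\mu_1(\proj_{M_1}(A))\le\beta\,\mathcal H^d(A)$. Since the density of $\mathbb P_0$ with respect to $\mathcal H^d_{|\Sigma}$ is bounded below, $\dd\gamma/\dd\mathbb P_0$ is bounded. This is also the argument you need, and only gesture at, for the finiteness of $\min\mathcal F^{(1)}$.

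Lipschitz regularity is another matter. Mollifying $f$ along $\Sigma$ breaks both marginal constraints, and convex mixing with another optimal plan creates no regularity. The reduction can even be false. On $\T^1$, take $c(x,y)=\sin^2(\pi(y-x))\sin^2(\pi(y-x-\theta))$ with $\theta\notin\mathbb Z$, and $\rho_1\equiv1$. Then $\Sigma$ is the union of the graphs of $\id$ and of $R_\theta(x)=x+\theta$, and \ref{H1}--\ref{H4} hold. Every coupling carried by $\Sigma$ has the form $(\id,\id)_\#(g\,\dd x)+(\id,R_\theta)_\#((1-g)\,\dd x)$ with $g-g(\cdot-\theta)=\rho_2-1$. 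By ergodicity, all solutions differ by constants. For a Liouville $\theta$, a lacunary construction gives a smooth $\rho_2$ close to $1$ whose solutions are all continuous but not Lipschitz.

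Your $\dot H^{-1}$ expansion uses exactly this regularity to move derivatives from the test function onto the density. Moreover, the second-order term does \emph{not} cancel by the marginal constraint. Already for flat $\Sigma$, $\mu_i*\eta_\varepsilon-\mu_i\approx\frac\varepsilon2\operatorname{div}(Q\nabla\rho_i)\neq0$, with $Q$ the projected covariance. You do not need this cancellation anyway: what you need is $\|\widetilde\mu_{i,\varepsilon}-\mu_i\|_{\dot H^{-1}}=o(\sqrt\varepsilon)$, that is, $W_2^2=o(\varepsilon)$.

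The missing idea, which is how the paper proceeds, is to approximate not the coupling but the chart densities $\rho_j$ of $\overline\gamma$ on the branches of $\Sigma$ over $M_i$. The approximants are subject to no constraint. Interpolate by letting the normal variance grow from $0$ to $\varepsilon$. This gives $\partial_t\mu_{t,\varepsilon}+\operatorname{div}\flow=0$ with a flux that is linear in the $\rho_j$.

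The centering $\int sK(z,s)\dd s=0$ lets you subtract the $t=0$ flux branch by branch. Then $\flow/\sqrt\varepsilon$ is a sum of terms $\int_{\ball_R}\bigl((\zeta^j_{s,t})_\#-\id\bigr)\bigl(\rho_jK_j(\cdot,s)A_js\bigr)\dd s$, where $\zeta^j_{s,t}(x)=x+t\sqrt\varepsilon A_j(x)s$ is the projected normal displacement. Each term is $O(\|\rho_j\|_{L^1})$ in total variation in general, and $O(\sqrt\varepsilon)$ when $\rho_j\in C^1_c$. Hence it is $o(1)$ by density of $C^1_c$ in $L^1$.

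The truncated kernel $K_R$ bounds the velocity by $\sqrt\varepsilon R$. Benamou--Brenier then gives $W_2^2\le\sqrt\varepsilon R\sup_t\|\flow\|_{\mathrm{TV}}=o_R(\varepsilon)$, and $R\to\infty$ is handled by a diagonal argument.

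Your $\dot H^{-1}$ route can be repaired in the same way. Indeed $\|\widetilde\mu_{i,\varepsilon}-\mu_i\|_{\dot H^{-1}}\le\sup_t\|\flow\|_{L^2}$, the chart densities are bounded, and $C^1_c$ is dense in $L^2$. The approximation must, however, be carried out inside this linear flux, not on the plan. You also need to truncate the Gaussian, since the tubular map is only injective for $|v|<r_0$.
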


We then treat compactly supported marginals on \(\R^d\) and illustrate the selection principle for the non-twisted cost of \cite[Example~3.1]{mccann2012rectifiability}. The extension to \(N\) marginals is stated separately under an explicit normal-rank condition. Finally, we apply it to a repulsive harmonic problem with three marginals on the circle.

\paragraph{Organization of the paper.} 

\Cref{sec: framework} introduces the transport problems, our assumptions, and the geometry of the contact set. \Cref{sec: gibbs} deals with the contact set and its tubular neighborhood, and states the geometric Laplace method. \Cref{sec: selection} constructs the recovery sequence, proves the constrained \(\Gamma\)-convergence, and derives the \(\Gamma\)-expansion and its consequences. \Cref{sec: extensions} gives the proof of the geometric Laplace method in the case of the flat torus and the Euclidean space, and provides the multi-marginal extension of our results, together with some examples. 

\section{Entropic Optimal Transport, assumptions and geometry of the contact set}
\label{sec: framework}

\subsection{Preliminaries on Entropic Optimal Transport}
We consider two compact metric spaces \(M_1\) and \(M_2\), endowed with probability measures \(\mu_1\) and \(\mu_2\), and a continuous cost function \(c:\Omega\to\R\), where \(\Omega\defeq M_1\times M_2\). Typical examples are closed Riemannian manifolds, such as the flat torus of \Cref{subsec: assumptions}, or compact subsets of \(\R^d\), as in \Cref{subsec: euclidean-extension}. The Monge--Kantorovich optimal transport problem reads
\begin{equation}
\label{eq: OT}
\inf_\gamma
\left\{
\int_{\Omega} c(\x)\dd\gamma(\x)
\:
\Big|
\:
\gamma \in \Pi(\mu_1,\mu_2)
\right\},
\tag{\(\mathrm{OT}\)}
\end{equation}
where
\[
\Pi(\mu_1,\mu_2)
\defeq
\left\{
\gamma \in \mathcal{P}(\Omega)
\:
\Big|
\:
(\proj_{M_i})_\#\gamma=\mu_i
\text{ for every }i=1,2
\right\}
\]
denotes the set of probability measures on \(\Omega\) having \(\mu_1\) and \(\mu_2\) as marginals.

It is well known that the optimal transport problem admits a dual formulation, for which strong duality holds and maximizers exist as soon as \(M_1\), \(M_2\) are compact and \(c\) is continuous (see \cite[Chapter~1]{santambrogioOptimalTransportApplied2015}). The dual problem is given by
\begin{equation}
\label{eq: dual}
\sup_{(\varphi_1,\varphi_2)}
\left\{
\int_{M_1} \varphi_1\dd\mu_1+\int_{M_2} \varphi_2\dd\mu_2
\;
\Big|
\;
\varphi_1\oplus\varphi_2 \leq c,\ \varphi_i \in C(M_i) \text{ for } i=1,2
\right\},
\tag{\(\mathrm{Dual}\)}
\end{equation}
where \((\varphi_1\oplus\varphi_2)(x_1,x_2)\defeq\varphi_1(x_1)+\varphi_2(x_2)\). Any pair \((\overline\varphi_1,\overline\varphi_2)\) attaining the supremum in \eqref{eq: dual} is called a pair of Kantorovich potentials. Given such a pair, we define\footnote{$E$ may depend on the choice of Kantorovich potentials, but the $\Gamma$-expansion result actually will not.} the duality gap energy \(E:\Omega\to\R_+\) and the contact set respectively by
\[
E
\defeq
c-\overline\varphi_1\oplus\overline\varphi_2 \qquad\text{and} \qquad \Sigma \defeq \{\x \in \Omega : E(\x) = 0\}.
\]
Since \(E\) is continuous and nonnegative, strong duality yields the following characterization of optimality:
\begin{equation}
\label{eq: optimality-support}
\overline \gamma \in \Pi(\mu_1,\mu_2) \text{ is a solution of \eqref{eq: OT} } \iff \spt(\overline \gamma) \subseteq \Sigma.
\end{equation}

For any \(\varepsilon>0\), the entropy-regularized two-marginal optimal transport problem is defined by
\begin{equation}
\label{eq: EOT}
\inf_{\gamma}
\left\{
\int_{\Omega} c(\x)\dd\gamma(\x)
+
\varepsilon
\entropy(
\gamma
\,|\,
\mu_1\otimes\mu_2)
\:
\Big|
\:
\gamma\in\Pi(\mu_1,\mu_2)
\right\},
\tag{\(\mathrm{OT}_\varepsilon\)}
\end{equation}
where, for \(\gamma\in\mathcal P(\Omega)\) and a probability measure \(\nu\) on \(\Omega\), the relative entropy is given by
\[
\entropy(
\gamma
\,|\,
\nu)
\defeq
\begin{cases}
\displaystyle
\int_{\Omega}
\log\left(
\frac{\dd\gamma}
{\dd\nu}(\x)
\right)
\dd\gamma(\x)
&
\text{if }\displaystyle
\gamma\ll\nu,
\\[1.2em]
+\infty
&
\text{otherwise}.
\end{cases}
\]
We use the same notation when \(\nu\) is a nonnegative Borel measure which is not a probability (e.g.\ a Hausdorff measure), whenever the integral is well defined.

We start by making the following observation.

\begin{proposition}
\label{prop: EOT-Gibbs}
For \(\varepsilon>0\), let \(\gibbs\) be the Gibbs probability measure defined by
\[
\gibbs
\defeq
\frac{1}{Z_\varepsilon}
\exp\left(-\frac{E}{\varepsilon}\right)
\mu_1\otimes\mu_2,
\qquad
Z_\varepsilon
\defeq
\int_{\Omega}
\exp\left(-\frac{E(\x)}{\varepsilon}\right)
\dd(\mu_1\otimes\mu_2)(\x).
\]
Then, for every \(\gamma\in\Pi(\mu_1,\mu_2)\),
\begin{equation}
\label{eq: EOT-Gibbs identity}
\int_{\Omega} c\dd\gamma
+
\varepsilon\entropy(\gamma\,|\,\mu_1\otimes\mu_2)
=
\OT(\mu_1,\mu_2)
+
\varepsilon\entropy(\gamma\,|\,\gibbs)
-
\varepsilon\log Z_\varepsilon,
\end{equation}
where \(\OT(\mu_1,\mu_2)\) denotes the value of \eqref{eq: OT}. In particular, problem \eqref{eq: EOT} has the same minimizers as
\begin{equation}
\label{eq: EOT_2}
\inf_{\gamma}
\left\{
\entropy\left(
\gamma
\,\big|\,
\gibbs
\right)
\:
\Big|
\:
\gamma\in\Pi(\mu_1,\mu_2)
\right\},
\tag{\(\mathrm{EOT}_\varepsilon\)}
\end{equation}
\end{proposition}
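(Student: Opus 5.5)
The plan is a direct computation based on the density relation between \(\gibbs\) and \(\mu_1\otimes\mu_2\), followed by the observation that the marginal constraints make the potential terms constant. First, since \(E\) is continuous and nonnegative on the compact set \(\Omega\), the density \(e^{-E/\varepsilon}\) is bounded and bounded below by a positive constant. Hence \(0<Z_\varepsilon\le 1\), \(\gibbs\) is a well-defined probability measure, and \(\gibbs\) and \(\mu_1\otimes\mu_2\) are mutually absolutely continuous with
\[
\frac{\dd\gibbs}{\dd(\mu_1\otimes\mu_2)}=\frac{1}{Z_\varepsilon}e^{-E/\varepsilon}.
\]
In particular, \(\gamma\ll\mu_1\otimes\mu_2\) if and only if \(\gamma\ll\gibbs\). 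Consequently both sides of \eqref{eq: EOT-Gibbs identity} are \(+\infty\) simultaneously when \(\gamma\) is not absolutely continuous. This settles that case, since \(\int c\dd\gamma\) is finite for continuous \(c\) on a compact set.

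When \(\gamma\ll\mu_1\otimes\mu_2\), I will apply the chain rule for densities:
\[
\log\frac{\dd\gamma}{\dd(\mu_1\otimes\mu_2)}=\log\frac{\dd\gamma}{\dd\gibbs}-\frac{E}{\varepsilon}-\log Z_\varepsilon
\]
holds \(\gamma\)-a.e. The extra terms are bounded functions, so integrating against \(\gamma\) is legitimate, including when the entropy is \(+\infty\). This gives
\[
\varepsilon\entropy(\gamma\,|\,\mu_1\otimes\mu_2)=\varepsilon\entropy(\gamma\,|\,\gibbs)-\int E\dd\gamma-\varepsilon\log Z_\varepsilon.
\]
Substituting \(E=c-\overline\varphi_1\oplus\overline\varphi_2\), the terms \(\int c\dd\gamma\) cancel and the left-hand side of \eqref{eq: EOT-Gibbs identity} becomes
\[
\int\overline\varphi_1\oplus\overline\varphi_2\dd\gamma+\varepsilon\entropy(\gamma\,|\,\gibbs)-\varepsilon\log Z_\varepsilon.
\]

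The key step is to use \(\gamma\in\Pi(\mu_1,\mu_2)\). The potentials \(\overline\varphi_i\) are continuous, hence integrable, so the marginal constraints give
\[
\int\overline\varphi_1\oplus\overline\varphi_2\dd\gamma=\int\overline\varphi_1\dd\mu_1+\int\overline\varphi_2\dd\mu_2.
\]
By optimality of \((\overline\varphi_1,\overline\varphi_2)\) in \eqref{eq: dual} and strong duality, this quantity equals \(\OT(\mu_1,\mu_2)\), which proves \eqref{eq: EOT-Gibbs identity}.

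Finally, the quantities \(\OT(\mu_1,\mu_2)\) and \(\varepsilon\log Z_\varepsilon\) do not depend on \(\gamma\), and \(\varepsilon>0\). So on \(\Pi(\mu_1,\mu_2)\) the objective of \eqref{eq: EOT} is an increasing affine function of \(\entropy(\gamma\,|\,\gibbs)\), and the two problems have the same minimizers. There is no real obstacle in this argument. The only point needing care is the bookkeeping of infinite values and integrability, which is handled by the boundedness of \(E\), \(c\) and \(\overline\varphi_i\).
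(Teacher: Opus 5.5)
Your proof is correct and follows essentially the same route as the paper: strong duality together with the marginal constraints gives $\int_\Omega c\dd\gamma=\OT(\mu_1,\mu_2)+\int_\Omega E\dd\gamma$, and the bounded log-density $-E/\varepsilon-\log Z_\varepsilon$ of $\gibbs$ with respect to $\mu_1\otimes\mu_2$ gives the entropy identity. The only differences are that you perform these two steps in the opposite order, and that you treat the infinite-entropy case explicitly, where the paper just notes that both sides may be infinite.
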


\begin{proof}
Let \(\gamma\in\Pi(\mu_1,\mu_2)\). Since \(\gamma\) has marginals \(\mu_1\) and \(\mu_2\), strong duality gives \(\int_\Omega\overline\varphi_1\oplus\overline\varphi_2\dd\gamma=\OT(\mu_1,\mu_2)\), hence \(\int_\Omega c\dd\gamma=\OT(\mu_1,\mu_2)+\int_\Omega E\dd\gamma\). Moreover, since \(E\) is bounded, the measures \(\gibbs\) and \(\mu_1\otimes\mu_2\) are equivalent, with bounded log-density \(\log\frac{\dd\gibbs}{\dd(\mu_1\otimes\mu_2)}=-\frac{E}{\varepsilon}-\log Z_\varepsilon\), so that
\[
\varepsilon\entropy(\gamma\,|\,\mu_1\otimes\mu_2)
=
\varepsilon\entropy(\gamma\,|\,\gibbs)
-\int_\Omega E\dd\gamma
-\varepsilon\log Z_\varepsilon,
\]
both sides being possibly infinite. Summing the two identities gives \eqref{eq: EOT-Gibbs identity}.
\end{proof}

\subsection{Main assumptions and geometry of the contact set}
\label{subsec: assumptions}
We first consider two marginals on flat tori.  A Euclidean extension
is given in \Cref{subsec: euclidean-extension}.

\begin{enumerate}[(H1)]
\item\label{H1} For \(i=1,2\), \(M_i=\mathbb{T}^d\), with the flat product metric. The cost
\(c\) is continuous, and we fix optimal continuous Kantorovich
potentials \(\overline\varphi_1,\overline\varphi_2\).
The cost is \(C^2\)
and satisfies the non-degeneracy (or infinitesimal twist) condition on an open neighborhood
\(U\subset\Omega\) of the contact set \(\Sigma\):
\[
 D^2_{x_1x_2}c(z):T_{x_2}M_2\longrightarrow T_{x_1}M_1
 \quad\hbox{is invertible for every }z=(x_1,x_2)\in U.\]

\item\label{H2} The duality gap
\(E(x_1,x_2)=c(x_1,x_2)-\overline\varphi_1(x_1)-\overline\varphi_2(x_2)\geq0\)
belongs to \(C^{2,1}(\Omega)\).

\item\label{H3} The contact set \(\Sigma=\{E=0\}\) is a nonempty
\(C^{1,1}\) submanifold without boundary, of
dimension \(d\). We denote by \(k\defeq d\geq1\) its codimension in \(\Omega\).

\item\label{H4} The marginals have densities
\(\mu_i=\rho_i\,\dd\vol_{M_i}\), with \(\rho_i\) Lipschitz and
\(0<\alpha\leq\rho_i\leq\beta<\infty\) on \(M_i\), for \(i=1,2\).
\end{enumerate}

\begin{remark}[About assumption \ref{H3}]
\label{rem: about-H3}
Under \ref{H1}--\ref{H2}, the implicit function theorem shows that \(S\defeq\{z\in U:\nabla_{x_1}E(z)=0\}\) is a \(d\)-dimensional \(C^{1,1}\) submanifold, since \(D_{x_2}(\nabla_{x_1}E)=D^2_{x_1x_2}c\) is invertible. As \(\Sigma\subseteq S\), \ref{H3} amounts to \(\Sigma\) being relatively open in \(S\) (compare with \cite{mccann2012rectifiability}, where \(\Sigma\) is shown to lie in a Lipschitz submanifold). Moreover, \ref{H3} implies that \(\Sigma=\{\nabla E=0\}\) in a neighborhood of \(\Sigma\), by the positive definiteness of the normal Hessian (see \Cref{prop: twist-geometry}). Requiring this equality globally would however not be natural: \(\{\nabla E=0\}\) contains the maximum points of \(E\) and need not be a \(d\)-dimensional submanifold. For the bilinear cost \(c(x_1,x_2)=-\langle x_1,x_2\rangle\) in the Euclidean setting, \(E(x_1,x_2)=u(x_1)+u^*(x_2)-\langle x_1,x_2\rangle\) with \(u\) convex is convex in each variable, so that \(S=\Sigma\) and \ref{H2} implies \ref{H3}.
\end{remark}

We set \(\rho(x_1,x_2)=\rho_1(x_1)\rho_2(x_2)\) and
\(\mu^\otimes=\mu_1\otimes\mu_2=\rho\,\dd\vol_\Omega\). For \(z\in\Sigma\), the normal Hessian is
\[
 \Hn{z}\defeq\hess^\perp E(z)
 =\hess E(z)|_{N_z\Sigma},\qquad N_z\Sigma=(T_z\Sigma)^\perp.
\]

\begin{proposition}
\label{prop: twist-geometry}
Under \ref{H1}--\ref{H3},
\[
 \ker\hess E(z)=T_z\Sigma\qquad(z\in\Sigma),
\]
and both coordinate projections \(\proj^\Sigma_{M_i}:\Sigma\to M_i\)
are local \(C^{1,1}\) diffeomorphisms. Moreover, there exist constants
\(0<\lambda\leq\Lambda<\infty\) such that
\begin{equation}
\label{eq: normal-ellipticity}
 \lambda|v|^2\leq \Hn{z}(v,v)\leq\Lambda|v|^2
 \qquad(z\in\Sigma,\ v\in N_z\Sigma).
\end{equation}
\end{proposition}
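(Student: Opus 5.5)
We first show that \(T_z\Sigma\subseteq\ker\hess E(z)\) for \(z\in\Sigma\), then that \(\ker\hess E(z)\cap N_z\Sigma=\{0\}\) using non-degeneracy; the projection and ellipticity claims then follow.

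\emph{Step 1: tangent directions lie in the kernel.} Since \(E\geq0\) and \(E=0\) on \(\Sigma\), every \(z\in\Sigma\) is a minimum point, so \(\nabla E=0\) on \(\Sigma\) and \(\hess E(z)\) is positive semidefinite. Take a \(C^{1,1}\) curve \(\sigma\) in \(\Sigma\) with \(\sigma(0)=z\), \(\sigma'(0)=v\). Differentiating \(\nabla E(\sigma(t))=0\) at \(t=0\) is legitimate because \(\nabla E\in C^{1,1}\) by \ref{H2}, and it gives \(\hess E(z)v=0\). Hence \(T_z\Sigma\subseteq\ker\hess E(z)\).

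\emph{Step 2: the kernel has dimension at most \(d\).} Write \(\hess E(z)\) in block form with respect to \(T_{x_1}M_1\oplus T_{x_2}M_2\). Since \(\overline\varphi_1\oplus\overline\varphi_2\) has no mixed derivatives, the off-diagonal block is \(D^2_{x_1x_2}c(z)\), which is invertible by \ref{H1}. Let \(w=(w_1,w_2)\in\ker\hess E(z)\). The first block row reads \(A w_1+Bw_2=0\) with \(B=D^2_{x_1x_2}c\) invertible, so \(w_2=-B^{-1}Aw_1\) and \(w\) is determined by \(w_1\). Therefore \(\dim\ker\le d=\dim T_z\Sigma\), and with Step 1 we get \(\ker\hess E(z)=T_z\Sigma\).

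The same argument shows that \(\ker\hess E(z)\cap(\{0\}\times T_{x_2}M_2)=\{0\}\). Indeed, if \(w_1=0\) then \(Bw_2=0\), so \(w_2=0\). Thus \(d\proj_{M_1}|_{T_z\Sigma}\) is injective, and by dimension count it is bijective; the symmetric argument handles \(M_2\). By \ref{H3}, \(\Sigma\) is a \(C^{1,1}\) submanifold, so the restricted projection is a \(C^{1,1}\) map with invertible differential. The inverse function theorem in the \(C^{1,1}\) category (the inverse of a \(C^{1,1}\) map with invertible derivative is \(C^{1,1}\) locally) then gives that both projections are local \(C^{1,1}\) diffeomorphisms.

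\emph{Step 3: uniform ellipticity.} Since \(\hess E(z)\) is positive semidefinite with kernel exactly \(T_z\Sigma\), its restriction \(\Hn{z}\) to \(N_z\Sigma\) is positive definite at every \(z\in\Sigma\). The upper bound \(\Lambda\) follows from \(E\in C^2\) on the compact \(\Omega\). For the lower bound, the smallest eigenvalue of \(\Hn{z}\) is continuous in \(z\), because \(\hess E\) is continuous and \(z\mapsto N_z\Sigma\) is continuous (Lipschitz) as \(\Sigma\) is \(C^{1,1}\). This eigenvalue is positive everywhere on \(\Sigma\), and \(\Sigma\), being closed as the zero set of a continuous function, is compact in the compact \(\Omega=\T^d\times\T^d\). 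Its minimum \(\lambda\) is therefore positive.

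The main obstacle is modest: justifying Step 1's differentiation under only \(C^{1,1}\) regularity of \(\Sigma\) (curves are \(C^1\) and \(\nabla E\) is \(C^1\), so the chain rule suffices), and ensuring in Step 2 that \(z\in U\), which holds since \(\Sigma\subseteq U\).
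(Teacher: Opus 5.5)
Your proposal is correct and follows essentially the same route as the paper. In both, differentiating \(\nabla E=0\) along \(\Sigma\) gives \(T_z\Sigma\subseteq\ker\hess E(z)\). The invertible mixed block \(D^2_{x_1x_2}E=D^2_{x_1x_2}c\) then forces \(\dim\ker\hess E(z)\le d\) and makes \(D\proj^\Sigma_{M_i}\) injective on \(T_z\Sigma\), and the uniform bounds come from positive definiteness on \(N_z\Sigma\) together with continuity and compactness of \(\Sigma\). The one point the paper makes explicit that you take for granted is that the mixed block equals \(D^2_{x_1x_2}c\) without assuming the potentials are twice differentiable; this holds because \(\overline\varphi_1\oplus\overline\varphi_2=c-E\) is \(C^2\) on \(U\), or directly by taking mixed second differences of a separable function.
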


\begin{proof}
Since \(E\geq0\) and \(E=0\) on \(\Sigma\), we have
\(\nabla E=0\) on \(\Sigma\), and \(\hess E(z)\) is positive
semidefinite there. Differentiating \(\nabla E=0\) along
\(\Sigma\) gives \(T_z\Sigma\subseteq\ker\hess E(z)\).
The separated form of the potentials implies
\(D^2_{x_1x_2}E=D^2_{x_1x_2}c\) near \(\Sigma\): this identity
also follows directly by taking mixed differences, without initially
assuming differentiability of the potentials.
Thus the mixed block of \(\hess E(z)\) has rank \(d\), so
\(\operatorname{rank}\hess E(z)\geq d\). The preceding kernel
inclusion and \(\dim T_z\Sigma=d\) give the reverse inequality.
Consequently, \(\ker\hess E(z)=T_z\Sigma\).

The differential of \(\proj^\Sigma_{M_1}\) at \(z\) is the restriction of \((u,v)\mapsto u\) to \(T_z\Sigma\), whose kernel consists of the vectors \((0,v)\in T_z\Sigma=\ker\hess E(z)\). For such a vector, the first component of \(\hess E(z)[(0,v)]=0\) reads \(D^2_{x_1x_2}c(z)v=0\), whence \(v=0\). Thus \(D\proj^\Sigma_{M_1}(z)\) is injective, hence an isomorphism since \(\dim T_z\Sigma=d=\dim M_1\), and the inverse function theorem shows that \(\proj^\Sigma_{M_1}\) is a local \(C^{1,1}\) diffeomorphism. The same argument applies to \(\proj^\Sigma_{M_2}\).
Finally, positive semidefiniteness and the kernel identity \(\ker\hess E(z)=T_z\Sigma\) make the restriction \(\Hn{z}\) of \(\hess E(z)\) to \(N_z\Sigma=(T_z\Sigma)^\perp\) positive definite. Continuity of the Hessian and the normal
spaces, together with compactness of \(\Sigma\), gives
\eqref{eq: normal-ellipticity}.
\end{proof}

\begin{remark}[Vertical and horizontal branches for the two-marginal problem]
    By \Cref{prop: twist-geometry}, both projections \(\proj^\Sigma_{M_1}\) and \(\proj^\Sigma_{M_2}\) are local diffeomorphisms. In particular, \(\Sigma\) has neither vertical nor horizontal branches, that is, points \(z\in\Sigma\) such that \(\rank(D\proj^\Sigma_{M_1}(z))<d\), respectively \(\rank(D\proj^\Sigma_{M_2}(z))<d\).
\end{remark}

\section{Convergence of the Gibbs measure}
\label{sec: gibbs}

\subsection{Tubular coordinates around the contact set}
\label{subsec:structure}

By \ref{H3}, \(\Sigma\) is a compact \(C^{1,1}\)-submanifold of \(\Omega\), and thus has positive reach in the sense of Federer \cite{federer1959curvature}.
Consequently, there exists \(r_0>0\) such that, setting, for every \(0<r\leq r_0\),
\[
N_r\Sigma
\defeq
\left\{
(z,v):z\in\Sigma,\;
v\in N_z\Sigma,\;
\|v\|<r
\right\},
\qquad
\Sigma_r\defeq\{\x\in\Omega:\dist(\x,\Sigma)<r\},
\]
the nearest-point projection \(\proj_\Sigma:\Sigma_{r_0}\to\Sigma\) is well defined and Lipschitz, and the map \((z,v)\mapsto z+v\) is a bi-Lipschitz bijection from \(N_r\Sigma\) onto \(\Sigma_r\), with inverse \(\x\mapsto(\proj_\Sigma(\x),\x-\proj_\Sigma(\x))\). Here the addition is
understood componentwise and modulo \(\mathbb{Z}^d\) on each torus
factor.

\begin{lemma}[Lipschitz normal frame]
\label{lemma: normal-frame}
Under \ref{H1}--\ref{H3}, there exists a Lipschitz map \(n=(n_1,\ldots,n_k):\Sigma\to\R^{2d\times k}\) such that, for every \(z\in\Sigma\), the columns \(n_1(z),\ldots,n_k(z)\) form an orthonormal basis of \(N_z\Sigma\).
\end{lemma}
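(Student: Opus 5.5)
Existence of a global Lipschitz orthonormal normal frame is not automatic, since the normal bundle of a manifold need not be trivial. Here, however, the structure of \(\Sigma\) from \Cref{prop: twist-geometry} gives an explicit trivialization. The plan is to write down a global, continuous (indeed Lipschitz) family of \(k=d\) linearly independent normal vectors, and then orthonormalize them by Gram--Schmidt.

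\textbf{Step 1: a Lipschitz tangent description.} Since \(\Omega=\T^d\times\T^d\) is flat, every tangent space is canonically identified with \(\R^{2d}=\R^d\times\R^d\). By \Cref{prop: twist-geometry}, \(\proj^\Sigma_{M_1}\) is a local \(C^{1,1}\) diffeomorphism. Hence, near each \(z\in\Sigma\), the manifold \(\Sigma\) is a graph \(x_2=T(x_1)\) with \(T\) of class \(C^{1,1}\), and
\[
T_z\Sigma=\{(u,A(z)u):u\in\R^d\},\qquad A(z)\defeq DT(x_1)\in\R^{d\times d}.
\]
Locally \(z\mapsto A(z)\) is Lipschitz. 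It is also globally well defined on \(\Sigma\), because it is intrinsically determined by \(T_z\Sigma\): it is the unique matrix whose graph is \(T_z\Sigma\). The local Lipschitz bounds patch together by compactness of \(\Sigma\) (finite cover plus a Lebesgue number, with the distance on \(\Sigma\) comparable to that of \(\Omega\) at small scales and a trivial bound at large scales by boundedness of \(A\)).

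\textbf{Step 2: explicit normal vectors.} For \(w\in\R^d\), the vector \((-A(z)^\top w,w)\) is orthogonal to every \((u,A(z)u)\). The map \(w\mapsto(-A(z)^\top w,w)\) is injective, so the columns of
\[
\tilde n(z)\defeq\begin{pmatrix}-A(z)^\top\\ I_d\end{pmatrix}\in\R^{2d\times d}
\]
span a \(d\)-dimensional subspace of \(N_z\Sigma\), hence all of it since \(k=d\). The map \(\tilde n\) is Lipschitz on \(\Sigma\). Its Gram matrix \(G(z)=I_d+A(z)A(z)^\top\succeq I_d\) is uniformly invertible, with entries bounded because \(A\) is bounded.

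\textbf{Step 3: orthonormalize.} Set \(n(z)\defeq\tilde n(z)G(z)^{-1/2}\). The matrix square root inverse is smooth, hence Lipschitz, on the compact set of symmetric matrices with spectrum in \([1,C]\). Therefore \(n\) is Lipschitz, its columns are orthonormal, and they span \(N_z\Sigma\). Gram--Schmidt would work equally well, since it is smooth on frames with Gram matrix bounded below.

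The only delicate point is Step 1: checking that \(A\) is globally well defined and globally Lipschitz, rather than merely locally so. I would handle this exactly as indicated, using the intrinsic characterization of \(A(z)\) by \(T_z\Sigma\) together with compactness.
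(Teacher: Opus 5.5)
Your proof is correct and follows essentially the same route as the paper. The paper also writes $T_z\Sigma$ as the graph of a Lipschitz matrix $B(z)$ over the first factor, spans $N_z\Sigma$ by the columns of $\begin{pmatrix}-B(z)^\top\\ I_d\end{pmatrix}$, and orthonormalizes by $(I_d+B(z)B(z)^\top)^{-1/2}$. Your extra care about the global well-definedness of $A(z)$ and the patching of local Lipschitz bounds only makes explicit what the paper compresses into ``by compactness.''
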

\begin{proof}
By \Cref{prop: twist-geometry}, the projection \(\proj^\Sigma_{M_1}\) is a local \(C^{1,1}\) diffeomorphism, so that, locally, \(\Sigma\) is the graph of a \(C^{1,1}\) map \(x_1\mapsto x_2\). Identifying the tangent spaces of the flat tori with \(\R^d\), this means that, for every \(z\in\Sigma\), \(T_z\Sigma=\{(u,B(z)u):u\in\R^d\}\) for some matrix \(B(z)\in\R^{d\times d}\), where \(z\mapsto B(z)\) is locally Lipschitz, hence Lipschitz on \(\Sigma\) by compactness. Hence \(N_z\Sigma=\{(-B(z)^\top w,w):w\in\R^d\}\), that is, the columns of
\[
N(z)\defeq\begin{pmatrix}
-B(z)^\top\\
I_d
\end{pmatrix}\in\R^{2d\times k}
\]
form a basis of \(N_z\Sigma\). Since their Gram matrix \(N(z)^\top N(z)=I_d+B(z)B(z)^\top\) is symmetric positive definite, we obtain an orthonormal basis of \(N_z\Sigma\) by setting
\[
n(z)
\defeq
N(z)\bigl(I_d+B(z)B(z)^\top\bigr)^{-1/2},
\qquad\text{so that}\qquad
n(z)^\top n(z)=I_k .
\]
The map \(n\) is Lipschitz, as \(B\) is.
\end{proof}
We fix such a frame \(n\). For every \(z\in\Sigma\) and \(s\in\R^k\), we thus have
\[
n(z)s
=
\sum_{p=1}^{k}s_pn_p(z)
\in N_z\Sigma.
\]
Through the isometry \(n(z):\R^k\to N_z\Sigma\), we write \(\Hn{z}(s,s)\) for \(\Hn{z}(n(z)s,n(z)s)\), so that \eqref{eq: normal-ellipticity} holds for every \(s\in\R^k\), and \(\det \Hn{z}\) for the determinant of the \(k\times k\) matrix with entries \(\Hn{z}(n_p(z),n_q(z))\), which does not depend on the choice of the orthonormal frame.
Denoting by \(\ball_r\) the open ball of radius \(r\) centered at \(0\) in \(\R^k\), we define the tubular parametrization
\[
\Phi(z,s)
\defeq
z+n(z)s,
\qquad
z\in\Sigma,\quad s\in\ball_{r_0},
\]
which, by the above, is a bi-Lipschitz bijection from \(\Sigma\times\ball_r\) onto \(\Sigma_r\) for every \(0<r\leq r_0\). Moreover, since \(z=\proj_\Sigma(\Phi(z,s))\), we have
\begin{equation}
\label{eq: normal-distance}
\dist\bigl(\Phi(z,s),\Sigma\bigr)=|s|
\qquad\text{for every }(z,s)\in\Sigma\times\ball_{r_0},
\end{equation}
which will be used repeatedly in the sequel. Since \(n\) is Lipschitz, it is differentiable along \(\Sigma\) at \(\mathcal H^d\)-a.e. point, by Rademacher's theorem. For such \(z\in\Sigma\) and
\(p\in\{1,\ldots,k\}\), we denote by
\[
G_p(z):T_z\Sigma\longrightarrow T_z\Sigma
\]
the linear operator determined by the tangential component of
\(Dn_p(z)\), namely
\[
\proj_{T_z\Sigma}\bigl(Dn_p(z)[X]\bigr)
=
G_p(z)X,
\qquad
X\in T_z\Sigma,
\]
where \(\proj_{T_z\Sigma}\) denotes the orthogonal projection of \(\R^{2d}\) onto \(T_z\Sigma\).
For \(s\in\mathbb R^k\), we set
\[
G(z,s)
\defeq
\sum_{p=1}^{k}s_pG_p(z).
\]

\begin{lemma}
\label{lemma: determinant of Phi}
Under \ref{H1}--\ref{H3}, for \(\mathcal H^d\)-a.e. \(z\in\Sigma\) and every \(s\in\ball_{r_0}\), the map \(\Phi\) is differentiable at \((z,s)\), and its Jacobian \(\mathcal J_\Phi(z,s)\), that is, the absolute value of the determinant of \(D\Phi(z,s):T_z\Sigma\times\R^k\to\R^{2d}\) in orthonormal bases, is
\[
\mathcal J_\Phi(z,s)
=
\Bigl|\det\bigl(\operatorname{Id}_{T_z\Sigma}+G(z,s)\bigr)\Bigr|.
\]
Moreover, there exists \(C>0\) such that
\[
\|G(z,s)\|\le C|s|
\]
for \(\mathcal H^d\)-a.e. \(z\in\Sigma\) and every \(s\in\R^k\).
\end{lemma}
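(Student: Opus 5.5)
We will compute \(D\Phi(z,s)\) directly in an orthonormal basis adapted to the splitting \(\R^{2d}=T_z\Sigma\oplus N_z\Sigma\). First, the \(\mathcal H^d\)-null set: \(n\) is Lipschitz on the compact \(C^{1,1}\) manifold \(\Sigma\). In local \(C^{1,1}\) charts (for instance the graph chart \(x_1\mapsto(x_1,x_2(x_1))\) of \Cref{lemma: normal-frame}), Rademacher's theorem shows that \(n\) is differentiable along \(\Sigma\) outside an \(\mathcal H^d\)-null set \(\mathcal N\). Fix \(z\notin\mathcal N\). Since \(\Phi(z,s)=z+n(z)s\) is affine in \(s\), it is differentiable at \((z,s)\) jointly for every \(s\in\ball_{r_0}\). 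Indeed, \(\Phi(z',s')-\Phi(z,s)=(z'-z)+(n(z')-n(z))s+n(z')(s'-s)\), and \(n(z')(s'-s)=n(z)(s'-s)+O(|z'-z||s'-s|)\) by the Lipschitz bound. Hence
\[
D\Phi(z,s)[(X,\sigma)]=X+\sum_{p}s_p\,Dn_p(z)[X]+n(z)\sigma,\qquad X\in T_z\Sigma,\ \sigma\in\R^k.
\]

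Next, we decompose \(Dn_p(z)[X]\) into its tangential part \(G_p(z)X\) and a normal part \(n(z)\,a_p(z)[X]\) for some linear \(a_p(z):T_z\Sigma\to\R^k\). Taking an orthonormal basis of \(T_z\Sigma\) together with the columns of \(n(z)\) as an orthonormal basis of \(\R^{2d}\), the matrix of \(D\Phi(z,s)\) is block lower triangular:
\[
\begin{pmatrix}\id_{T_z\Sigma}+G(z,s) & 0\\ \sum_p s_p a_p(z) & I_k\end{pmatrix}.
\]
The upper right block vanishes because \(n(z)\sigma\in N_z\Sigma\), and the lower right block is \(I_k\) by orthonormality of the frame. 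Taking the determinant gives \(\mathcal J_\Phi(z,s)=|\det(\id+G(z,s))|\). Both bases are orthonormal, so this is the Jacobian in the required sense.

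For the bound, we use \(\|Dn_p(z)\|\le \mathrm{Lip}(n)\) at points of differentiability, together with the fact that orthogonal projection has norm \(1\). This gives \(\|G_p(z)\|\le \mathrm{Lip}(n)\), and hence, by Cauchy–Schwarz,
\[
\|G(z,s)\|\le\sum_p|s_p|\,\mathrm{Lip}(n)\le\sqrt{k}\,\mathrm{Lip}(n)\,|s|.
\]

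The main obstacle is the precise meaning of differentiability of \(n\) along \(\Sigma\), which is only a \(C^{1,1}\) manifold. The clean route is to pull back by a \(C^{1,1}\) chart \(\psi\): \(n\circ\psi\) is Lipschitz on an open set of \(\R^d\), hence differentiable a.e. We then define \(Dn(z)[X]\) via \(D\psi\). The chart maps null sets to \(\mathcal H^d\)-null sets, being bi-Lipschitz, and \(D\psi\) is onto \(T_z\Sigma\). Differentiability of \(n\circ\psi\) then transfers to differentiability of \(\Phi\circ(\psi\times\id)\), which is what is meant by differentiability of \(\Phi\) on \(\Sigma\times\ball_{r_0}\). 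Finally, the Lipschitz constant of \(n\) with respect to the ambient distance controls \(\|Dn_p(z)\|\) on \(T_z\Sigma\). This holds because \(\psi\) is differentiable and the chordal and intrinsic increments agree to first order.
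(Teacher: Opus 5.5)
Your proposal is correct and matches the paper's proof: the same block lower-triangular form of \(D\Phi(z,s)\) in the orthonormal basis adapted to \(T_z\Sigma\oplus N_z\Sigma\), giving \(\mathcal J_\Phi(z,s)=|\det(\operatorname{Id}+G(z,s))|\), and the same bound \(\|G(z,s)\|\le\sqrt k\,\mathrm{Lip}(n)\,|s|\). Your extra care about joint differentiability in \((z,s)\) and about defining \(Dn\) through \(C^{1,1}\) charts fills in details the paper leaves implicit, without changing the argument.
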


\begin{proof}
Fix \(z\in\Sigma\) at which \(n\) is differentiable along \(\Sigma\). Since \(\Phi(z,s)=z+n(z)s\) is affine in \(s\), the map \(\Phi\) is differentiable at \((z,s)\) for every \(s\in\ball_{r_0}\), and, for \(X\in T_z\Sigma\) and \(t\in\R^k\),
\[
D\Phi(z,s)[X,t]
=
X+\sum_{p=1}^{k}s_pDn_p(z)[X]+n(z)t
=
\bigl(X+G(z,s)X\bigr)+Q(z,s)X+n(z)t,
\]
where \(Q(z,s)X\defeq\proj_{N_z\Sigma}\bigl(\sum_{p=1}^{k}s_pDn_p(z)[X]\bigr)\in N_z\Sigma\). Hence, decomposing the target space as \(\R^{2d}=T_z\Sigma\oplus N_z\Sigma\), the operator \(D\Phi(z,s):T_z\Sigma\times\R^k\to T_z\Sigma\oplus N_z\Sigma\) has the block lower triangular form
\[
D\Phi(z,s)
=
\begin{pmatrix}
\operatorname{Id}_{T_z\Sigma}+G(z,s) & 0_{\R^k\to T_z\Sigma}\\
Q(z,s) & n(z)
\end{pmatrix}.
\]
Let \((e_1,\ldots,e_d)\) be an orthonormal basis of \(T_z\Sigma\). By \Cref{lemma: normal-frame}, the vectors \(e_1,\ldots,e_d\), \(n_1(z),\ldots,n_k(z)\) form an orthonormal basis of \(\R^{2d}\), in which the block \(n(z)\) has matrix \(I_k\). Computing \(\mathcal J_\Phi(z,s)\) in this basis and in the orthonormal basis of \(T_z\Sigma\times\R^k\) formed by \((e_1,\ldots,e_d)\) and the canonical basis of \(\R^k\), we obtain \(\mathcal J_\Phi(z,s)=\bigl|\det\bigl(\operatorname{Id}_{T_z\Sigma}+G(z,s)\bigr)\bigr|\). Finally, denoting by \(\operatorname{Lip}(n)\) the Lipschitz constant of \(n\), we have \(\|G_p(z)\|\leq\|Dn_p(z)\|\leq\operatorname{Lip}(n)\) for every \(p\), whence \(\|G(z,s)\|\leq\sum_{p=1}^{k}|s_p|\,\|G_p(z)\|\leq\sqrt k\operatorname{Lip}(n)|s|\).
\end{proof}

Since \(\Phi\) is a bi-Lipschitz bijection from \(\Sigma\times\ball_r\) onto \(\Sigma_r\), the area formula gives, for every \(0<r\leq r_0\) and every nonnegative Borel function \(f\) on \(\Sigma_r\),
\begin{equation}
\label{eq: tube-area-formula}
\int_{\Sigma_r}f\dd\vol_\Omega
=
\int_\Sigma\int_{\ball_r}f\bigl(\Phi(z,s)\bigr)\,\mathcal J_\Phi(z,s)\dd s\dd\mathcal H^d(z).
\end{equation}
After decreasing \(r_0\), we may assume that \(Cr_0\leq1/2\), so that \(\|G(z,s)\|\leq1/2\) for \(\mathcal H^d\)-a.e. \(z\in\Sigma\) and every \(s\in\ball_{r_0}\). In particular, \(\det\bigl(\operatorname{Id}_{T_z\Sigma}+G(z,s)\bigr)>0\), and expanding the determinant gives
\begin{equation}
\label{eq: tube-jacobian-expansion}
\mathcal J_\Phi(z,s)
=
1+L(z,s)+O(|s|^2),
\qquad
L(z,s)\defeq\operatorname{tr}G(z,s),
\end{equation}
where the remainder is uniform in \(z\). The function \(L(z,\cdot)\) is linear, hence odd, and \(|L(z,s)|\leq dC|s|\). In particular,
\begin{equation}
\label{eq: tube-jacobian-rescaled}
\mathcal J_\Phi(z,\sqrt\varepsilon s)
=
1+\sqrt\varepsilon L(z,s)+O(\varepsilon|s|^2)
\qquad\text{whenever }\sqrt\varepsilon|s|<r_0.
\end{equation}

\subsection{Geometric Laplace's method}
\label{subsec: laplace}
\begin{proposition}
\label{lemma: quadratic-detachment}
Under \ref{H1}--\ref{H3}, there exist constants \(r>0\) and
\(0<\underline{C}\leq\overline{C}<+\infty\) such that, for every
\(\x\in\Omega\) satisfying \(\dist(\x,\Sigma)\leq r\), one has
\[
\underline{C}\dist(\x,\Sigma)^2
\leq
E(\x)
\leq
\overline{C}\dist(\x,\Sigma)^2.
\]
\end{proposition}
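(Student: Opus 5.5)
We plan to Taylor-expand \(E\) along the normal fibres, using the tubular parametrization \(\Phi\) from \Cref{subsec:structure}. Take \(r\leq r_0\) and \(\x\in\Omega\) with \(\dist(\x,\Sigma)\leq r\), and write \(\x=\Phi(z,s)=z+n(z)s\) with \(z=\proj_\Sigma(\x)\) and \(|s|=\dist(\x,\Sigma)\) by \eqref{eq: normal-distance}. For \(r\) small, the segment \(t\mapsto z+t\,n(z)s\), \(t\in[0,1]\), stays inside \(\Sigma_{r_0}\), and we may assume it lies in a convex chart of the torus. We have \(E(z)=0\) and \(\nabla E(z)=0\), the latter because \(E\geq0\) attains its minimum at \(z\). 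Since \(E\in C^{2,1}(\Omega)\) by \ref{H2}, Taylor's formula with integral remainder gives
\[
E(\x)=\int_0^1(1-t)\,\hess E\bigl(z+t\,n(z)s\bigr)[n(z)s,n(z)s]\dd t .
\]

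We then compare this with the normal Hessian at \(z\). The Hessian of \(E\) is globally Lipschitz with some constant \(K\), so
\[
\Bigl|E(\x)-\tfrac12\Hn{z}(s,s)\Bigr|\leq \tfrac{K}{6}|s|^3 .
\]
Here we used that \(\hess E(z)[n(z)s,n(z)s]=\Hn{z}(s,s)\) by definition of the normal Hessian, with the convention fixed after \Cref{lemma: normal-frame}.

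The uniform ellipticity \eqref{eq: normal-ellipticity} from \Cref{prop: twist-geometry} then yields
\[
\Bigl(\tfrac{\lambda}{2}-\tfrac{K}{6}|s|\Bigr)|s|^2\leq E(\x)\leq\Bigl(\tfrac{\Lambda}{2}+\tfrac{K}{6}|s|\Bigr)|s|^2 .
\]
Shrinking \(r\) so that \(Kr/6\leq\lambda/4\), we can take \(\underline C=\lambda/4\) and \(\overline C=\Lambda/2+\lambda/4\). Since \(|s|=\dist(\x,\Sigma)\), this is exactly the statement.

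The only delicate point is bookkeeping rather than analysis. Uniformity in \(z\) comes from compactness of \(\Sigma\), which is already encoded in the constants \(\lambda\) and \(\Lambda\). The periodic identification has to be handled so that the straight segment from \(z\) to \(\x\) makes sense in a chart. This is fine as soon as \(r<1/2\) and the segment lies in the tube, where \(z+v\) is understood componentwise modulo \(\mathbb Z^d\) as in \Cref{subsec:structure}. If one only had \(E\in C^2\), the cubic bound would be replaced by a modulus of continuity of \(\hess E\), and the same argument would go through.
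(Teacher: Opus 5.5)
Your proof is correct and follows the same route as the paper: tubular coordinates, \(E(z)=0\) and \(\nabla E(z)=0\), a second-order Taylor expansion with a uniform cubic remainder from the \(C^{2,1}\) regularity, and the normal ellipticity of \Cref{prop: twist-geometry}, shrinking \(r\) so the remainder is absorbed. Your integral-remainder computation only makes the paper's uniform \(O(|s|^3)\) bound explicit, with constant \(K/6\); note that you should take \(r<r_0\) strictly, as the paper does, because the tube \(\Sigma_{r_0}\) is open and the statement allows \(\dist(\x,\Sigma)=r\).
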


\begin{proof}
Let \(r_0>0\) be as in \Cref{subsec:structure}, and let
\(\x\in\Sigma_{r_0}\). Then \(\x=\Phi(z,s)\) with \(z=\proj_\Sigma(\x)\) and a unique \(s\in\ball_{r_0}\), and \(|s|=\dist(\x,\Sigma)\) by \eqref{eq: normal-distance}. Since \(z\) is a minimum point of \(E\geq0\), one has \(E(z)=0\) and \(\nabla E(z)=0\), so that a Taylor expansion of \(E\) gives
\[
E(\x)
=
\frac{1}{2}\Hn{z}(s,s)
+
\mathcal R(z,s),
\qquad
\mathcal R(z,s)=O\bigl(|s|^3\bigr)
\quad\text{as }s\to0,
\]
uniformly with respect to \(z\in\Sigma\), since \(E\) belongs to \(C^{2,1}(\Omega)\).
By \eqref{eq: normal-ellipticity}, \(\lambda |s|^2 \leq \Hn{z}(s,s) \leq \Lambda |s|^2\).
Taking \(r<r_0\) small enough, we have \(|\mathcal R(z,s)|\leq\frac{\lambda}{4}|s|^2\) whenever \(z\in\Sigma\) and \(|s|\leq r\).
Consequently,
\[
\frac{\lambda}{4}\dist(\x,\Sigma)^2
\leq
E(\x)
\leq
\frac{3\Lambda}{4}\dist(\x,\Sigma)^2,
\]
and the conclusion follows by setting \(
\underline{C}\defeq\frac{\lambda}{4}\) and \(
\overline{C}\defeq\frac{3\Lambda}{4}\).
\end{proof}

\begin{theorem}
\label{thm: convergence-Gibbs-measure}
Under \ref{H1}--\ref{H4}, as \(\varepsilon\to0\), the Gibbs measure \(\gibbs\) converges weakly to a probability measure \(\mathbb{P}_0\), given by
\[
\dd\mathbb{P}_0(z)
=
\frac{1}{Z_0}
\left(\det \Hn{z}\right)^{-1/2}
\rho(z)
\dd\mathcal H^d_{|\Sigma}(z),
\qquad
Z_0
\defeq
\int_\Sigma
\left(\det \Hn{z}\right)^{-1/2}
\rho(z)
\dd\mathcal H^d_{|\Sigma}(z).
\]
Moreover, \(Z_\varepsilon=(2\pi\varepsilon)^{k/2}(Z_0+o(1))\).
\end{theorem}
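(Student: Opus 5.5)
The plan is to prove a two-sided asymptotic for the unnormalized integrals: for every continuous test function \(f\) on \(\Omega\),
\[
I_\varepsilon(f)\defeq\int_\Omega f\,e^{-E/\varepsilon}\rho\dd\vol_\Omega=(2\pi\varepsilon)^{k/2}\Bigl(\int_\Sigma f\,(\det\Hn{z})^{-1/2}\rho\dd\mathcal H^d(z)+o(1)\Bigr).
\]
Taking \(f\equiv1\) yields \(Z_\varepsilon=(2\pi\varepsilon)^{k/2}(Z_0+o(1))\). The ratio \(I_\varepsilon(f)/Z_\varepsilon\) then converges to \(\int f\dd\mathbb P_0\), which is the weak convergence, since \(Z_0>0\): \(\rho\geq\alpha^2\), \(\det\Hn{z}\le\Lambda^k\), and \(\Sigma\neq\emptyset\).

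First I split \(\Omega=\Sigma_r\cup(\Omega\setminus\Sigma_r)\) with \(r\le r_0\) as in \Cref{lemma: quadratic-detachment}. On the complement, \(E\) is continuous and positive on a compact set, so \(E\ge\delta>0\) there. Its contribution is therefore at most \(\|f\|_\infty\beta^2e^{-\delta/\varepsilon}=o(\varepsilon^{k/2})\).

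On the tube, I use the area formula \eqref{eq: tube-area-formula} and rescale \(s=\sqrt\varepsilon t\). This gives
\[
\varepsilon^{k/2}\int_\Sigma\int_{\ball_{r/\sqrt\varepsilon}}(f\rho)(\Phi(z,\sqrt\varepsilon t))\,e^{-E(\Phi(z,\sqrt\varepsilon t))/\varepsilon}\,\mathcal J_\Phi(z,\sqrt\varepsilon t)\dd t\dd\mathcal H^d(z).
\]
By the Taylor expansion in the proof of \Cref{lemma: quadratic-detachment}, \(E(\Phi(z,\sqrt\varepsilon t))/\varepsilon=\frac12\Hn{z}(t,t)+O(\sqrt\varepsilon|t|^3)\) uniformly in \(z\), and this tends pointwise to \(\frac12\Hn{z}(t,t)\). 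By \eqref{eq: tube-jacobian-rescaled}, \(\mathcal J_\Phi\to1\), and \(f\rho\) is continuous, so \((f\rho)\circ\Phi(z,\sqrt\varepsilon t)\to(f\rho)(z)\). For domination, \Cref{lemma: quadratic-detachment} together with \eqref{eq: normal-distance} gives \(E(\Phi(z,\sqrt\varepsilon t))/\varepsilon\ge\underline C|t|^2\). Moreover \(\mathcal J_\Phi\le(3/2)^d\) since \(\|G\|\le1/2\), and \(f\rho\) is bounded. Hence the integrand is bounded by \(C e^{-\underline C|t|^2}\), which is integrable on \(\Sigma\times\R^k\) because \(\mathcal H^d(\Sigma)<\infty\) by compactness. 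Dominated convergence then gives the limit
\[
\int_\Sigma(f\rho)(z)\int_{\R^k}e^{-\frac12\Hn{z}(t,t)}\dd t\dd\mathcal H^d(z),
\]
and the inner Gaussian integral equals \((2\pi)^{k/2}(\det\Hn{z})^{-1/2}\).

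The main point needing care is the \(z\)-uniformity of the Taylor remainder and of the quadratic lower bound, since \(\Phi\) is only Lipschitz and \(\mathcal J_\Phi\) is only defined \(\mathcal H^d\)-a.e. Both are already provided by \ref{H2}, the compactness of \(\Sigma\), \Cref{lemma: quadratic-detachment} and \Cref{lemma: determinant of Phi}, so only a pointwise a.e. limit and a uniform Gaussian majorant are required. No higher-order expansion is needed, because only the leading term is claimed; in particular the odd term \(L\) plays no role here.
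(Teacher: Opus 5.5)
Your proof is correct and follows the paper's geometric Laplace argument: tubular coordinates via the area formula, the rescaling $s=\sqrt\varepsilon t$, the uniform Taylor expansion of $E$, quadratic detachment, and an exponentially small contribution away from $\Sigma$. The only difference is that you use the quadratic detachment bound $E/\varepsilon\geq\underline C|t|^2$ as a single Gaussian majorant on the whole tube, so one dominated-convergence step replaces the paper's Steps~1 and~2 (truncation at $|s|<R\sqrt\varepsilon$ with the crude bound $e^{C\sqrt\varepsilon R^3}$, a separate Gaussian tail estimate, then $R\to\infty$).
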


\begin{remark}
  We postpone the proof to \Cref{subsec: Convergence of the Gibbs measure and the geometric Laplace's method}. It follows the general strategy of \cite{hwang1980laplace}, with the various steps of the geometric Laplace method made explicit, in a slightly different setting: the proof is formulated so as to cover the case of compactly supported marginals in \(\R^d\), and requires only \(C^{2,1}\)-regularity of the duality gap \(E\) and \(C^{1,1}\)-regularity of the contact set \(\Sigma\).
\end{remark}

\begin{remark}
A connection between the geometric Laplace method and optimal transport has already been established by Léger and Vialard in \cite{leger2023geometric}, under stronger regularity assumptions and assuming that the contact set is the graph of a \(C^3\)-diffeomorphism. These assumptions allow them to derive a first-order geometric Laplace expansion. In contrast, our purpose is only to identify the leading-order limit, under weaker regularity assumptions and without requiring the contact set to be globally a graph.
\end{remark}

\section{The \texorpdfstring{\(\Gamma\)-}{Γ-}expansion of Entropic Optimal Transport}
\label{sec: selection}
Throughout this section, we assume \ref{H1}--\ref{H4}, and we consider the functionals defined on \(\mathcal P(\Omega)\) by
\begin{equation}
\label{eq: G-functionals}
\mathcal G_\varepsilon(\gamma)\defeq\entropy(\gamma\,|\,\mathbb P_\varepsilon)+\chi_{\Pi(\mu_1,\mu_2)}(\gamma),
\qquad
\mathcal G_0(\gamma)\defeq\entropy(\gamma\,|\,\mathbb P_0)+\chi_{\Pi(\mu_1,\mu_2)}(\gamma).
\end{equation}
By \Cref{prop: EOT-Gibbs} and \Cref{thm: convergence-Gibbs-measure}, the functionals \(\mathcal F_\varepsilon\), \(\mathcal F^{(0)}\) and \(\mathcal F^{(1)}\) of \hyperref[thm: A]{Theorem~A} satisfy
\begin{equation}
\label{eq: G-vs-F}
\begin{aligned}
\frac{\mathcal F_\varepsilon-\min\mathcal F^{(0)}}{\varepsilon}-\frac k2\log\frac1\varepsilon
&=\mathcal G_\varepsilon-\log Z_0-\frac k2\log(2\pi)+o(1),
\\
\mathcal F^{(1)}&=\mathcal G_0-\log Z_0-\frac k2\log(2\pi),
\end{aligned}
\end{equation}
where the \(o(1)\) does not depend on \(\gamma\). Hence, the first-order \(\Gamma\)-expansion of \hyperref[thm: A]{Theorem~A} amounts to the \(\Gamma\)-convergence of \(\mathcal G_\varepsilon\) to \(\mathcal G_0\) as \(\varepsilon\to0\), for the weak convergence of probability measures, that is, to the following two inequalities:
\begin{itemize}
\item (\(\Gamma\)-\(\liminf\) inequality) for every \(\overline\gamma\in\mathcal P(\Omega)\) and every family \((\gamma_\varepsilon)_{\varepsilon>0}\) in \(\mathcal P(\Omega)\) such that \(\gamma_\varepsilon\rightharpoonup\overline\gamma\) as \(\varepsilon\to0\),
\begin{equation}
\label{eq: gamma-liminf}
\liminf_{\varepsilon\to0}\mathcal G_\varepsilon(\gamma_\varepsilon)\geq\mathcal G_0(\overline\gamma);
\end{equation}
\item (\(\Gamma\)-\(\limsup\) inequality) for every \(\overline\gamma\in\mathcal P(\Omega)\), there exists a family \((\gamma_\varepsilon)_{\varepsilon>0}\) in \(\mathcal P(\Omega)\), called a recovery sequence for \(\overline\gamma\), such that \(\gamma_\varepsilon\rightharpoonup\overline\gamma\) as \(\varepsilon\to0\) and
\begin{equation}
\label{eq: gamma-limsup}
\limsup_{\varepsilon\to0}\mathcal G_\varepsilon(\gamma_\varepsilon)\leq\mathcal G_0(\overline\gamma).
\end{equation}
\end{itemize}

The \(\Gamma\)-\(\liminf\) inequality \eqref{eq: gamma-liminf} follows directly from the convergence of the Gibbs measures. The core of the section is the \(\Gamma\)-\(\limsup\) inequality \eqref{eq: gamma-limsup}, that is, the construction of a recovery sequence: we first perturb a given plan in the normal directions to \(\Sigma\) (\Cref{subsec: normal-recovery}), and then correct its marginals (\Cref{subsec: correction}). Both inequalities, and then \hyperref[thm: A]{Theorem~A}, are proved in \Cref{subsec: selection-value-asymptotics}. Finally, \Cref{subsection: proof of the main result} contains the proof of the key estimate on the marginals of the perturbed plans.

\subsection{The recovery sequence: a normal perturbation approach}
\label{subsec: normal-recovery}
To prove the \(\Gamma\)-\(\limsup\) inequality \eqref{eq: gamma-limsup}, we fix \(\overline\gamma\) such that \(\mathcal G_0(\overline\gamma)<\infty\) (otherwise, \(\gamma_\varepsilon\defeq\overline\gamma\) is a recovery sequence), that is, \(\overline\gamma\in\Pi(\mu_1,\mu_2)\) with \(\entropy(\overline\gamma\,|\,\mathbb P_0)<\infty\), and we look for plans \(\gamma_\varepsilon\in\Pi(\mu_1,\mu_2)\) such that \(\gamma_\varepsilon\rightharpoonup\overline\gamma\) and \(\limsup_{\varepsilon\to0}\mathcal G_\varepsilon(\gamma_\varepsilon)\leq\mathcal G_0(\overline\gamma)\). We proceed in two steps: we first spread \(\overline\gamma\) in the normal directions according to the Gaussian profile of the Gibbs measure, which gives the right entropy but not the right marginals, and we then correct the marginals in \Cref{subsec: correction}.

Since \(\overline\gamma\ll\mathbb P_0\ll\mathcal H^d_{|\Sigma}\), we write \(\overline\gamma=\overline\rho\,\mathcal H^d_{|\Sigma}\). We fix a truncation radius \(R\geq1\) for the rescaled normal variable \(s\), the distance to \(\Sigma\) being \(\sqrt\varepsilon|s|\), and we take \(\varepsilon\) small enough so that \(\sqrt\varepsilon R<r_0\), with \(r_0\) as in \Cref{subsec:structure}; \(R\) will be sent to infinity at the end of the proof. Recall that \(\Hn{z}=\hess^\perp E(z)\) is the normal Hessian of \(E\) at \(z\in\Sigma\) (\Cref{subsec: assumptions}), seen as a quadratic form on \(\R^k\) through the normal frame \(n(z)\) (\Cref{subsec:structure}).

For \(z\in\Sigma\), let
\[
K_R(z,s)\defeq\frac{\mathbf1_{\ball_R}(s)\,e^{-\Hn{z}(s,s)/2}}{Z_R(z)},
\qquad
Z_R(z)\defeq\int_{\ball_R}e^{-\Hn{z}(s,s)/2}\dd s,
\]
be the density of the centered Gaussian measure on \(\R^k\) with covariance \(\Hn{z}^{-1}\), truncated to \(\ball_R\) and renormalized. By \eqref{eq: normal-ellipticity}, \(Z_R(z)\geq c\defeq\int_{\ball_1}e^{-\Lambda|s|^2/2}\dd s>0\) for \(R\geq1\), so that the moments of these kernels are bounded uniformly in \(z\in\Sigma\) and \(R\geq1\): for every \(p\geq0\),
\begin{equation}
\label{eq: gaussian-moments}
\int_{\ball_R}|s|^p\,K_R(z,s)\dd s\leq M_p\defeq\frac1c\int_{\R^k}|s|^pe^{-\lambda|s|^2/2}\dd s<\infty.
\end{equation} Setting \(\Phi_\varepsilon(z,s)\defeq\Phi(z,\sqrt\varepsilon s)\), we have \(\overline\gamma=(\Phi_\varepsilon)_\#(\overline\gamma\otimes\delta_0)\). Replacing the Dirac mass in the normal variable by this truncated Gaussian, we define the \emph{normal perturbation} of \(\overline\gamma\) by
\begin{equation}
\label{eq: recovery sequence with wrong marginals 1}
\widetilde\gamma_{\varepsilon,R}\defeq(\Phi_\varepsilon)_\#\bigl(K_R(z,s)\dd s\,\dd\overline\gamma(z)\bigr),
\end{equation}
that is, \(\int_\Omega f\dd\widetilde\gamma_{\varepsilon,R}=\int_\Sigma\int_{\ball_R}f(\Phi(z,\sqrt\varepsilon s))\,K_R(z,s)\dd s\dd\overline\gamma(z)\) for every nonnegative Borel function \(f\) on \(\Omega\). Since \(K_R(z,\cdot)\) is a probability density, \(\widetilde\gamma_{\varepsilon,R}\) is a probability measure, concentrated on \(\Sigma_{\sqrt\varepsilon R}\). In the original normal variable \(\eta=\sqrt\varepsilon s\), its profile in each normal fiber is the Gaussian with covariance \(\varepsilon \Hn{z}^{-1}\), truncated at distance \(\sqrt\varepsilon R\) from \(\Sigma\), which mimics the Gibbs measure \(\gibbs\) near \(\Sigma\), since \(E(\Phi(z,\eta))\simeq\frac12\Hn{z}(\eta,\eta)\). By \eqref{eq: tube-area-formula}, \(\widetilde\gamma_{\varepsilon,R}\) is absolutely continuous with respect to \(\vol_\Omega\), with a density \(\widetilde p_{\varepsilon,R}\) vanishing outside \(\Sigma_{\sqrt\varepsilon R}\) and given in tubular coordinates by
\begin{equation}
\label{eq: normal-recovery-density}
\widetilde p_{\varepsilon,R}\bigl(\Phi(z,\sqrt\varepsilon s)\bigr)
=\frac{\overline\rho(z)\,K_R(z,s)}{\varepsilon^{k/2}\,\mathcal J_\Phi(z,\sqrt\varepsilon s)}
\qquad\text{for \(\mathcal H^d\)-a.e.\ \(z\in\Sigma\) and every \(s\in\ball_R\).}
\end{equation}

Since \(\rho_i\geq\alpha>0\), the marginals \(\widetilde\mu_{i,\varepsilon,R}\), \(i=1,2\), of \(\widetilde\gamma_{\varepsilon,R}\) are absolutely continuous with respect to \(\mu_i\). They differ from \(\mu_i\) in general, and are corrected in \Cref{subsec: correction}. Finally, the image of \(K_R(z,s)\dd s\,\dd\overline\gamma(z)\) under the map \((z,s)\mapsto(z,\Phi_\varepsilon(z,s))\) is a coupling of \(\overline\gamma\) and \(\widetilde\gamma_{\varepsilon,R}\), and the distance between \(z\) and \(\Phi_\varepsilon(z,s)=z+\sqrt\varepsilon\,n(z)s\) is at most \(\sqrt\varepsilon|s|\leq\sqrt\varepsilon R\), so that
\begin{equation}
\label{eq: normal-perturbation-W2}
W_2(\widetilde\gamma_{\varepsilon,R},\overline\gamma)\leq\sqrt\varepsilon R\longrightarrow0
\qquad(\varepsilon\to0).
\end{equation}
In particular, since the marginals of \(\overline\gamma\) are \(\mu_i\) and the projections \(\Omega\to M_i\) are \(1\)-Lipschitz, we also have \(W_2(\widetilde\mu_{i,\varepsilon,R},\mu_i)\leq\sqrt\varepsilon R\) for \(i=1,2\).

\begin{lemma}[Entropy of the normal perturbation]
\label{lemma: convergence entropy}
For every fixed \(R\geq1\), as \(\varepsilon\to0\),
\begin{equation}
\label{eq: fixed-R-entropy} \entropy\left(\widetilde\gamma_{\varepsilon,R}\,|\,\gibbs\right)
 \longrightarrow
 \entropy(\overline\gamma\,|\,\mathbb P_0)+b_R(\overline\gamma),
 \qquad
b_R(\overline\gamma)=\int_\Sigma\log\frac{Z_\infty(z)}{Z_R(z)}\,\dd\overline\gamma(z),
\end{equation}
where \(Z_\infty(z)\defeq\int_{\R^k}e^{-\Hn{z}(s,s)/2}\dd s=(2\pi)^{k/2}(\det \Hn{z})^{-1/2}\).
Moreover, \(b_R(\overline\gamma)\geq0\) and
\(\sup_{\overline\gamma\in\mathcal P(\Sigma)}b_R(\overline\gamma)\to0\)
as \(R\to\infty\).
\end{lemma}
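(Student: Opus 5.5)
We compute the entropy directly in tubular coordinates. Since \(\widetilde\gamma_{\varepsilon,R}\) has density \(\widetilde p_{\varepsilon,R}\) with respect to \(\vol_\Omega\), and \(\gibbs\) has density \(Z_\varepsilon^{-1}e^{-E/\varepsilon}\rho\), we have
\[
\entropy(\widetilde\gamma_{\varepsilon,R}\,|\,\gibbs)=\int_\Omega\Bigl[\log\widetilde p_{\varepsilon,R}+\frac E\varepsilon-\log\rho+\log Z_\varepsilon\Bigr]\dd\widetilde\gamma_{\varepsilon,R}.
\]
Using the push-forward definition, this becomes an integral over \(K_R(z,s)\dd s\,\dd\overline\gamma(z)\) of the integrand evaluated at \(\Phi(z,\sqrt\varepsilon s)\). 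By \eqref{eq: normal-recovery-density},
\[
\log\widetilde p_{\varepsilon,R}=\log\overline\rho(z)+\log K_R(z,s)-\tfrac k2\log\varepsilon-\log\mathcal J_\Phi(z,\sqrt\varepsilon s),
\]
and \Cref{thm: convergence-Gibbs-measure} gives \(\log Z_\varepsilon=\tfrac k2\log(2\pi\varepsilon)+\log Z_0+o(1)\). The \(\log\varepsilon\) terms cancel exactly. We then treat the remaining terms one at a time, all uniformly in \(z\) because \(|s|\le R\) is fixed.

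For the Jacobian term, \eqref{eq: tube-jacobian-rescaled} gives \(|\log\mathcal J_\Phi(z,\sqrt\varepsilon s)|\le C\sqrt\varepsilon R\), so it vanishes. For the energy term, the Taylor expansion from the proof of \Cref{lemma: quadratic-detachment} gives
\[
E(\Phi(z,\sqrt\varepsilon s))/\varepsilon=\tfrac12\Hn{z}(s,s)+O(\sqrt\varepsilon R^3)
\]
uniformly. For the reference density, \(\rho\) is Lipschitz and bounded below by \(\alpha^2\), so \(\log\rho(\Phi(z,\sqrt\varepsilon s))=\log\rho(z)+O(\sqrt\varepsilon R)\). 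Next, \(\log K_R(z,s)=-\tfrac12\Hn{z}(s,s)-\log Z_R(z)\) on \(\ball_R\), so the quadratic terms cancel exactly. Collecting everything, the integrand equals
\[
\log\overline\rho(z)-\log\rho(z)+\log Z_0-\log Z_R(z)+\tfrac k2\log(2\pi)+o(1).
\]
By the formula for \(\mathbb P_0\),
\[
\log\frac{\dd\overline\gamma}{\dd\mathbb P_0}(z)=\log\overline\rho(z)+\log Z_0+\tfrac12\log\det\Hn{z}-\log\rho(z).
\]
Also \(\tfrac k2\log(2\pi)+\tfrac12\log\det\Hn{z}\cdot(-1)=\log Z_\infty(z)\), so the integrand is \(\log\frac{\dd\overline\gamma}{\dd\mathbb P_0}(z)+\log\frac{Z_\infty(z)}{Z_R(z)}+o(1)\). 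Integrating against \(\overline\gamma\) yields \eqref{eq: fixed-R-entropy}. The integrals are legitimate because \(\entropy(\overline\gamma\,|\,\mathbb P_0)<\infty\) and all other terms are bounded.

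The main point to check is that the \(o(1)\) errors are uniform in \(z\) and that \(\log\overline\rho\) is integrated only against \(\overline\gamma\), since it is independent of \(s\). This holds because the Taylor remainder is uniform in \(z\) by the \(C^{2,1}\) regularity in \ref{H2}, \(n\) is Lipschitz, and \(|s|\le R\). No dominated-convergence issue arises, because all errors are \(L^\infty\)-small.

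Finally, \(Z_R(z)\le Z_\infty(z)\) gives \(b_R\ge0\). For uniform smallness,
\[
\log\frac{Z_\infty}{Z_R}\le\frac{Z_\infty-Z_R}{Z_R}\le\frac1c\int_{|s|\ge R}e^{-\lambda|s|^2/2}\dd s\to0,
\]
using \eqref{eq: normal-ellipticity} and \(Z_R\ge c\). This bound is independent of \(z\), so \(\sup_{\overline\gamma\in\mathcal P(\Sigma)}b_R(\overline\gamma)\to0\) as \(R\to\infty\).
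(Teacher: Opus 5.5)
Your proposal is correct and follows essentially the same route as the paper. You write the log-density in tubular coordinates, cancel the quadratic terms against the kernel and the powers of $\varepsilon$ against the asymptotics of $Z_\varepsilon$, control the Taylor remainder, the Jacobian and $\log\rho$ by $O(\sqrt\varepsilon)$, and bound $b_R$ via the uniform Gaussian tail and $Z_R\geq c$. The only cosmetic difference is that you bound the errors using $|s|\le R$, which gives $R$-dependent constants and is harmless for fixed $R$, whereas the paper uses the Gaussian moment bounds to get constants independent of $R$.
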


\begin{proof}
By \eqref{eq: normal-recovery-density}, and since \(\gibbs\) has density \(Z_\varepsilon^{-1}e^{-E/\varepsilon}\rho\) with respect to \(\vol_\Omega\), where \(\rho\) is the density of \(\mu_1\otimes\mu_2\), we have, for \(\mathcal H^d\)-a.e.\ \(z\in\Sigma\) and every \(s\in\ball_R\),
\begin{align*}
 \log\left(
 \frac{\dd\widetilde\gamma_{\varepsilon,R}}
      {\dd\gibbs}(\Phi(z,\sqrt\varepsilon s))\right)
 &=\log\overline\rho(z)
   +\log\left(\frac{Z_\varepsilon}
   {\varepsilon^{k/2}Z_R(z)}\right)
   -\log\rho(\Phi(z,\sqrt\varepsilon s))
\\
 &\qquad+\frac{E(\Phi(z,\sqrt\varepsilon s))-\frac12\Hn{z}(\sqrt\varepsilon s,\sqrt\varepsilon s)}
 {\varepsilon}
 -\log\mathcal J_\Phi(z,\sqrt\varepsilon s).
\end{align*}
As in the proof of \Cref{lemma: quadratic-detachment}, a Taylor expansion of \(E\) gives
\[
E(\Phi(z,\eta))=\frac12\Hn{z}(\eta,\eta)+\mathcal R(z,\eta),
\qquad
|\mathcal R(z,\eta)|\leq C|\eta|^3.
\]
Since \(\entropy(\widetilde\gamma_{\varepsilon,R}\,|\,\gibbs)=\int_\Omega\log\frac{\dd\widetilde\gamma_{\varepsilon,R}}{\dd\gibbs}\dd\widetilde\gamma_{\varepsilon,R}\), we may compute this integral in tubular coordinates by \eqref{eq: recovery sequence with wrong marginals 1}, that is, integrate the above identity against \(K_R(z,s)\dd s\,\dd\overline\gamma(z)\). We obtain
\begin{align*}
\entropy\left(\widetilde\gamma_{\varepsilon,R}\,|\,\gibbs \right)
 &=\int_\Sigma\log\overline\rho(z)\overline\gamma(\dd z)
 +\int_\Sigma\log\left(\frac{Z_\varepsilon}
 {\varepsilon^{k/2}Z_R(z)}\right)
 \overline\gamma(\dd z)
 \\
 &\qquad-\int_\Sigma\int_{\ball_R}
 \Bigl(\log\rho(\Phi(z,\sqrt\varepsilon s))
       -\frac{\mathcal R(z,\sqrt\varepsilon s)}{\varepsilon}
 \\
 &\qquad\qquad\qquad\qquad
       +\log\mathcal J_\Phi(z,\sqrt\varepsilon s)\Bigr)
 K_R(z,s)\dd s\,\overline\gamma(\dd z).
\end{align*}
These integrals are finite: since \(\entropy(\overline\gamma\,|\,\mathbb P_0)<\infty\) and the density of \(\mathbb P_0\) with respect to \(\mathcal H^d_{|\Sigma}\) is bounded from above and below by positive constants, we have
\(\int_\Sigma|\log\overline\rho|\,\dd\overline\gamma<\infty\).
Since \(|\mathcal R(z,\sqrt\varepsilon s)|\leq C\varepsilon^{3/2}|s|^3\), since \(\log\rho\) is Lipschitz (\(\rho\) being Lipschitz and bounded below by a positive constant), and since \(|\log\mathcal J_\Phi(z,\sqrt\varepsilon s)|\leq C\sqrt\varepsilon|s|\) by \eqref{eq: tube-jacobian-rescaled}, the double integral in the last line differs from \(\int_\Sigma\log\rho\dd\overline\gamma\) by at most
\[
C\sqrt\varepsilon\int_\Sigma\int_{\ball_R}\bigl(|s|+|s|^3\bigr)K_R(z,s)\dd s\,\overline\gamma(\dd z)\leq C\sqrt\varepsilon\longrightarrow0,
\]
by \eqref{eq: gaussian-moments}.
Moreover, by \Cref{thm: convergence-Gibbs-measure},
\(Z_\varepsilon=(2\pi\varepsilon)^{k/2}(Z_0+o(1))\), so that, uniformly in \(z\),
\[
 \log\left(\frac{Z_\varepsilon}
 {\varepsilon^{k/2}Z_R(z)}\right)
 =\log\frac{(2\pi)^{k/2}Z_0}{Z_R(z)}+o(1)
 =\log\left(Z_0\sqrt{\det \Hn{z}}\right)
  +\log\frac{Z_\infty(z)}{Z_R(z)}+o(1).
\]
Combining these identities yields
\begin{align*}
 \entropy\left(\widetilde\gamma_{\varepsilon,R}\,|\,\gibbs \right)
 &\longrightarrow
 \int_\Sigma\log\frac{\overline\rho(z)}{\rho(z)}\overline\gamma(\dd z)
 +\int_\Sigma\log\left(Z_0\sqrt{\det \Hn{z}}\right)
 \overline\gamma(\dd z)
 \\
 &\qquad+\int_\Sigma\log\frac{Z_\infty(z)}{Z_R(z)}
 \overline\gamma(\dd z)
 \\
 &=\entropy\left(\overline\gamma\,|\,\mathbb P_0\right)+b_R(\overline\gamma).
\end{align*}
Finally, the Gaussian tail estimate
\[
 0\leq Z_\infty(z)-Z_R(z)
 \leq\int_{|s|\geq R}\exp\left(-\frac{\lambda}{2}|s|^2\right)\,\dd s
 \longrightarrow0\qquad(R\to\infty)
\]
is uniform in \(z\). Together with \(Z_R(z)\geq c>0\), this shows
that \(b_R\geq0\) tends to zero uniformly over probability measures
on \(\Sigma\), and proves the claim.
\end{proof}

\subsection{Correction of the marginals}
\label{subsec: correction}
The normal perturbation \(\widetilde\gamma_{\varepsilon,R}\) has the expected entropy (\Cref{lemma: convergence entropy}), but not the prescribed marginals. To correct them, we use the shadow construction of \cite{eckstein2022quantitative}:
each perturbed marginal is transported to the prescribed one by an optimal transport map, and
the product of these maps is applied to the plan, as illustrated in the diagram below. In \cite{eckstein2022quantitative}, the construction is carried out more generally with optimal transport plans; here it is simpler, since the perturbed marginals are absolutely continuous, so that optimal transport maps exist.

\begin{center} 
\begin{tikzpicture}[scale=1,>=stealth]
\node[below left] at (0,-0.5) {\(\mu_1\)};
\node[above left] at (0,1) {\(\widetilde\mu_1\)};
\node[above right] at (3,1) {\(\widetilde\mu_2\)};
\node[below right] at (3,-0.5) {\(\mu_2\)};

\node[above] at (1.6,1.4) {\(\widetilde{\gamma}\)};
\node[left] at (-0.4,0.3) {\(T_1\)};
\node[right] at (3.4,0.3) {\(T_2\)};

\draw[<-, thick] (-0.3,-0.4) -- (-0.3,1);
\draw[<->, thick] (0.1,1.3) -- (2.9,1.3);
\draw[->, thick] (3.3,1) -- (3.3,-0.4);
\end{tikzpicture}
\end{center}

\begin{definition}[Shadow {\cite[Definition~3.1]{eckstein2022quantitative}}]
\label{def: shadow} 
Let \(\mu_i, \widetilde{\mu}_i \in \mathcal{P}(M_i)\) with \(\widetilde\mu_i\ll\vol_{M_i}\), and let \(T_i\) denote the optimal transport map from \(\widetilde \mu_i\) to \(\mu_i\) for the quadratic cost, \(i=1,2\). Then, for a given \(\widetilde{\gamma} \in \Pi(\widetilde{\mu}_1,\widetilde\mu_2)\), its \emph{shadow} \(\gamma \in \Pi(\mu_1,\mu_2)\) is defined by
\[
\gamma \defeq \big(T_1,T_2\big)_\# \widetilde{\gamma}.
\]
\end{definition}
The interest of this construction is that it turns a plan with good properties but mismatched marginals into a plan with the prescribed marginals, which remains close to the original one. More precisely, the distance between the shadow and the original plan depends only on the distance between the marginals, and its relative entropy is controlled by that of the original plan, as stated in the following lemma.

\begin{lemma}[Properties of the shadow {\cite[Lemma~3.2]{eckstein2022quantitative}}]
\label{prop: bound entropy shadow}
Given \(\widetilde{\gamma} \in \Pi(\widetilde{\mu}_1,\widetilde{\mu}_2)\) and its shadow \(\gamma\in \Pi(\mu_1,\mu_2)\), one has
\[
    W_2(\gamma,\widetilde{\gamma})^2 =\sum_{i=1}^2 W_2(\mu_i,\widetilde{\mu}_i)^2.
\]
Furthermore, the relative entropy of the shadow is bounded by that of the original plan:
\[
    \entropy\left(\gamma \,|\,\mu_1 \otimes \mu_2\right) \leq \entropy\left(\widetilde{\gamma} \,|\, \widetilde\mu_1\otimes\widetilde\mu_2\right).
\]
\end{lemma}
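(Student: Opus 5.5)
The plan is to treat the two claims separately, using only the definition of the shadow $\gamma=(T_1,T_2)_\#\widetilde\gamma$ and the fact that $T_i$ is an optimal map from $\widetilde\mu_i$ to $\mu_i$.

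\emph{The Wasserstein identity.} For the upper bound, I would use the coupling $(\mathrm{id},(T_1,T_2))_\#\widetilde\gamma$ between $\widetilde\gamma$ and $\gamma$. Since the metric on $\Omega$ is the flat product metric, the squared distance splits as $|x_1-T_1(x_1)|^2+|x_2-T_2(x_2)|^2$. Integrating against $\widetilde\gamma$, whose marginals are $\widetilde\mu_i$, gives $\sum_i\int|x_i-T_i(x_i)|^2\dd\widetilde\mu_i=\sum_iW_2(\widetilde\mu_i,\mu_i)^2$, by optimality of $T_i$.

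For the lower bound, take any coupling $\pi$ of $\widetilde\gamma$ and $\gamma$. Projecting $\pi$ onto the pairs of $i$-th components gives a coupling of $\widetilde\mu_i$ and $\mu_i$. Hence its cost is at least $W_2(\widetilde\mu_i,\mu_i)^2$ for each $i$, and summing over $i$ yields the reverse inequality, so equality holds.

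\emph{The entropy inequality.} This is the main step. I would use the data processing inequality: for any measurable map $F$ and measures $\alpha,\beta$, one has $\entropy(F_\#\alpha\,|\,F_\#\beta)\le\entropy(\alpha\,|\,\beta)$. Apply it with $F=(T_1,T_2)$, $\alpha=\widetilde\gamma$ and $\beta=\widetilde\mu_1\otimes\widetilde\mu_2$. The product structure of $F$ gives $F_\#(\widetilde\mu_1\otimes\widetilde\mu_2)=(T_1)_\#\widetilde\mu_1\otimes(T_2)_\#\widetilde\mu_2=\mu_1\otimes\mu_2$, which is exactly the claimed bound.

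To keep things self-contained, I would prove the data processing inequality via the Donsker--Varadhan variational formula, $\entropy(\alpha\,|\,\beta)=\sup_g\{\int g\dd\alpha-\log\int e^g\dd\beta\}$ over bounded measurable $g$. Testing this formula with the functions $g\circ F$ shows that the supremum for the pushforwards is bounded by the one for the original measures. Alternatively, one can use the convexity of $t\log t$ and the conditional expectation of $\dd\alpha/\dd\beta$ given $\sigma(F)$.

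The only technical care needed is that the $T_i$ exist and are measurable. This holds because $\widetilde\mu_i\ll\vol_{M_i}$, which is exactly the hypothesis under which the shadow was defined, via Brenier--McCann on the torus. I do not expect any real obstacle beyond this.
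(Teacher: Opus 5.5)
Your proposal is correct and follows essentially the argument behind the citation: the paper gives no proof of its own and refers to \cite[Lemma~3.2]{eckstein2022quantitative}, where the \(W_2\) identity is proved by the same coupling/projection argument and the entropy bound by the data processing inequality. There it is applied to the product Markov kernel built from optimal couplings, of which your deterministic map \((T_1,T_2)\), with \((T_1,T_2)_\#(\widetilde\mu_1\otimes\widetilde\mu_2)=\mu_1\otimes\mu_2\), is the special case used in this paper. (On the torus, read \(|x_i-T_i(x_i)|\) as the geodesic distance; the flat product metric gives \(\dist_\Omega^2=\dist_{M_1}^2+\dist_{M_2}^2\), which is the splitting your argument uses.)
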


We denote by \(\gamma_{\varepsilon,R}\) the shadow of \(\widetilde\gamma_{\varepsilon,R}\). By \Cref{prop: bound entropy shadow} and \eqref{eq: normal-perturbation-W2}, \(W_2(\gamma_{\varepsilon,R},\widetilde\gamma_{\varepsilon,R})\leq\sqrt{2\varepsilon}\,R\), so that \(\gamma_{\varepsilon,R}\) still converges to \(\overline\gamma\). However, to remain a recovery sequence, it must also have the same limit energy. It turns out that passing to the shadow adds at most an energy term.

\begin{proposition}[Entropy of the shadow]
\label{prop: control of the shadow entropy}
For every \(R\geq1\) and \(\varepsilon>0\) with \(\sqrt\varepsilon R<r_0\),
\begin{equation}
\label{eq: shadow-entropy-stability}
 \entropy\left(\gamma_{\varepsilon,R}\,|\,\gibbs\right)
\leq\entropy(\widetilde\gamma_{\varepsilon,R}\,|\,\gibbs)
 +\frac1\varepsilon\int_\Omega E\,
 \dd(\gamma_{\varepsilon,R}-\widetilde\gamma_{\varepsilon,R}).
\end{equation}
\end{proposition}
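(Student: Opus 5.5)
The plan is to reduce everything to entropies relative to the product of marginals and then use the shadow entropy bound of \Cref{prop: bound entropy shadow}. Write \(\widetilde\gamma=\widetilde\gamma_{\varepsilon,R}\) and \(\gamma=\gamma_{\varepsilon,R}\), with \(\widetilde\mu_i\) the marginals of \(\widetilde\gamma\).

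First, since \(\gamma\in\Pi(\mu_1,\mu_2)\), the computation in the proof of \Cref{prop: EOT-Gibbs} gives
\[
\entropy(\gamma\,|\,\gibbs)=\entropy(\gamma\,|\,\mu_1\otimes\mu_2)+\frac1\varepsilon\int_\Omega E\dd\gamma+\log Z_\varepsilon .
\]
This step only uses the explicit log-density \(-E/\varepsilon-\log Z_\varepsilon\) of \(\gibbs\) with respect to \(\mu_1\otimes\mu_2\), and not the marginal constraint. The same identity therefore holds for \(\widetilde\gamma\):
\[
\entropy(\widetilde\gamma\,|\,\gibbs)=\entropy(\widetilde\gamma\,|\,\mu_1\otimes\mu_2)+\frac1\varepsilon\int_\Omega E\dd\widetilde\gamma+\log Z_\varepsilon .
\]
Both sides are finite, because \(\widetilde\gamma\) has a density with respect to \(\vol_\Omega\) by \eqref{eq: normal-recovery-density}.

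Subtracting the two identities reduces \eqref{eq: shadow-entropy-stability} to the inequality
\[
\entropy(\gamma\,|\,\mu_1\otimes\mu_2)\le\entropy(\widetilde\gamma\,|\,\mu_1\otimes\mu_2).
\]

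To prove it, I use the chain rule for relative entropy. Since \(\widetilde\mu_i\ll\mu_i\) (recall \(\rho_i\ge\alpha\)), we have
\[
\entropy(\widetilde\gamma\,|\,\mu_1\otimes\mu_2)=\entropy(\widetilde\gamma\,|\,\widetilde\mu_1\otimes\widetilde\mu_2)+\entropy(\widetilde\mu_1\,|\,\mu_1)+\entropy(\widetilde\mu_2\,|\,\mu_2).
\]
This follows by splitting \(\log\frac{\dd\widetilde\gamma}{\dd(\mu_1\otimes\mu_2)}=\log\frac{\dd\widetilde\gamma}{\dd(\widetilde\mu_1\otimes\widetilde\mu_2)}+\log\frac{\dd\widetilde\mu_1}{\dd\mu_1}(x_1)+\log\frac{\dd\widetilde\mu_2}{\dd\mu_2}(x_2)\) and integrating against \(\widetilde\gamma\), whose marginals are \(\widetilde\mu_i\). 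Both marginal entropies are nonnegative. Hence, by \Cref{prop: bound entropy shadow},
\[
\entropy(\gamma\,|\,\mu_1\otimes\mu_2)\le\entropy(\widetilde\gamma\,|\,\widetilde\mu_1\otimes\widetilde\mu_2)\le\entropy(\widetilde\gamma\,|\,\mu_1\otimes\mu_2),
\]
which concludes the proof.

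The main obstacle is justifying the chain rule when some terms could be infinite. Here I would check that \(\widetilde\mu_i\) has a bounded density for fixed \(\varepsilon\) and \(R\). This holds because \(\widetilde p_{\varepsilon,R}\) has \(\overline\rho\) in its numerator, and \(\overline\rho\) is only \(L\log L\)-integrable. If boundedness fails, it suffices that \(\entropy(\widetilde\gamma\,|\,\mu_1\otimes\mu_2)\) is finite, which follows from \Cref{lemma: convergence entropy} and the identity above. Given that, the decomposition holds with all terms in \([0,\infty)\): the integrand splits, and \(\entropy(\widetilde\gamma\,|\,\widetilde\mu_1\otimes\widetilde\mu_2)\) is bounded by the full entropy, by the data-processing/variational characterization.
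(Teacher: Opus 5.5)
Your proof is correct and follows essentially the same route as the paper. You pass from \(\gibbs\) to \(\mu_1\otimes\mu_2\) for both plans, apply the shadow bound of \Cref{prop: bound entropy shadow}, and use the chain rule to get \(\entropy(\widetilde\gamma\,|\,\widetilde\mu_1\otimes\widetilde\mu_2)\le\entropy(\widetilde\gamma\,|\,\mu_1\otimes\mu_2)\), since the marginal entropies \(\entropy(\widetilde\mu_i\,|\,\mu_i)\) are nonnegative.

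One correction: finiteness of \(\entropy(\widetilde\gamma\,|\,\mu_1\otimes\mu_2)\) does not follow from \(\widetilde\gamma\) merely having a density, and boundedness of the marginal densities is not needed. As in the paper, it follows from the finiteness of \(\entropy(\widetilde\gamma\,|\,\gibbs)\) given by \Cref{lemma: convergence entropy}, together with \(E\geq0\).
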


\begin{proof}
In this proof, we write \(\gamma=\gamma_{\varepsilon,R}\), \(\widetilde\gamma=\widetilde\gamma_{\varepsilon,R}\) and \(\widetilde\mu_i=\widetilde\mu_{i,\varepsilon,R}\). Recall that \(\mathbb P_\varepsilon=f_\varepsilon\,\mu^\otimes\), where \(f_\varepsilon\defeq Z_\varepsilon^{-1}e^{-E/\varepsilon}\) (\Cref{prop: EOT-Gibbs}).
By \Cref{lemma: convergence entropy}, \(\entropy(\widetilde\gamma\,|\,\gibbs)<\infty\), hence \(\entropy(\widetilde\gamma\,|\,\mu^\otimes)\le\entropy(\widetilde\gamma\,|\,\gibbs)-\log Z_\varepsilon<\infty\) since \(E\geq0\).
Hence the marginal entropies \(\entropy(\widetilde\mu_i\,|\,\mu_i)\) are finite by projection, and so are all the
quantities below. By a change of reference measure, \(\entropy(\cdot\,|\,\gibbs)=\entropy(\cdot\,|\,\mu^\otimes)-\int_\Omega\log f_\varepsilon\,\dd(\cdot)\) on probability measures. Together with \Cref{prop: bound entropy shadow}, this gives
\begin{align*}
\entropy\left(\gamma\,|\,\gibbs\right)
 &=\entropy\left(\gamma\,|\,\mu^\otimes\right)
 -\int_\Omega\log f_\varepsilon\,\dd\gamma
 \\
 &\leq\entropy\left(\widetilde\gamma\,|\,\widetilde\mu_1\otimes\widetilde\mu_2\right)
 -\int_\Omega\log f_\varepsilon\,\dd\gamma\\
 &=\entropy\left(\widetilde\gamma\,|\,\gibbs\right)
 +\int_\Omega\log f_\varepsilon\,
 \dd(\widetilde\gamma-\gamma)
 \\
 &\qquad+\entropy\left(\widetilde\gamma\,|\,\widetilde\mu_1\otimes\widetilde\mu_2\right) -\entropy\left(\widetilde\gamma\,|\,\mu^\otimes\right).
\end{align*}
Since \(\widetilde\mu_i\ll\mu_i\) (\Cref{subsec: normal-recovery}), there exist
Borel functions \(a_i:M_i\to[0,\infty)\) such that
\(\widetilde\mu_i=a_i\,\mu_i\), so that
\(\widetilde\mu_1\otimes\widetilde\mu_2=a_1(x_1)a_2(x_2)\,\mu^\otimes\).
Consequently,
\begin{align*}
 \entropy\left(\widetilde\gamma\,|\,\widetilde\mu_1\otimes\widetilde\mu_2\right)-\entropy\left(\widetilde\gamma\,|\,\mu^\otimes\right)
 &=-\int_\Omega\log\bigl(a_1(x_1)a_2(x_2)\bigr)\dd\widetilde\gamma(x_1,x_2)
 \\
 &=-\sum_{i=1}^2\int_{M_i}\log a_i\,\dd\widetilde\mu_i
 =-\sum_{i=1}^2\entropy\left(\widetilde\mu_i\,|\,\mu_i\right)\leq0,
\end{align*}
where the last inequality follows from the nonnegativity of the relative
entropy between probability measures. Finally, since \(\log f_\varepsilon=-E/\varepsilon-\log Z_\varepsilon\) and both plans have total mass one,
\[
 \entropy\left(\gamma\,|\,\gibbs \right)
\leq\entropy\left(\widetilde\gamma\,|\,\gibbs\right)
 -\int_\Omega\log f_\varepsilon\,
 \dd(\gamma-\widetilde\gamma)
=\entropy\left(\widetilde\gamma\,|\,\gibbs\right)
 +\frac1\varepsilon\int_\Omega E\,
 \dd(\gamma-\widetilde\gamma).
\]
\end{proof}

The additional term in \eqref{eq: shadow-entropy-stability} tends to \(0\) as soon as \(W_2(\widetilde\mu_{i,\varepsilon,R},\mu_i)=o(\sqrt\varepsilon)\) (\Cref{prop: shadow-energy} below). The following estimate is therefore the cornerstone of the construction of the recovery sequence.
\begin{theorem}[Marginal estimate]
\label{thm: Wasserstein between the marginals}
For each fixed \(R\geq1\), the marginals of the normal perturbation
\eqref{eq: recovery sequence with wrong marginals 1} satisfy
\begin{equation}
\label{eq: marginal-cancellation}
W_2(\widetilde\mu_{i,\varepsilon,R},\mu_i)=o_R(\sqrt\varepsilon)
 \qquad (i=1,2),\quad\text{as }\varepsilon\to0,
\end{equation}
where the subscript indicates that the rate of convergence may depend on \(R\).
\end{theorem}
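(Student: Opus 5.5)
The plan is to bound \(W_2(\widetilde\mu_{i,\varepsilon,R},\mu_i)\) through a dual negative Sobolev norm, and to obtain the gain \(o(\sqrt\varepsilon)\) from the centering of the kernels \(K_R(z,\cdot)\). I treat \(i=1\); the case \(i=2\) is symmetric. Write \(\pi\defeq K_R(z,s)\dd s\,\dd\overline\gamma(z)\) on \(\Sigma\times\ball_R\) and \(w(z,s)\defeq(n(z)s)_1\in\R^d\) for the first component of the normal displacement. Set \(x_t(z,s)\defeq z_1+t\sqrt\varepsilon\,w(z,s)\), where \(z_1=\proj_{M_1}(z)\). Then \(\widetilde\mu_1=(x_1)_\#\pi\) and \(\mu_1=(x_0)_\#\pi\). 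For every smooth \(f\) on \(\T^d\),
\[
\int f\dd(\widetilde\mu_1-\mu_1)=\sqrt\varepsilon\int_0^1\!\!\int\nabla f(x_t)\cdot w\dd\pi\dd t=\int\nabla f\cdot\dd j_\varepsilon,
\qquad
j_\varepsilon\defeq\sqrt\varepsilon\int_0^1(x_t)_\#(w\,\pi)\dd t .
\]
So \(j_\varepsilon\) is a flux with \(-\operatorname{div}j_\varepsilon=\widetilde\mu_1-\mu_1\). I would then use Peyre's inequality \(W_2(\nu,\mu)\leq2\|\nu-\mu\|_{\dot H^{-1}(\mu)}\), together with \(\rho_1\geq\alpha\) from \ref{H4}, to get
\[
W_2(\widetilde\mu_1,\mu_1)\leq\frac{2}{\sqrt\alpha}\,\|j_\varepsilon\|_{L^2(\T^d)} .
\]
It therefore suffices to show that \(\varepsilon^{-1/2}\|j_\varepsilon\|_{L^2}\to0\).

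The key cancellation is that \((x_0)_\#(w\,\pi)=0\). Indeed \(x_0\) depends only on \(z\), and \(\int w(z,s)K_R(z,s)\dd s=0\) because the truncated Gaussian is symmetric. Hence
\[
\varepsilon^{-1/2}j_\varepsilon=\int_0^1\Bigl((x_t)_\#(w\pi)-(x_0)_\#(w\pi)\Bigr)\dd t .
\]
This is a difference of the same vector measure with the base point shifted by at most \(\sqrt\varepsilon R\). To compute densities, I would disintegrate over \(M_1\) using \Cref{prop: twist-geometry}: \(\proj^\Sigma_{M_1}\) is a local \(C^{1,1}\) diffeomorphism and, by compactness, finite-to-one with a uniformly bounded number of branches. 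On each branch, \((x_t)_\#(w\pi)\) is the image of a vector density \(g(x_1,s)=\overline\rho(z)K_R(z,s)w(z,s)J(z)\) under the map \(x_1\mapsto x_1+t\sqrt\varepsilon w\). Because \(s\) ranges over a bounded set, this map is a bi-Lipschitz perturbation of the identity, uniformly in \(s\), with Jacobian \(1+O(\sqrt\varepsilon R)\). If \(\overline\rho\in L^2(\mathcal H^d_{|\Sigma})\), then \(g(\cdot,s)\in L^2\), and continuity of translations and small bi-Lipschitz reparametrizations in \(L^2\) gives
\[
\bigl\|(x_t)_\#(w\pi)-(x_0)_\#(w\pi)\bigr\|_{L^2}\to0
\]
uniformly in \(t\). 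The proof of this step is to approximate \(g\) by continuous functions and use dominated convergence in \(s\). This yields \(\|j_\varepsilon\|_{L^2}=o_R(\sqrt\varepsilon)\).

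The main obstacle is that \(\overline\gamma\) is only known to have finite entropy with respect to \(\mathbb P_0\), so \(\overline\rho\) need not be in \(L^2\). I would first try to reduce to that case by truncation and the convexity of \(W_2^2\) under mixtures. Write \(\overline\gamma=(1-m)\gamma_b+m\gamma_r\), where \(\gamma_b\) has bounded density (a normalized \(\min(\overline\rho,M)\)) and \(m\to0\) as \(M\to\infty\). Since the normal perturbation is linear in \(\overline\gamma\), the marginals split accordingly, and
\[
W_2^2(\widetilde\mu_1,\mu_1)\leq(1-m)W_2^2(\widetilde a,a)+m\,W_2^2(\widetilde b,b)\leq(1-m)W_2^2(\widetilde a,a)+m\varepsilon R^2,
\]
using \eqref{eq: normal-perturbation-W2} for the remainder. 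Here \(a,b\) are the first marginals of \(\gamma_b,\gamma_r\), and \(\widetilde a,\widetilde b\) those of their normal perturbations. The delicate point is that the reference \(a\) need not have density bounded below, so Peyre's inequality does not apply to it directly. I would handle this either by mixing \(\gamma_b\) with a small multiple of \(\mathbb P_0\), whose first marginal has density bounded below, or by using the version of the \(\dot H^{-1}\) bound that interpolates between \(a\) and \(\widetilde a\), whose density is close to that of \(a\). Once that is settled, \(\limsup_{\varepsilon\to0}\varepsilon^{-1}W_2^2(\widetilde\mu_1,\mu_1)\leq mR^2\) for every \(m>0\), and letting \(m\to0\) gives \eqref{eq: marginal-cancellation}.
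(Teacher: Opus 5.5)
Your argument is correct and takes a genuinely different route from the paper. However, the step you single out as the main obstacle is not one. The marginal constraint already bounds \(\overline\rho\), because the partial marginal carried by any branch is dominated by \(\mu_1\). With the inverse charts \(\alpha_j\) and the partition of unity \((\chi_j)\), the branch density \(\rho_j=(\chi_j\overline\rho)\circ\alpha_j\,\mathcal J_{\alpha_j}\) is the density of \((\proj_{M_1})_\#(\chi_j\overline\gamma)\leq(\proj_{M_1})_\#\overline\gamma=\mu_1\), so \(\rho_j\leq\rho_1\leq\beta\). Equivalently, as in Step~3 of the proof of Theorem~A, \(\overline\gamma(A)\leq\mu_1(\proj_{M_1}(A))\leq\beta\,\mathcal H^d(A)\), that is, \(\overline\rho\leq\beta\). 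Finite entropy with respect to \(\mathbb P_0\) is only needed to know that \(\overline\gamma\) is concentrated on \(\Sigma\).

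Hence your densities \(g(\cdot,s)\) are bounded, your \(L^2\) argument applies directly, and the truncation paragraph can be deleted. As written, that paragraph was indeed incomplete, for the reason you identify. If you ever need it, use the triangle inequality through the intermediate measure \((1-m)\widetilde a+mb\) instead of convexity. Peyre's inequality relative to \(\mu_1\) then handles \((1-m)(\widetilde a-a)\), whose flux is in \(L^2\), and \(W_2^2\bigl(\widetilde\mu_1,(1-m)\widetilde a+mb\bigr)\leq m\varepsilon R^2\).

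Both proofs rest on the same two ingredients. The first is the centering \(\int_{\ball_R}sK_R(z,s)\dd s=0\), which lets one subtract the unshifted flux branch by branch. The second is continuity of small bi-Lipschitz reparametrizations acting on the branch densities.

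The paper argues dynamically. It takes the curve \(\mu_{t,\varepsilon}\) of marginals of the perturbations with variance \(t^2\varepsilon\) and its time-dependent flux \(\mathfrak m_{t,\varepsilon}\). It shows \(\sup_t\|\mathfrak m_{t,\varepsilon}\|_{\mathrm{TV}}=o(\sqrt\varepsilon)\) using only \(L^1\) approximation of the \(\rho_j\). It then concludes by Benamou--Brenier with the pointwise bound \(|\mathfrak m_{t,\varepsilon}|\leq\sqrt\varepsilon R\,\mu_{t,\varepsilon}\), which gives \(W_2^2\leq\sqrt\varepsilon R\sup_t\|\mathfrak m_{t,\varepsilon}\|_{\mathrm{TV}}\).

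You replace this with a time-averaged flux, an \(L^2\) bound and Peyre's static \(\dot H^{-1}\) inequality. That is perfectly adequate on \(\T^d\), but it uses \(L^2\) fluxes and, crucially, a reference density bounded below on the whole region the flux occupies. The paper's bounded-velocity argument needs neither. This is why it transfers verbatim to the compactly supported Euclidean setting of \Cref{thm: euclidean-selection}. There \(\mu_i\) vanishes outside \(K_i\) while the flux spreads into a \(\sqrt\varepsilon R\)-neighborhood of \(K_i\), so \(\|\nabla f\|_{L^2(\mu_i)}\) no longer controls \(\int\nabla f\cdot\dd j_\varepsilon\). For the same reason it applies directly to partial marginals such as \(\mu^g\) in the multi-marginal argument.
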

The proof is given in \Cref{subsection: proof of the main result}.

We now verify that the additional term in \eqref{eq: shadow-entropy-stability} vanishes in the limit.

\begin{proposition}[Energy of the correction]
\label{prop: shadow-energy}
For every fixed \(R\geq1\),
\[
 \frac1\varepsilon\left|\int_\Omega E\,
 \dd(\gamma_{\varepsilon,R}-\widetilde\gamma_{\varepsilon,R})\right|
 \longrightarrow0
 \qquad\text{as }\varepsilon\to0.
\]
\end{proposition}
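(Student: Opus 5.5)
The approach is to write the difference as an integral along the coupling between the plan and its shadow, and to exploit that \(E\) vanishes to second order on \(\Sigma\). By \Cref{def: shadow}, \(\gamma_{\varepsilon,R}=(T_1,T_2)_\#\widetilde\gamma_{\varepsilon,R}\), so
\[
\int_\Omega E\,\dd(\gamma_{\varepsilon,R}-\widetilde\gamma_{\varepsilon,R})=\int_\Omega\bigl(E(T(\x))-E(\x)\bigr)\dd\widetilde\gamma_{\varepsilon,R}(\x),\qquad T\defeq(T_1,T_2).
\]
Since \(E\in C^{2,1}(\Omega)\) on the compact set \(\Omega\), \(\nabla E\) is Lipschitz. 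With \(\x=(x_1,x_2)\) we get
\[
|E(T(\x))-E(\x)|\leq|\nabla E(\x)|\,|T(\x)-\x|+C|T(\x)-\x|^2,
\]
where the torus displacement is taken along the minimizing geodesic, which is legitimate because optimal maps move points by at most half the diameter and \(E\) is smooth globally.

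Each term is controlled by integrating against \(\widetilde\gamma_{\varepsilon,R}\). For the quadratic term, \(\int|T(\x)-\x|^2\dd\widetilde\gamma_{\varepsilon,R}=\sum_i W_2(\widetilde\mu_{i,\varepsilon,R},\mu_i)^2\), because \(T_i\) is optimal from \(\widetilde\mu_{i,\varepsilon,R}\) to \(\mu_i\). By \Cref{thm: Wasserstein between the marginals} this is \(o_R(\varepsilon)\). For the linear term we use Cauchy--Schwarz:
\[
\int|\nabla E|\,|T-\id|\,\dd\widetilde\gamma_{\varepsilon,R}\leq\Bigl(\int|\nabla E|^2\dd\widetilde\gamma_{\varepsilon,R}\Bigr)^{1/2}\Bigl(\sum_i W_2(\widetilde\mu_{i,\varepsilon,R},\mu_i)^2\Bigr)^{1/2}.
\]
Since \(\nabla E=0\) on \(\Sigma\) and \(\nabla E\) is Lipschitz, \(|\nabla E(\x)|\leq C\dist(\x,\Sigma)\). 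The measure \(\widetilde\gamma_{\varepsilon,R}\) is supported in \(\Sigma_{\sqrt\varepsilon R}\), and by \eqref{eq: normal-distance} and \eqref{eq: gaussian-moments} we have \(\int\dist(\cdot,\Sigma)^2\dd\widetilde\gamma_{\varepsilon,R}=\varepsilon\int|s|^2K_R\dd s\dd\overline\gamma\leq M_2\varepsilon\). The first factor is therefore \(O(\sqrt\varepsilon)\) and the second is \(o_R(\sqrt\varepsilon)\), so the product is \(o_R(\varepsilon)\).

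Dividing by \(\varepsilon\) shows that both contributions tend to \(0\) for fixed \(R\), which proves the claim. The only point needing care is the Taylor step on the torus, which requires lifting \(T(\x)-\x\) to \(\R^{2d}\) by the shortest representative. All the substantive difficulty is already contained in \Cref{thm: Wasserstein between the marginals}. Given that estimate, the main obstacle here is just recognizing that the gradient term must be paired with the \(O(\sqrt\varepsilon)\) size of \(\nabla E\) on the support of \(\widetilde\gamma_{\varepsilon,R}\). A cruder Lipschitz bound \(|E(T\x)-E(\x)|\leq C|T\x-\x|\) would only give \(o(\sqrt\varepsilon)\), which is not enough.
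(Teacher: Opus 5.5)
Your proposal is correct and is essentially the paper's argument. Both use the push-forward identity, then the pointwise bound $|E(T\x)-E(\x)|\le C\,\dist(\x,T\x)\dist(\x,\Sigma)+C\dist(\x,T\x)^2$ (from $\nabla E$ being Lipschitz and vanishing on $\Sigma$, with the shortest lift to $\R^{2d}$ handling the torus), then Cauchy--Schwarz against $\int\dist(\cdot,\Sigma)^2\dd\widetilde\gamma_{\varepsilon,R}\le M_2\varepsilon$ and the marginal estimate $\sum_iW_2^2(\widetilde\mu_{i,\varepsilon,R},\mu_i)=o_R(\varepsilon)$. The only differences are cosmetic: you Taylor-expand at $\x$ where the paper integrates $\nabla\widetilde E$ along the segment using $\dist([\widetilde\x+\tau v_{\x}],\Sigma)\le\dist(\x,\Sigma)+\tau|v_{\x}|$, and your remark about optimal maps moving points by at most half the diameter is unnecessary, since the periodic lift of $E$ is globally $C^{1,1}$.
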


\begin{proof}
Let \(T_{\varepsilon,R}\defeq(T_1,T_2)\), where \(T_i\) is the optimal transport map from \(\widetilde\mu_{i,\varepsilon,R}\) to \(\mu_i\). By definition of the shadow,
\(\gamma_{\varepsilon,R}
 =(T_{\varepsilon,R})_\#\widetilde\gamma_{\varepsilon,R}\).
Set
\[
 \Delta_{\varepsilon,R}^2
 \defeq\int_\Omega \dist(\x,T_{\varepsilon,R}(\x))^2
 \,\dd\widetilde\gamma_{\varepsilon,R}(\x)
 =\sum_{i=1}^2W_2^2(\widetilde\mu_{i,\varepsilon,R},\mu_i).
\]
The pushforward identity gives
\[
 \frac1\varepsilon\int_\Omega E\,
 \dd(\gamma_{\varepsilon,R}-\widetilde\gamma_{\varepsilon,R})
 =\frac1\varepsilon\int_\Omega
 \bigl(E(T_{\varepsilon,R}(\x))-E(\x)\bigr)
 \,\dd\widetilde\gamma_{\varepsilon,R}(\x).
\]

We identify \(\Omega\) with \(\mathbb T^{2d}=\R^{2d}/\mathbb Z^{2d}\), denote by \([y]\in\Omega\) the class of \(y\in\R^{2d}\), and by \(\widetilde E\defeq E([\,\cdot\,])\) the \(\mathbb Z^{2d}\)-periodic lift of \(E\), which belongs to \(C^{2,1}(\R^{2d})\) by \ref{H2}. We set
\[
 C_E\defeq\sup_{y\in\R^{2d}}\|D^2\widetilde E(y)\|<\infty,
\]
where \(\|\cdot\|\) denotes the operator norm.

We first estimate the gradient by the distance to \(\Sigma\). Let \(y\in\R^{2d}\). Since \(\Sigma\) is compact, there exists \(\bar y\in\R^{2d}\) such that \([\bar y]\in\Sigma\) and \(|y-\bar y|=\dist([y],\Sigma)\). Since \(\nabla\widetilde E(\bar y)=0\), the fundamental theorem of calculus gives
\begin{equation}
\label{eq: gradient-distance}
 |\nabla\widetilde E(y)|
 =\left|\int_0^1 D^2\widetilde E\bigl(\bar y+t(y-\bar y)\bigr)(y-\bar y)\dd t\right|
 \leq C_E\dist([y],\Sigma).
\end{equation}
Now, for each \(\x\in\Omega\), choose \(\widetilde\x\in\R^{2d}\) with \([\widetilde\x]=\x\), and a shortest displacement \(v_{\x}\in\mathbb R^{2d}\) such that
\[
 [\widetilde\x+v_{\x}]=T_{\varepsilon,R}(\x),
 \qquad |v_{\x}|=\dist(\x,T_{\varepsilon,R}(\x)).
\]
By the fundamental theorem of calculus,
\[
 E(T_{\varepsilon,R}(\x))-E(\x)
 =\widetilde E(\widetilde\x+v_{\x})-\widetilde E(\widetilde\x)
 =\int_0^1\bigl\langle\nabla\widetilde E(\widetilde\x+\tau v_{\x}),
                         v_{\x}\bigr\rangle\,\dd\tau.
\]
For every \(\tau\in[0,1]\), the triangle inequality gives \(\dist([\widetilde\x+\tau v_{\x}],\Sigma)\leq\dist(\x,\Sigma)+\tau|v_{\x}|\). Together with \eqref{eq: gradient-distance}, we obtain
\begin{equation}
\label{eq: energy-difference-gradient}
 |E(T_{\varepsilon,R}(\x))-E(\x)|
 \leq C_E \dist(\x,T_{\varepsilon,R}(\x))
              \dist(\x,\Sigma)
 +C_E \dist(\x,T_{\varepsilon,R}(\x))^2.
\end{equation}
Integrating \eqref{eq: energy-difference-gradient} with respect to \(\widetilde\gamma_{\varepsilon,R}\) and applying the Cauchy--Schwarz inequality gives
\begin{align*}
 \frac1\varepsilon\left|\int_\Omega E\,
 \dd(\gamma_{\varepsilon,R}-\widetilde\gamma_{\varepsilon,R})\right|
 &\leq\frac{C_E\Delta_{\varepsilon,R}}{\varepsilon}
 \left(\int_\Omega \dist(\x,\Sigma)^2
 \,\dd\widetilde\gamma_{\varepsilon,R}(\x)\right)^{1/2}
 \\
 &\qquad+\frac{C_E\Delta_{\varepsilon,R}^2}{\varepsilon}.
\end{align*}
Recalling the definition \eqref{eq: recovery sequence with wrong marginals 1} of \(\widetilde\gamma_{\varepsilon,R}\) and using \eqref{eq: normal-distance}, we have
\[
 \int_\Omega \dist(\x,\Sigma)^2
 \,\dd\widetilde\gamma_{\varepsilon,R}(\x)
 =\varepsilon\int_\Sigma
 \int_{\ball_R}|s|^2
 K_R(z,s)\dd s\,\overline\gamma(\dd z)
 \leq M_2\,\varepsilon,
\]
by \eqref{eq: gaussian-moments}.
Hence
\begin{equation}
\label{eq: shadow-energy-diagonal}
 \frac1\varepsilon\left|\int_\Omega E\,
 \dd(\gamma_{\varepsilon,R}-\widetilde\gamma_{\varepsilon,R})\right|
 \leq C\left(\frac{\Delta_{\varepsilon,R}}{\sqrt\varepsilon}
            +\frac{\Delta_{\varepsilon,R}^2}{\varepsilon}\right).
\end{equation}
For fixed \(R\), \Cref{thm: Wasserstein between the marginals}
gives \(\Delta_{\varepsilon,R}=o_R(\sqrt\varepsilon)\), so that
both terms on the right-hand side tend to zero.
\end{proof}

\subsection{Proof of Theorem~A}
\label{subsec: selection-value-asymptotics}

\begin{proof}[Proof of {\hyperref[thm: A]{Theorem~A}}]
\textbf{Step 1. The \(\Gamma\)-\(\liminf\) inequality \eqref{eq: gamma-liminf}.} Let \(\gamma_\varepsilon\rightharpoonup\overline\gamma\). If \(\overline\gamma\notin\Pi(\mu_1,\mu_2)\), then, since \(\Pi(\mu_1,\mu_2)\) is closed, \(\gamma_\varepsilon\notin\Pi(\mu_1,\mu_2)\) for \(\varepsilon\) small enough, so that \(\mathcal G_\varepsilon(\gamma_\varepsilon)=+\infty\). Otherwise, the joint lower semicontinuity of the relative entropy, together with the convergence \(\mathbb P_\varepsilon\rightharpoonup\mathbb P_0\) given by \Cref{thm: convergence-Gibbs-measure}, yields
\[
\liminf_{\varepsilon\to0}\mathcal G_\varepsilon(\gamma_\varepsilon)
\geq\liminf_{\varepsilon\to0} \entropy\left( \gamma_\varepsilon \,\middle\vert\, \mathbb{P}_\varepsilon \right) \geq \entropy\left( \overline \gamma \,\middle\vert\, \mathbb{P}_0 \right)=\mathcal G_0(\overline\gamma).
\]

\textbf{Step 2. The \(\Gamma\)-\(\limsup\) inequality \eqref{eq: gamma-limsup}.} Let \(\overline\gamma\in\mathcal P(\Omega)\) with \(\mathcal G_0(\overline\gamma)<\infty\) (otherwise there is nothing to prove). For \(R\geq1\) and \(\varepsilon>0\) with \(\sqrt\varepsilon R<r_0\), let \(\widetilde\gamma_{\varepsilon,R}\) be the normal perturbation of \(\overline\gamma\) defined in \eqref{eq: recovery sequence with wrong marginals 1}, and \(\gamma_{\varepsilon,R}\in\Pi(\mu_1,\mu_2)\) its shadow. We first fix \(R\). By \Cref{lemma: convergence entropy}, \Cref{prop: control of the shadow entropy} and \Cref{prop: shadow-energy},
\[
 \limsup_{\varepsilon\to0}\mathcal G_\varepsilon(\gamma_{\varepsilon,R})
 =\limsup_{\varepsilon\to0}\entropy\left(\gamma_{\varepsilon,R}\,|\,\gibbs\right)
 \leq\mathcal G_0(\overline\gamma)+b_R(\overline\gamma),
\]
while \Cref{prop: bound entropy shadow} and \eqref{eq: normal-perturbation-W2} give
\[
W_2(\gamma_{\varepsilon,R},\overline\gamma)
\leq W_2(\gamma_{\varepsilon,R},\widetilde\gamma_{\varepsilon,R})+W_2(\widetilde\gamma_{\varepsilon,R},\overline\gamma)
\leq(\sqrt2+1)\sqrt\varepsilon R
\longrightarrow0
\qquad(\varepsilon\to0),
\]
so that \(\gamma_{\varepsilon,R}\rightharpoonup\overline\gamma\), since \(W_2\) metrizes the weak convergence on the compact set \(\Omega\). Hence \((\gamma_{\varepsilon,R})_{\varepsilon}\) is a recovery sequence for \(\overline\gamma\) up to the error \(b_R(\overline\gamma)\), which tends to \(0\) as \(R\to\infty\) by \Cref{lemma: convergence entropy}. As is standard in \(\Gamma\)-convergence, this suffices to obtain a recovery sequence by a diagonal argument (see, e.g., \cite{dalmaso1993introduction}), which we recall for completeness. Choose \(R_j\uparrow\infty\) and, inductively, \(\varepsilon_j\downarrow0\) such that, for every \(0<\varepsilon\leq\varepsilon_j\), we have \(\sqrt\varepsilon R_j<r_0\),
\(W_2(\gamma_{\varepsilon,R_j},\overline\gamma)\leq1/j\), and
\(\mathcal G_\varepsilon(\gamma_{\varepsilon,R_j})\leq\mathcal G_0(\overline\gamma)+b_{R_j}(\overline\gamma)+1/j\).
Then \(\gamma_\varepsilon\defeq\gamma_{\varepsilon,R_j}\) for \(\varepsilon_{j+1}<\varepsilon\leq\varepsilon_j\) defines a recovery sequence for \(\overline\gamma\).

\textbf{Step 3. Conclusion.}
By \eqref{eq: G-vs-F}, \eqref{eq: gamma-expansion} follows from
the \(\Gamma\)-convergence of \(\mathcal G_\varepsilon\) to \(\mathcal G_0\) established in Steps~1 and~2, since adding to \(\mathcal G_\varepsilon\) constants converging as \(\varepsilon\to0\) does not affect \(\Gamma\)-convergence.
The infimum of \(\mathcal F^{(1)}\) is finite. Indeed, since relative entropies are nonnegative, \(\mathcal F^{(1)}\geq-\log Z_0-\frac k2\log(2\pi)\), and it suffices to show that every optimal transport plan \(\overline\gamma\) satisfies \(\entropy(\overline\gamma\,|\,\mathbb P_0)<\infty\). By \eqref{eq: optimality-support}, such a plan satisfies \(\overline\gamma(\Omega\setminus\Sigma)=0\). For every Borel set \(A\subseteq\Sigma\), since \(\proj_{M_1}\) is \(1\)-Lipschitz, we have
\[
\overline\gamma(A)
\leq\mu_1\bigl(\proj_{M_1}(A)\bigr)
\leq\beta\,\mathcal H^d\bigl(\proj_{M_1}(A)\bigr)
\leq\beta\,\mathcal H^d(A),
\]
so that \(\overline\gamma\leq\beta\,\mathcal H^d_{|\Sigma}\). Since the density of \(\mathbb P_0\) with respect to \(\mathcal H^d_{|\Sigma}\) is bounded from below by a positive constant, by \ref{H4} and the continuity of \(\hess^\perp E\) on the compact set \(\Sigma\), the density of \(\overline\gamma\) with respect to \(\mathbb P_0\) is bounded, hence \(\entropy(\overline\gamma\,|\,\mathbb P_0)<\infty\).
Compactness and convergence of minima under \(\Gamma\)-convergence
then give \eqref{eq: energy-expansion}.

For (iii), note that, by \Cref{prop: EOT-Gibbs}, \(\mathcal F_\varepsilon\) and \(\mathcal G_\varepsilon\) have the same minimizers, and so do \(\mathcal F^{(1)}\) and \(\mathcal G_0\). Since \(\mathcal P(\Omega)\) is weakly compact and the functionals \(\mathcal G_\varepsilon\), \(\mathcal G_0\) are lower semicontinuous, strictly convex on their domains and not identically \(+\infty\), they admit unique minimizers \(\gamma_\varepsilon\) and \(\gamma_0\). By the fundamental theorem of \(\Gamma\)-convergence (see, e.g., \cite{dalmaso1993introduction}), every cluster point of \((\gamma_\varepsilon)_{\varepsilon>0}\) as \(\varepsilon\to0\) minimizes \(\mathcal G_0\), hence \(\gamma_\varepsilon\rightharpoonup\gamma_0\). Finally, since \(\mathcal G_0(\gamma_0)<\infty\), we have \(\gamma_0\ll\mathbb P_0\), so that \(\spt(\gamma_0)\subseteq\Sigma\) and \(\gamma_0\) is an optimal transport plan by \eqref{eq: optimality-support}, hence the unique minimizer of \(\entropy(\cdot\,|\,\mathbb P_0)\) among all optimal transport plans.
\end{proof}

\subsection[Proof of the marginal estimate]{Proof of \Cref{thm: Wasserstein between the marginals}}
\label{subsection: proof of the main result}
Throughout this section, \(R\geq1\) is fixed, and we write \(K=K_R\), \(\widetilde\gamma_\varepsilon=\widetilde\gamma_{\varepsilon,R}\) and \(\widetilde\mu_{i,\varepsilon}=\widetilde\mu_{i,\varepsilon,R}\). Recall that, by \eqref{eq: recovery sequence with wrong marginals 1}, the kernel \(K\) does not depend on \(\varepsilon\).

Let us first explain where the difficulty lies. If \(\Sigma\) is flat and \(\hess^\perp E\) is constant along \(\Sigma\), the normal directions and the kernel \(K(z,\cdot)\) do not depend on \(z\), so that \(\widetilde\gamma_\varepsilon\) is the convolution of \(\overline\gamma\) with a fixed truncated Gaussian of variance of order \(\varepsilon\), supported on the normal space. Each marginal \(\widetilde\mu_{i,\varepsilon}\) is then the convolution of \(\mu_i\) with the projection \(\eta_\varepsilon\) of this Gaussian, which is a centered kernel of variance of order \(\varepsilon\). For a general probability measure \(\mu\), \(W_2(\mu,\mu*\eta_\varepsilon)=O(\sqrt\varepsilon)\), which is not enough for our purpose, since we need \(o(\sqrt\varepsilon)\). This improves to \(O(\varepsilon^{(1+\theta)/2})\) if \(\mu\) has a \(C^{1,\theta}\) density bounded from below, with \(\theta\in(0,1]\), the first-order contributions cancelling out by symmetry. In general, however, \(\Sigma\) is not flat, and several branches of \(\Sigma\), that is, pieces of \(\Sigma\) on which the projection onto \(M_i\) is a diffeomorphism, may lie above the same point of \(M_i\). The first issue is mild, since \(\Sigma\) is locally almost flat. The second one is more serious: \(\widetilde\mu_{i,\varepsilon}\) is the sum of the contributions of the branches, each obtained by regularizing the partial marginal of \(\overline\gamma\) carried by the branch, and, although these partial marginals sum up to \(\mu_i\), their densities need not inherit the regularity of \(\mu_i\): they are merely \(L^1\). We therefore need to handle \(L^1\) densities regularized along a non-flat manifold. To explain the strategy, we first consider the flat case with one-dimensional marginals, and show in \Cref{subsubsec: 1D} that \(W_2(\mu,\mu*\eta_\varepsilon)=o(\sqrt\varepsilon)\) as soon as \(\mu\) has an \(L^1\) density. The argument relies again on the symmetry of the Gaussian kernel.

\subsubsection{A model case: one-dimensional marginals and flat contact set}
\label{subsubsec: 1D}
To illustrate the proof of \Cref{thm: Wasserstein between the marginals}, we first consider one-dimensional marginals and a flat contact set. As explained above, the marginals of the normal perturbation are then convolutions of the original marginals with a fixed truncated Gaussian kernel, and we estimate the Wasserstein distance between a measure \(\mu\) on \(\R\) and its convolution with such a kernel, assuming only that \(\mu\) is absolutely continuous, that is, has an \(L^1\) density. A similar approach was introduced in one dimension in \cite{colombo2025first}. Here we adapt and clarify it before extending it to higher dimensions.
\begin{proposition}[One-dimensional flat case]
\label{prop: 1D}
Let \(\mu\in\mathcal P(\R)\) be absolutely continuous with respect to the Lebesgue measure, with finite second moment. For \(R>0\) and \(\varepsilon>0\), let
\[
\eta_\varepsilon(x)\defeq\frac{1}{\sqrt\varepsilon\,c_R}\,e^{-x^2/(2\varepsilon)}\,\mathbf1_{\{|x|\leq\sqrt\varepsilon R\}},
\qquad
c_R\defeq\int_{-R}^Re^{-z^2/2}\dd z.
\]
Then \(\widetilde\mu_\varepsilon\defeq\mu*\eta_\varepsilon\) satisfies
\[
W_2(\mu,\widetilde\mu_\varepsilon)=\sqrt R\,o(\sqrt\varepsilon)\qquad(\varepsilon\to0),
\]
where the \(o(\sqrt\varepsilon)\) depends only on \(\mu\), and not on \(R\).
\end{proposition}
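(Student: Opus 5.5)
The plan is to use the explicit one-dimensional description of \(W_2\) through cumulative distribution functions, and to reduce the claim to an \(L^1\) estimate on \(F_\varepsilon-F\). Here \(F\) and \(F_\varepsilon\) are the distribution functions of \(\mu\) and \(\widetilde\mu_\varepsilon\), and \(\rho\) is the density of \(\mu\). Note that \(\widetilde\mu_\varepsilon\) has finite second moment because \(\eta_\varepsilon\) is compactly supported.

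\textbf{Step 1: interpolate \(W_2\) between \(W_\infty\) and \(W_1\).} Since \(\eta_\varepsilon\) is supported in \([-\sqrt\varepsilon R,\sqrt\varepsilon R]\), we have
\[
F(x-\sqrt\varepsilon R)\le F_\varepsilon(x)\le F(x+\sqrt\varepsilon R).
\]
Hence the quantile functions satisfy \(|F^{-1}(t)-F_\varepsilon^{-1}(t)|\le\sqrt\varepsilon R\) for all \(t\in(0,1)\). The monotone (optimal) coupling therefore moves mass by at most \(\sqrt\varepsilon R\), and
\[
W_2^2(\mu,\widetilde\mu_\varepsilon)=\int_0^1|F^{-1}-F_\varepsilon^{-1}|^2\dd t\le\sqrt\varepsilon R\int_0^1|F^{-1}-F_\varepsilon^{-1}|\dd t=\sqrt\varepsilon R\,W_1(\mu,\widetilde\mu_\varepsilon).
\]
Using \(W_1=\int_\R|F-F_\varepsilon|\dd x\), it therefore suffices to show \(\int_\R|F_\varepsilon-F|\dd x=o(\sqrt\varepsilon)\), with a rate independent of \(R\). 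This is exactly where the factor \(\sqrt R\) in the statement comes from.

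\textbf{Step 2: symmetrize using the kernel.} Since \(\eta_\varepsilon\) is even,
\[
F_\varepsilon(x)-F(x)=\int_0^\infty\bigl(F(x-y)+F(x+y)-2F(x)\bigr)\eta_\varepsilon(y)\dd y .
\]
Write the bracket as \(\int_0^y\bigl(\rho(x+t)-\rho(x-t)\bigr)\dd t\) and integrate in \(x\) with Fubini. This gives
\[
\int_\R\bigl|F(x-y)+F(x+y)-2F(x)\bigr|\dd x\le y\,\omega(2y),\qquad \omega(h)\defeq\sup_{|\tau|\le h}\|\rho(\cdot+\tau)-\rho\|_{L^1}.
\]
Here \(\omega(h)\to0\) as \(h\to0\) by continuity of translations in \(L^1\), and \(\omega\le2\). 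This is the step where the first-order terms cancel by symmetry. A naive estimate would only give \(O(\sqrt\varepsilon)\); the symmetric pairing of \(x-y\) and \(x+y\) upgrades this to \(y\,\omega(2y)\) using only \(\rho\in L^1\).

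\textbf{Step 3: make the rate uniform in \(R\).} Substituting \(y=\sqrt\varepsilon z\) gives
\[
\int_\R|F_\varepsilon-F|\dd x\le\frac{\sqrt\varepsilon}{c_R}\int_0^{R}z\,\omega(2\sqrt\varepsilon z)e^{-z^2/2}\dd z\le\frac{\sqrt\varepsilon}{c_1}\int_0^\infty z\,\omega(2\sqrt\varepsilon z)e^{-z^2/2}\dd z ,
\]
using \(c_R\ge c_1\) for \(R\ge1\). For \(R<1\) I would instead just use \(\omega\le2\) and the bound on the first absolute moment of \(\eta_\varepsilon\), or restrict to \(R\geq1\) as in the application. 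The last integral tends to \(0\) by dominated convergence, with integrable majorant \(2ze^{-z^2/2}\), and it depends only on \(\mu\). Combining with Step 1 yields \(W_2^2\le R\,\varepsilon\,o(1)\), that is, \(W_2=\sqrt R\,o(\sqrt\varepsilon)\).

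I expect the main obstacle to be Step 2: writing the difference so that the cancellation is visible with only \(L^1\) regularity of \(\rho\), and applying Fubini correctly. Once that identity is in place, Steps 1 and 3 are routine.
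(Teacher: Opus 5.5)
Your proof is correct, but it is a static, one-dimensional argument, whereas the paper's is dynamic. The paper interpolates \(\mu_{t,\varepsilon}\) by rescaling the truncated Gaussian by \(t\sqrt\varepsilon\) and writes \(\partial_t\mu_{t,\varepsilon}+\partial_x\flow=0\). It proves \(\sup_t\|\flow\|_{\mathrm{TV}}=o(\sqrt\varepsilon)\) by exploiting the centering of the kernel for \(C^1_c\) densities and then approximating \(\mu\) in \(L^1\). It concludes with Benamou--Brenier, using the velocity bound \(\sqrt\varepsilon R\).

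Your steps are the static counterparts of these:
\begin{itemize}
\item \(W_\infty\le\sqrt\varepsilon R\) plays the role of the velocity bound.
\item \(W_1=\|F_\varepsilon-F\|_{L^1}\) plays the role of the total variation of the flux. Indeed, in one dimension \(\partial_tF_t=-\flow\), so \(\|F_\varepsilon-F\|_{L^1}\le\int_0^1\|\flow\|_{L^1}\dd t\).
\item The symmetric second difference of \(F\), controlled by the \(L^1\) modulus of translations \(\omega\), replaces the cancellation-plus-approximation step.
\end{itemize}

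Your route buys brevity and an explicit rate, \(W_2^2\le\varepsilon R\,c_R^{-1}\int_0^Rz\,\omega(2\sqrt\varepsilon z)e^{-z^2/2}\dd z\), without smoothing or the continuity equation. The paper's route buys portability. Your inequality \(W_2^2\le W_\infty W_1\) and the CDF formula for \(W_1\) both rest on the monotone coupling being optimal for every exponent, and the inequality fails in general for \(d\geq2\). The flux argument, by contrast, is reused verbatim in \(\R^d\) in \Cref{thm: harmonic-Rd}. Through charts and push-forwards along the normal fibers, it also carries over to the curved multi-branch estimate of \Cref{thm: Wasserstein between the marginals}, which is what the model case is meant to prepare.

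One small fix concerns \(R<1\): bounding \(\omega\) by \(2\) there only gives \(O(\sqrt\varepsilon)\). Use instead that \(\omega\) is nondecreasing, so that \(\omega(2\sqrt\varepsilon z)\le\omega(2\sqrt\varepsilon)\) for \(z\le R<1\), together with \(c_R^{-1}\int_0^Rze^{-z^2/2}\dd z\le\frac12\). Combined with your bound for \(R\geq1\), this gives a rate depending only on \(\mu\) for every \(R>0\).
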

\begin{proof}
Throughout the proof, we still denote by \(\mu\) the density of \(\mu\). For \(t\in[0,1]\), let
\[
\mu_{t,\varepsilon}
\defeq\mu*(x\mapsto t\sqrt\varepsilon x)_\#
\left[\frac{1}{c_R}e^{-x^2/2}
\mathbf1_{\{|x|\le R\}}\dd x\right],
\]
where convolution and push-forward are understood as operations on
measures, so that \(\mu_{0,\varepsilon}=\mu\), \(\mu_{1,\varepsilon}=\widetilde\mu_\varepsilon\), and, for every \(t\in[0,1]\),
\[
\mu_{t,\varepsilon}(x)
=\frac{1}{c_R}\int_{-R}^R
\mu(x-t\sqrt\varepsilon z)e^{-z^2/2}\dd z.
\]

\textbf{Step 1. Computing the flux.}
For \(\phi\in C_c^\infty(\mathbb R)\) and \(t\in[0,1]\),
\begin{align*}
\frac{\dd}{\dd t}\int_\R\phi\dd\mu_{t,\varepsilon}
&=\frac{\dd}{\dd t}\frac{1}{c_R}\int_{-R}^R\int_\R
\phi(y+t\sqrt\varepsilon z)\mu(y)e^{-z^2/2}\dd y\dd z\\
&=\frac{1}{c_R}\int_{-R}^R\int_\R
\phi'(y+t\sqrt\varepsilon z)\sqrt\varepsilon z\,
\mu(y)e^{-z^2/2}\dd y \dd z
=\int_\R\phi'(x)\flow(x)\dd x,
\end{align*}
where, by the change of variables \(y=x-t\sqrt\varepsilon z\),
\[
\flow(x)\defeq\frac{1}{c_R}\int_{-R}^R
\sqrt\varepsilon z\,\mu(x-t\sqrt\varepsilon z)
e^{-z^2/2}\dd z.
\]
Differentiation under the integral sign is justified since \(\phi'\) is bounded, \(\mu\in L^1\) and \(\int_{-R}^R |z|e^{-z^2/2}\dd z<\infty\).
Thus \(\partial_t\mu_{t,\varepsilon}+\partial_x\flow=0\) in the sense of
distributions. Note that \(\mathfrak m_{0,\varepsilon}=0\) by
symmetry of the Gaussian kernel.

Let us estimate the total variation \(\|\flow\|_{\mathrm{TV}}=\int_\R|\flow|\dd x\). The flux depends linearly on \(\mu\): for \(f\in L^1(\R)\), let
\[
\flow[f](x)\defeq\frac{1}{c_R}\int_{-R}^R
\sqrt\varepsilon z\,f(x-t\sqrt\varepsilon z)
e^{-z^2/2}\dd z,
\qquad\text{so that }\flow=\flow[\mu].
\]
We will use the following bounds on the moments of the truncated Gaussian, which are uniform in \(R\) and follow from an integration by parts and the Cauchy--Schwarz inequality:
\begin{align}
\label{eq: 1D-second-moment}
\frac{1}{c_R}\int_{-R}^R z^2e^{-z^2/2}\dd z
&=1-\frac{2Re^{-R^2/2}}{c_R}\le1,
\\
\label{eq: 1D-first-moment}
\frac{1}{c_R}\int_{-R}^R |z|e^{-z^2/2}\dd z
&\le\left(\frac{1}{c_R}\int_{-R}^R z^2e^{-z^2/2}\dd z\right)^{1/2}
\le1.
\end{align}
By Fubini's theorem, the translation invariance of the Lebesgue measure and \eqref{eq: 1D-first-moment}, we get
\[
\|\flow[f]\|_{L^1}\le\sqrt\varepsilon\|f\|_{L^1}
\qquad\text{for every }f\in L^1(\R).
\]
If moreover \(f\in C_c^1(\R)\), we write, with \(x_s\defeq x-s\sqrt\varepsilon z\),
\[
f(x-t\sqrt\varepsilon z)
=f(x)-\int_0^t\sqrt\varepsilon z f'(x_s)\dd s.
\]
Since the kernel is centered, that is,
\(\int_{-R}^R z e^{-z^2/2}\dd z=0\),
the term containing \(f(x)\) cancels, so that, by \eqref{eq: 1D-second-moment},
\[
\|\flow[f]\|_{L^1}
=\int_\R\left|\frac{\varepsilon}{c_R}\int_{-R}^R
z^2e^{-z^2/2}\int_0^tf'(x_s)\dd s\dd z\right|\dd x
\le t\varepsilon\|f'\|_{L^1}
\le\varepsilon\|f'\|_{L^1}.
\]
Now let \((\mu^k)_{k\in\N}\) be a sequence in \(C_c^1(\R)\)
such that \(\mu^k\to\mu\) in \(L^1\). Writing \(\flow=\flow[\mu-\mu^k]+\flow[\mu^k]\), we obtain, for every \(k\in\N\) and \(t\in[0,1]\),
\[
\frac{\|\flow\|_{\mathrm{TV}}}{\sqrt\varepsilon}
\le\|\mu-\mu^k\|_{L^1}
+\sqrt\varepsilon\|(\mu^k)'\|_{L^1}.
\]
Define, for \(r>0\),
\[
\omega(r)\defeq\inf_{k\in\N}\left\{
\|\mu-\mu^k\|_{L^1}+r\|(\mu^k)'\|_{L^1}\right\}.
\]
For each fixed \(k\),
\(0\le\limsup_{r\downarrow0}\omega(r)\le\|\mu-\mu^k\|_{L^1}\), and
letting \(k\to\infty\) shows that \(\omega(r)\to0\) as \(r\to0\). Therefore
\begin{equation}
\label{eq: 1D-flux-bound}
\sup_{t\in[0,1]}\frac{\|\flow\|_{\mathrm{TV}}}{\sqrt\varepsilon}
\le\omega(\sqrt\varepsilon)\longrightarrow0
\qquad(\varepsilon\to0).
\end{equation}

\textbf{Step 2. Estimate of \(W_2(\mu,\widetilde\mu_\varepsilon)\).}
By Step~1, the curve \((\mu_{t,\varepsilon})_{t\in[0,1]}\) of probability measures with finite second moments solves, together with \((\flow)_{t\in[0,1]}\), the continuity equation, and it is weakly continuous, with \(\mu_{0,\varepsilon}=\mu\) and \(\mu_{1,\varepsilon}=\widetilde\mu_\varepsilon\). Since \(|z|\le R\) in the definition of \(\flow\), we have
\[
|\flow(x)|
\le\frac{\sqrt\varepsilon R}{c_R}
\int_{-R}^R\mu(x-t\sqrt\varepsilon z)
e^{-z^2/2}\dd z
=\sqrt\varepsilon R\,\mu_{t,\varepsilon}(x)
\qquad\text{for a.e. }x\in\R.
\]
Thus \(\flow\ll\mu_{t,\varepsilon}\), and the velocity field
\[
v_{t,\varepsilon}
\defeq\frac{\dd \flow}{\dd\mu_{t,\varepsilon}}
\quad\text{satisfies}\quad |v_{t,\varepsilon}|\le\sqrt\varepsilon R
\quad\mu_{t,\varepsilon}\text{-a.e.}
\]
By the Benamou--Brenier formula (see, e.g., \cite{santambrogioOptimalTransportApplied2015}), with \((\mu_{t,\varepsilon},v_{t,\varepsilon})_{t\in[0,1]}\) as a competitor, and by \eqref{eq: 1D-flux-bound}, we get
\begin{align*}
W_2^2(\mu,\widetilde\mu_\varepsilon)
&\le\int_0^1\int_\R |v_{t,\varepsilon}|^2
\,\dd\mu_{t,\varepsilon}\dd t=\int_0^1\int_\R |v_{t,\varepsilon}|\,\dd|\flow|\dd t
\\
&\le\sqrt\varepsilon R\sup_{t\in[0,1]}
\|\flow\|_{\mathrm{TV}}
\le \varepsilon R\,\omega(\sqrt\varepsilon).
\end{align*}
Hence \(W_2(\mu,\widetilde\mu_\varepsilon)\le\sqrt R\,\sqrt{\omega(\sqrt\varepsilon)}\,\sqrt\varepsilon\), which concludes the proof, since \(\omega\) depends only on \(\mu\).
\end{proof}
\subsubsection{General statement}
Before turning to the proof, we introduce some notation and a technical lemma. We fix \(i\in\{1,2\}\) and write \(M\defeq M_i=\T^d=\R^d/\mathbb Z^d\). Every open ball of \(\T^d\) with radius less than \(1/2\) is the isometric image of a Euclidean ball under the canonical projection \(\R^d\to\T^d\). All the points involved in the computations below lie in such balls, which we identify with the corresponding Euclidean balls, so that sums, segments and convexity are understood in \(\R^d\). In this section and in \Cref{sec: extensions}, \(C\) denotes a positive constant that does not depend on \(\varepsilon\), whose value may change from line to line, and we write \(a\lesssim b\) when \(a\leq Cb\).
 
By \Cref{prop: twist-geometry}, \(\dim\Sigma=d\) and the projection \(\proj_M^\Sigma:\Sigma\to M\) is a local \(C^{1,1}\)-diffeomorphism.
More precisely, there exist a finite index set \(J\), an open covering
\((U_j)_{j\in J}\) of \(\Sigma\), and open sets
\(V_j\subseteq M\), which we may assume convex and contained in balls of radius \(1/4\), such that, for every \(j\in J\),
\[
    \proj_M^{U_j}:U_j\longrightarrow V_j
\]
is a \(C^{1,1}\)-diffeomorphism. We denote its inverse by
\[
    \alpha_j
    \defeq
    \left(\proj_M^{U_j}\right)^{-1}
    :V_j\longrightarrow U_j.
\]
As in \Cref{lemma: determinant of Phi}, the Jacobian \(\mathcal J_F\) of a map \(F\) between \(d\)-dimensional manifolds (here, open subsets of \(\R^d\) or of \(\Sigma\)) denotes the absolute value of the determinant of \(DF\) in orthonormal bases.
Let \((\chi_j)_{j\in J}\) be a partition of unity on
\(\Sigma\) subordinate to the covering \((U_j)_{j\in J}\), that is, each \(\chi_j:\Sigma\to[0,1]\) is continuous with compact support in \(U_j\) (recall that \(\Sigma\) is compact), and \(\sum_{j\in J}\chi_j=1\) on \(\Sigma\).
 
\begin{lemma}
\label{lemma: C^1 diffeo}
Let \(V\subseteq \R^d\) be an open set and \(f:V\to\R^d\) be \(1\)-Lipschitz. There exists \(C_d\geq2\), depending only on \(d\), such that, for \(0<\tau\leq1/C_d\), the map \(\psi_\tau(x)\defeq x+\tau f(x)\) is a bi-Lipschitz homeomorphism from \(V\) onto the open set \(\psi_\tau(V)\), and \(\kappa_\tau\defeq\psi_\tau^{-1}\) satisfies
\(\|\mathcal J_{\psi_\tau}-1\|_{L^\infty(V)}+\|\mathcal J_{\kappa_\tau}-1\|_{L^\infty(\psi_\tau(V))}
\leq
C_d\tau\).
\end{lemma}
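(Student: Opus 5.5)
The plan is a perturbation-of-identity argument on \(V\), in three steps: injectivity and a bi-Lipschitz bound, openness of the image, and Jacobian estimates via Rademacher's theorem.

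\textbf{Injectivity and bi-Lipschitz bound.} For \(x,y\in V\), since \(f\) is \(1\)-Lipschitz,
\[
|\psi_\tau(x)-\psi_\tau(y)|\geq|x-y|-\tau|f(x)-f(y)|\geq(1-\tau)|x-y|\geq\tfrac12|x-y|
\]
as soon as \(\tau\leq1/2\). Likewise \(|\psi_\tau(x)-\psi_\tau(y)|\leq(1+\tau)|x-y|\). Hence \(\psi_\tau\) is injective and bi-Lipschitz onto its image, and \(\kappa_\tau\) is \(2\)-Lipschitz there.

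\textbf{Openness of the image.} Here I would use Brouwer's invariance of domain: \(\psi_\tau\) is a continuous injection of an open subset of \(\R^d\) into \(\R^d\), so \(\psi_\tau(V)\) is open and \(\psi_\tau\) is a homeomorphism onto it. Alternatively, one can avoid topology with a local contraction argument. Fix \(x_0\in V\) with \(\overline B(x_0,r)\subseteq V\) and a target \(y\) with \(|y-\psi_\tau(x_0)|<r/2\). The map \(x\mapsto y-\tau f(x)\) is a \(\tau\)-contraction, and it sends \(\overline B(x_0,r)\) into itself because
\[
|y-\tau f(x)-x_0|\leq|y-\psi_\tau(x_0)|+\tau|f(x_0)-f(x)|<r/2+\tau r\leq r .
\]
Its fixed point \(x\) satisfies \(\psi_\tau(x)=y\), so a ball around \(\psi_\tau(x_0)\) lies in \(\psi_\tau(V)\). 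This second route is elementary, so I would present it.

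\textbf{Jacobian estimates.} By Rademacher's theorem, \(f\) is differentiable a.e. with \(\|Df\|\leq1\). At such points,
\[
D\psi_\tau=I+\tau Df,\qquad \mathcal J_{\psi_\tau}=\det(I+\tau Df).
\]
Expanding the determinant as a polynomial in \(\tau\), whose coefficients are sums of minors of \(Df\) bounded by constants depending only on \(d\), gives \(|\det(I+\tau A)-1|\leq C'_d\tau\) for all \(\|A\|\leq1\) and \(\tau\leq1\). In the same range, \(\det(I+\tau A)\geq(1-\tau)^d\geq2^{-d}\).

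For \(\kappa_\tau\), which is Lipschitz, hence differentiable a.e. on \(\psi_\tau(V)\), I would use the chain rule \(D\kappa_\tau(\psi_\tau(x))=(D\psi_\tau(x))^{-1}\). This identity holds at a.e. \(y\in\psi_\tau(V)\), because \(\psi_\tau\) is bi-Lipschitz and therefore maps null sets to null sets; so the set of \(y=\psi_\tau(x)\) where \(f\) fails to be differentiable at \(x\) is null. Then \(\mathcal J_{\kappa_\tau}=1/\mathcal J_{\psi_\tau}\circ\kappa_\tau\), and
\[
|1/J-1|=|J-1|/J\leq2^dC'_d\tau .
\]
Taking \(C_d\defeq\max(2,\,C'_d+2^dC'_d)\) gives all the claims for \(\tau\leq1/C_d\).

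The only delicate point is this chain-rule step for \(\kappa_\tau\) at a.e. point, and it is handled by the null-set preservation of bi-Lipschitz maps just described.
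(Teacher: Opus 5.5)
Your proposal is correct and follows the paper's proof essentially step by step. It uses the same two-sided bound $(1-\tau)|x-y|\le|\psi_\tau(x)-\psi_\tau(y)|\le(1+\tau)|x-y|$, Rademacher's theorem with a bound $|\det(I+B)-1|\lesssim\|B\|$ for $\|B\|\le1$, and the a.e.\ chain rule for $\kappa_\tau$, justified by Lipschitz maps preserving null sets. The only deviation is that you prove openness of $\psi_\tau(V)$ by a Banach fixed-point argument on $\overline B(x_0,r)$ instead of invariance of domain, which is a valid and more elementary substitute. Your explicit lower bound $\det(I+\tau A)\ge(1-\tau)^d$ also makes precise the paper's ``up to enlarging $C_d$'' step.
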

 
\begin{proof}
Since \(f\) is \(1\)-Lipschitz and \(\tau<1\), we have \((1-\tau)|x-y|\leq|\psi_\tau(x)-\psi_\tau(y)|\leq(1+\tau)|x-y|\) for \(x,y\in V\). Thus \(\psi_\tau\) is a bi-Lipschitz homeomorphism onto its image, which is open by the invariance of domain theorem. By Rademacher's theorem, \(f\) is differentiable a.e.\ on \(V\), with \(\|Df\|\leq1\), and \(D\psi_\tau=\id+\tau Df\) there. Since the determinant is Lipschitz on the set of matrices \(B\) with \(\|B\|\leq1\), there exists \(C_d\) such that \(|\det(\id+B)-1|\leq C_d\|B\|\) for such \(B\). Applying this to \(B=\tau Df(x)\) gives \(|\mathcal J_{\psi_\tau}-1|\leq C_d\tau\) a.e.\ on \(V\). Finally, since the Lipschitz maps \(\psi_\tau\) and \(\kappa_\tau\) map negligible sets to negligible sets, for a.e.\ \(y\in\psi_\tau(V)\), \(\kappa_\tau\) is differentiable at \(y\) and \(\psi_\tau\) is differentiable at \(\kappa_\tau(y)\), so that \(\mathcal J_{\kappa_\tau}(y)=1/\mathcal J_{\psi_\tau}(\kappa_\tau(y))\) by the chain rule. Hence the same bound holds for \(\mathcal J_{\kappa_\tau}\), up to enlarging \(C_d\).
\end{proof}

\begin{proof}[Proof of \Cref{thm: Wasserstein between the marginals}]
Let \(\varepsilon>0\) be small enough so that \(\sqrt\varepsilon R<1/4\). All the displacements considered below are bounded by \(\sqrt\varepsilon R\), so that, starting from \(V_j\), they stay in a ball of radius \(1/2\), where the identification with \(\R^d\) holds. Recall that \(\overline \gamma = \overline \rho\, \mathcal H^d_{|\Sigma}\). We define the \(d\times k\) matrix (recall that \(k=d\))
\[A(z)
\defeq \Big(\proj_{M}\big(n_1(z)\big),\ldots,\proj_M\big(n_k(z)\big)\Big). \]
Equivalently, \(A(z)s
=
\sum_{p=1}^{k}
s_p \proj_{M}\big(n_p(z)\big)\). Therefore,
\[\proj_M\Phi(z,s)
=\proj_M(z+ n(z)s)= \proj_M(z) + A(z)s .\]
Since \(K(z,\cdot)\) is even, we have, for every \(z\in \Sigma\),
\begin{equation}
    \label{eq: symmetry of the Gaussian}
\int_{\ball_R} s K(z,s)\dd s=0.
\end{equation}

\begin{figure}[htbp]
    \centering
\begin{tikzpicture}[scale=2.2]
  \draw[-] (-0.7,0) -- (3.5,0);
  \draw[-] (-0.7,0) -- (-0.7,2.2);
  \draw[-] (-0.7,2.2) -- (3.5,2.2);
  \draw[-] (3.5,0) -- (3.5,2.2);
\draw[thin, red, smooth]
    plot coordinates {(-0.7,0.3) (-0.5,0.2) (-0.1,0.15)
    (1,0.9) (2,1.5) (3,1.2) (3.5,2)};
\draw[dashed] (2.5,1.35) -- (2.68,1.73);
\draw[dashed] (2.68,1.73) -- (2.68,0);
\draw[dashed] (2.5,1.34) -- (2.5,0);
\node at (2.95,1.85){\small \(\Phi(z,\sqrt{\varepsilon}s)\)};
\fill (2.68,1.73) circle (0.03);
\node at (2.1,1.27){\small \(z=\alpha_j(x)\)};
\fill (2.5,1.35) circle (0.03);
\node at (2.1,-0.12){\small \(\proj_M^\Sigma(z)=x\)};
\fill (2.5,0) circle (0.03);
\node at (3,-0.12){\small \(\zeta_{\varepsilon,s,1}^j(x)\)};
\fill (2.68,0) circle (0.03);
\node[red] at (0.7,0.5){\small \(\Sigma\)};
\node at (-0.5,2){\(\Omega\)};
\node at (-0.5,-0.12){\(M\)};
\end{tikzpicture}
 \caption{Normal perturbation of \(z\in\Sigma\) and its projection onto \(M\).}
    \label{fig: normal-perturbation}
\end{figure}
 
\textbf{Step 1. Construction of the interpolation.} As in \Cref{subsubsec: 1D}, we interpolate between \(\mu\) and \(\widetilde\mu_\varepsilon\) by letting the variance of the normal perturbation grow from \(0\) to \(\varepsilon\): for \(t\in[0,1]\), we let \(\mu_{t,\varepsilon}\defeq\widetilde\mu_{t^2\varepsilon}\) be the marginal of the normal perturbation \(\widetilde\gamma_{t^2\varepsilon}\), with the convention \(\widetilde\gamma_0\defeq\overline\gamma\). In other words, by \eqref{eq: recovery sequence with wrong marginals 1}, for every \(\phi\in C^1(M)\),
\begin{equation}
\label{eq: interpolation-global}
\int_M\phi\dd\mu_{t,\varepsilon}
=\int_\Sigma\int_{\ball_R}\phi\bigl(\proj_M(\Phi(z,t\sqrt\varepsilon s))\bigr)K(z,s)\dd s\dd\overline\gamma(z),
\end{equation}
so that \(\mu_{0,\varepsilon}=\mu\) and \(\mu_{1,\varepsilon}=\widetilde\mu_\varepsilon\).
 
To compute \(\mu_{t,\varepsilon}\), we use the charts \(\alpha_j\). For \(j\in J\) and \(x\in V_j\), we write
\[
A_j(x)\defeq A(\alpha_j(x)),
\qquad
K_j(x,s)\defeq K(\alpha_j(x),s).
\]
Inserting \(\sum_{j\in J}\chi_j=1\) in \eqref{eq: interpolation-global}, then changing variables \(z=\alpha_j(x)\) in each term by the area formula, and using \(\proj_M(\Phi(\alpha_j(x),t\sqrt\varepsilon s))=x+t\sqrt\varepsilon A_j(x)s\), we get
\begin{align*}
\int_M\phi\dd\mu_{t,\varepsilon}
&=\sum_{j\in J}\int_{U_j}\int_{\ball_R}\chi_j(z)\,\phi\bigl(\proj_M(\Phi(z,t\sqrt\varepsilon s))\bigr)K(z,s)\dd s\,\overline\rho(z)\dd\mathcal H^d(z)
\\
&=\sum_{j\in J}\int_{V_j}\int_{\ball_R}\phi\bigl(x+t\sqrt\varepsilon A_j(x)s\bigr)K_j(x,s)\dd s
\\
&\hspace{4cm}\times\chi_j(\alpha_j(x))\,\overline\rho(\alpha_j(x))\,\mathcal J_{\alpha_j}(x)\dd\vol_M(x)
\\
&=\sum_{j\in J}\int_{V_j}\int_{\ball_R}\phi\bigl(\zeta^j_{\varepsilon,s,t}(x)\bigr)\,\rho_j(x)K_j(x,s)\dd s\dd\vol_M(x),
\end{align*}
where, for \(s\in\ball_R\), \(t\in[0,1]\) and \(x\in V_j\), we have set
\[
\zeta_{\varepsilon,s,t}^j(x) \defeq x + t \sqrt{\varepsilon }A_j(x)s,
\qquad
\rho_j(x)\defeq\chi_j(\alpha_j(x))\,\overline\rho(\alpha_j(x))\,\mathcal J_{\alpha_j}(x).
\]
Here \(\zeta^j_{\varepsilon,s,t}(x)\) is the projection onto \(M\) of the point \(\Phi(\alpha_j(x),t\sqrt\varepsilon s)\), obtained by moving the point \(\alpha_j(x)\in\Sigma\) lying above \(x\) in the normal direction (see \Cref{fig: normal-perturbation}), and \(\rho_j\) is the density, with respect to \(\vol_M\), of the partial marginal \((\proj_M)_\#(\chi_j\overline\gamma)\), concentrated on \(V_j\). Hence
\begin{equation}
\label{eq: interpolation-charts}
\mu_{t,\varepsilon} = \sum_{j\in J} \mu_{t,\varepsilon}^j,
\qquad
\mu_{t,\varepsilon}^j\defeq\bigl((s,x)\mapsto\zeta^j_{\varepsilon,s,t}(x)\bigr)_\#\bigl(\rho_j(x)K_j(x,s)\dd s\dd\vol_M(x)\bigr).
\end{equation}
Equivalently, \(\mu_{t,\varepsilon}^j=\int_{\ball_R}(\zeta_{\varepsilon,s,t}^j)_{\#}(\rho_jK_j(\cdot,s)\vol_M)\dd s\), which is the form used below.
 
\textbf{Step 2. Construction of the flux.} For every \(\phi\in C^1(M)\), by \eqref{eq: interpolation-charts},
\begin{align*}
\frac{\dd}{\dd t}\int_M \phi\dd\mu_{t,\varepsilon}
&=\frac{\dd}{\dd t}\sum_{j\in J}\int_{V_j} \int_{\ball_R}\phi(\zeta_{\varepsilon,s,t}^j(x))\rho_j(x)K_j(x,s) \dd s \dd \vol_M(x) 
\\
&=\sum_{j\in J}\int_{V_j} \int_{\ball_R} \big\langle \nabla \phi(\zeta_{\varepsilon,s,t}^j(x)), \partial_t\zeta_{\varepsilon,s,t}^j (x)\big\rangle \rho_j(x)K_j(x,s) \dd s \dd \vol_M(x)
\\
&=\sum_{j\in J}\int_{V_j} \int_{\ball_R} \sqrt{\varepsilon}\big\langle \nabla \phi(\zeta_{\varepsilon,s,t}^j(x)), A_j(x)s  \big\rangle \rho_j(x)K_j(x,s) \dd s \dd \vol_M(x)
\\
&=
\int_M
\left\langle \nabla \phi,\dd\flow\right\rangle,
\end{align*}
where the differentiation under the integral sign is justified since \(\nabla\phi\) is bounded and \(|\partial_t\zeta^j_{\varepsilon,s,t}(x)|=\sqrt\varepsilon|A_j(x)s|\leq\sqrt\varepsilon R\), and where the flux \(\flow\in \mathcal M(M;TM)\) is the vector measure defined by
\[\flow \defeq \sqrt{\varepsilon}\sum_{j\in J} \int_{\ball_R} (\zeta_{\varepsilon,s,t}^j)_{\#}\left(\rho_jK_j(\cdot,s)A_js\, \vol_M\right) \dd s. \]
Integrating in time against a test function in
\(C_c^1((0,1))\) yields
\[
\partial_t\mu_{t,\varepsilon}
+
\operatorname{div}\flow
=0
\qquad\text{in }\mathcal D'((0,1)\times M).
\]

\textbf{Step 3. Gaussian symmetry.} As in \Cref{subsubsec: 1D}, the flux depends linearly on the densities \(\rho_j\). For \(j\in J\), \(s\in\ball_R\) and \(f\in L^1(V_j)\), let
\[
F_{j,s}[f](x) \defeq
\begin{cases}
f(x)K_j(x,s)A_j(x)s,
& x\in V_j,
\\[1mm]
0,
& x\notin V_j,
\end{cases}
\]
so that, by definition of the flux,
\[
\frac{\flow}{\sqrt{\varepsilon}}
= \sum_{j\in J}
\int_{\ball_R}
(\zeta_{\varepsilon,s,t}^j)_{\#}
\big(
F_{j,s}[\rho_j]\vol_M
\big)
\dd s.
\]
By \eqref{eq: symmetry of the Gaussian}, \(\int_{\ball_R}F_{j,s}[\rho_j]\dd s=0\). Subtracting this in each term, we obtain
\begin{equation}
\label{eq: flow-decomposition}
\frac{\flow}{\sqrt{\varepsilon}}
=\sum_{j\in J}
\int_{\ball_R}
\left((\zeta_{\varepsilon,s,t}^j)_{\#}-\id\right)\left(
F_{j,s}[\rho_j]\vol_M
\right)
\dd s
=\sum_{j\in J}\mathbf{I}_{t,\varepsilon}^j[\rho_j],
\end{equation}
where, for \(f\in L^1(V_j)\),
\[
\mathbf{I}_{t,\varepsilon}^j[f]
\defeq
\int_{\ball_R}
\left((\zeta_{\varepsilon,s,t}^j)_{\#}-\id\right)\left(
F_{j,s}[f]\vol_M
\right)
\dd s.
\]
 
\textbf{Step 4. Estimate of the displacement term.} Fix \(j\in J\). We estimate the total variation of \(\mathbf{I}_{t,\varepsilon}^j[f]\) in two ways.
 
\emph{(a) For \(f\in L^1(V_j)\).} Since the push-forward operation does not increase the total variation and \(|A(z)s|\leq|n(z)s|=|s|\), \eqref{eq: gaussian-moments} gives
\begin{align*}
\|\mathbf{I}_{t,\varepsilon}^j[f]\|_{\mathrm{TV}}
&\leq 2\int_{\ball_R}\|F_{j,s}[f]\|_{L^1(V_j)}\dd s
\\
&\leq 2\int_{V_j}|f(x)|\int_{\ball_R}|s|K_j(x,s)\dd s\dd\vol_M(x)
\leq 2M_1\|f\|_{L^1(V_j)}.
\end{align*}
 
\emph{(b) For \(f\in C_c^1(V_j)\) supported in a fixed compact set \(S\subseteq V_j\).} Write \(F_s\defeq F_{j,s}[f]\). Fix \(t\in[0,1]\) and \(s\in\ball_R\), and write \(\zeta\defeq\zeta_{\varepsilon,s,t}^j\), \(\kappa\defeq\zeta^{-1}\) and \(W\defeq\zeta(V_j)\). Since \(|\zeta(x)-x|\leq\sqrt\varepsilon R\),
we may choose \(\varepsilon>0\) sufficiently small, depending only on \(S\) (and not on \(f\), \(s\) and \(t\)), so that
\[
S\Subset W
\qquad\text{and}\qquad
\zeta(S)\Subset V_j.
\]
By the area formula, the density of \(\zeta_{\#}
(F_s\vol_M)\) is
\(F_s\circ\kappa\, \mathcal J_{\kappa}\mathbf{1}_{W}\), and, since \(\spt(F_s)\subseteq S\), the above inclusions show that \(F_s\circ\kappa\,\mathbf 1_{W}\) and \(F_s\) are both supported in \(W\cap V_j\). Hence
\begin{align*}
\big\|
\zeta_{\#}
(F_s\vol_M)
-
F_s\vol_M
\big\|_{\mathrm{TV}}
&\leq\|\mathcal J_{\kappa}-1\|_{L^\infty(W)}
\int_{W}
|F_s\circ\kappa|
\dd \vol_M
\\
&\quad+\int_{W\cap V_j }
|F_s\circ\kappa-F_s|\dd\vol_M.
\end{align*}
By \Cref{lemma: C^1 diffeo}, applied with \(\tau\defeq t\sqrt\varepsilon|s|L_j\) and \(f\defeq A_js/(|s|L_j)\), where \(L_j\) is a Lipschitz constant of \(A_j\) on the convex set \(V_j\) (recall that \(n\) is Lipschitz and \(\alpha_j\) is \(C^{1,1}\)), we have, for \(\varepsilon\) small enough, \(\|\mathcal J_{\kappa}-1\|_{L^\infty(W)}+\|\mathcal J_{\zeta}-1\|_{L^\infty(V_j)}\leq C\sqrt{\varepsilon}|s|\), and in particular \(1/2\leq\mathcal J_\kappa,\mathcal J_\zeta\leq2\) a.e. Since, by the area formula (change of variables \(y=\kappa(x)\)),
\[
\int_W|F_s\circ\kappa|\dd\vol_M
\leq2\int_W|F_s\circ\kappa|\,\mathcal J_\kappa\dd\vol_M
=2\|F_s\|_{L^1(V_j)},
\]
the first term is at most \(C\sqrt\varepsilon|s|\,\|F_s\|_{L^1(V_j)}\). For the second term, recall that \(F_s\) is defined on \(M\) and vanishes outside \(V_j\). By the area formula (change of variables \(x=\zeta(y)\)),
\[
\int_{W\cap V_j}|F_s\circ\kappa-F_s|\dd\vol_M
\leq2\int_{V_j}|F_s\circ\zeta-F_s|\dd\vol_M
\leq4\sqrt\varepsilon|s|\,\|\nabla F_s\|_{L^1(V_j)}.
\]
To prove the last inequality, assume first that \(F_s\) is \(C^1\). Since \(u\mapsto\zeta^j_{\varepsilon,s,ut}(y)=y+ut\sqrt\varepsilon A_j(y)s\) parametrizes the segment from \(y\) to \(\zeta(y)\) and \(|A_j(y)s|\leq|s|\), we have
\[
|F_s(\zeta(y))-F_s(y)|\leq\sqrt\varepsilon|s|\int_0^1\bigl|\nabla F_s\bigl(\zeta^j_{\varepsilon,s,ut}(y)\bigr)\bigr|\dd u,
\]
and we conclude by the area formula (change of variables \(x=\zeta^j_{\varepsilon,s,ut}(y)\)), since the inverse of \(\zeta^j_{\varepsilon,s,ut}\) has Jacobian at most \(2\) by \Cref{lemma: C^1 diffeo} (applied with \(ut\) in place of \(t\)). In general, \(F_s\) is only Lipschitz on \(M\), since \(A_j\) and \(K_j(\cdot,s)\) are Lipschitz on \(V_j\) (see below) and \(f\) has compact support in \(V_j\). We then apply the above to mollifications of \(F_s\), which converge to \(F_s\) in \(W^{1,1}(M)\), and pass to the limit, using that \(\int_{V_j}|g\circ\zeta|\dd\vol_M\leq2\|g\|_{L^1(M)}\) for \(g\in L^1(M)\), since \(\mathcal J_\kappa\leq2\). Altogether,
\[
\big\|
\zeta_{\#}
(F_s\vol_M)
-
F_s\vol_M
\big\|_{\mathrm{TV}}
\leq
C\sqrt{\varepsilon}|s|
\left(
\|F_s\|_{L^1(V_j)}
+
\|\nabla F_s\|_{L^1(V_j)}
\right).
\]
It remains to bound the \(L^1\) norms of \(F_s\) and \(\nabla F_s\). By \eqref{eq: normal-ellipticity} and since \(Z_R\geq c\), we have \(K_j(x,s)\leq c^{-1}e^{-\lambda|s|^2/2}\). Moreover, \(x\mapsto\Hn{\alpha_j(x)}\) is Lipschitz (since \(E\in C^{2,1}\), \(n\) is Lipschitz and \(\alpha_j\) is \(C^{1,1}\)), hence so are \(x\mapsto Z_R(\alpha_j(x))\) and \(K_j(\cdot,s)\), and differentiating a.e.\
\(K_j(x,s)=\mathbf1_{\ball_R}(s)\,e^{-\Hn{\alpha_j(x)}(s,s)/2}/Z_R(\alpha_j(x))\) with respect to \(x\) gives \(|\nabla_xK_j(x,s)|\leq C(1+|s|^2)e^{-\lambda|s|^2/2}\) for a.e.\ \(x\in V_j\). Since \(F_s=f\,K_j(\cdot,s)\,A_js\), with \(|A_js|\leq|s|\) and \(A_j\) Lipschitz, the product rule yields
\[
\|F_s\|_{L^1(V_j)}
\leq
C e^{-\lambda|s|^2/2}
|s|\|f\|_{L^1(V_j)},
\]
\[
\|\nabla F_s\|_{L^1(V_j)}
\leq
C e^{-\lambda|s|^2/2}
\left[
|s|\|\nabla f\|_{L^1(V_j)}
+
(|s|+|s|^3)\|f\|_{L^1(V_j)}
\right].
\]
Integrating with respect to \(s\in\ball_R\), we obtain, uniformly in \(t\in[0,1]\),
\[
\|\mathbf{I}_{t,\varepsilon}^j[f]\|_{\mathrm{TV}}
\leq
C\sqrt{\varepsilon} (M_2+M_4)
\left(
\|f\|_{L^1(V_j)}
+
\|\nabla f\|_{L^1(V_j)}
\right),
\]
where \(M_2\) and \(M_4\) are defined in \eqref{eq: gaussian-moments}.
 
\emph{Conclusion.} Since \(\chi_j\) has compact support in \(U_j\), \(\rho_j\) vanishes outside the compact set \(\proj_M(\spt\chi_j)\subseteq V_j\). By mollification, there exist a compact set \(S_j\subseteq V_j\) and a sequence \((\rho_j^{(k)})_{k\in\N}\) in \(C_c^1(V_j)\), all supported in \(S_j\), such that
\(\|\rho_j-\rho_j^{(k)}\|_{L^1(V_j)}\to0\) as \(k\to\infty\). Applying (b) with \(S=S_j\), there exists \(\varepsilon_0>0\), independent of \(k\), such that, for \(0<\varepsilon\leq\varepsilon_0\) and every \(k\in\N\), writing \(\mathbf{I}_{t,\varepsilon}^j[\rho_j]=\mathbf{I}_{t,\varepsilon}^j[\rho_j-\rho_j^{(k)}]+\mathbf{I}_{t,\varepsilon}^j[\rho_j^{(k)}]\), (a) and (b) give
\[
\sup_{t\in[0,1]}\|\mathbf{I}_{t,\varepsilon}^j[\rho_j]\|_{\mathrm{TV}}
\leq 2M_1\|\rho_j-\rho_j^{(k)}\|_{L^1(V_j)}
+
C\sqrt{\varepsilon}
\left(
\|\rho_j^{(k)}\|_{L^1(V_j)}
+
\|\nabla \rho_j^{(k)}\|_{L^1(V_j)}
\right).
\]
For \(r>0\), set
\[
\omega_j(r)
\defeq
\inf_{k\geq1}
\left\{
2M_1\|\rho_j-\rho_j^{(k)}\|_{L^1(V_j)}
+
rC
\left(
\|\rho_j^{(k)}\|_{L^1(V_j)}
+
\|\nabla \rho_j^{(k)}\|_{L^1(V_j)}
\right)
\right\},
\]
and \(\omega(r)\defeq\sum_{j\in J}\omega_j(r)\).
Since \(\rho_j^{(k)}\to\rho_j\) in \(L^1(V_j)\), we have
\(\omega(r)\to0\) as \(r\to 0\): one first fixes \(k\) sufficiently large so that the first
term is arbitrarily small, and then lets \(r\to0\).
Combining this with \eqref{eq: flow-decomposition} yields, for \(0<\varepsilon\leq\varepsilon_0\),
\[
\sup_{t\in[0,1]}
\frac{\|\flow\|_{\mathrm{TV}}}
{\sqrt{\varepsilon}}
\leq \sum_{j\in J} \sup_{t\in[0,1]} \|\mathbf{I}_{t,\varepsilon}^j[\rho_j]\|_{\mathrm{TV}} \leq
\omega(\sqrt{\varepsilon})
=o(1).
\]
 
\textbf{Step 5. Benamou--Brenier estimate.} Recall from Step~2 and \eqref{eq: interpolation-charts} that
\begin{align*}
\flow &= \sqrt{\varepsilon}\sum_{j\in J} \int_{\ball_R} (\zeta_{\varepsilon,s,t}^j)_{\#}\left(\rho_jK_j(\cdot,s)A_js\, \vol_M\right) \dd s,
\\
\mu_{t,\varepsilon}&=\sum_{j\in J}\int_{\ball_R}(\zeta_{\varepsilon,s,t}^j)_{\#}(\rho_jK_j(\cdot,s)\vol_M)\dd s.
\end{align*}
Since \(|A_j(x)s|\leq|n(\alpha_j(x))s|=|s|\leq R\), we get \(|\flow|\leq\sqrt\varepsilon R\,\mu_{t,\varepsilon}\). Thus \(\flow\ll\mu_{t,\varepsilon}\), and the velocity field
\[
v_{t,\varepsilon}
\defeq\frac{\dd \flow}{\dd\mu_{t,\varepsilon}}\in L^\infty(\mu_{t,\varepsilon};TM)
\quad\text{satisfies}\quad |v_{t,\varepsilon}|\le\sqrt\varepsilon R
\quad\mu_{t,\varepsilon}\text{-a.e.}
\]
Since \((\mu_{t,\varepsilon},v_{t,\varepsilon})\) satisfies the
continuity equation, with \(\mu_{0,\varepsilon}=\mu\) and \(\mu_{1,\varepsilon}=\widetilde\mu_\varepsilon\), the Benamou--Brenier formula (see, e.g., \cite{santambrogioOptimalTransportApplied2015}) yields
\begin{align*}
W_2^2(\mu,\widetilde\mu_\varepsilon)
&\leq\int_0^1\int_M |v_{t,\varepsilon}|^2
\,\dd\mu_{t,\varepsilon}\dd t=\int_0^1\int_M |v_{t,\varepsilon}|\,\dd|\flow|\dd t
\\
&\le\sqrt\varepsilon R\sup_{t\in[0,1]}
\|\flow\|_{\mathrm{TV}}.
\end{align*}
 
By Step~4, \(\sup_{t\in[0,1]}\|\flow\|_{\mathrm{TV}}\leq\sqrt\varepsilon\,\omega(\sqrt\varepsilon)\) for \(0<\varepsilon\leq\varepsilon_0\), hence
\[
W_2^2(\mu,\widetilde\mu_\varepsilon)
\leq
\varepsilon R\,\omega(\sqrt{\varepsilon}).
\]
Since \(\omega(r)\to0\) as \(r\to0\), this gives \(W_2(\mu,\widetilde\mu_\varepsilon)=o_R(\sqrt{\varepsilon})\).
\end{proof}

\section{Extension to generalized framework and examples}
\label{sec: extensions}
In this section, we extend \hyperref[thm: A]{Theorem~A} to compactly supported marginals on \(\R^d\) (\Cref{subsec: euclidean-extension}), where we also prove \Cref{thm: convergence-Gibbs-measure}, and to \(N\geq2\) marginals (\Cref{subsec: multimarginal-extension}). We illustrate these extensions with the quadratic cost, the non-twisted cost of \cite[Example~3.1]{mccann2012rectifiability}, and a repulsive harmonic cost with three marginals on the circle.

\subsection{Compactly supported marginals on \texorpdfstring{\(\R^d\)}{Rd}}
\label{subsec: euclidean-extension}
In accordance with \Cref{sec: framework}, where \(M_1\) and \(M_2\) are compact, we take for \(M_i\) compact convex subsets of \(\R^d\) containing the supports of the marginals in their interior, and we replace \ref{H1}--\ref{H4} by the following assumptions.
\begin{enumerate}[(H1')]
\item\label{H1'} For \(i=1,2\), \(M_i\subset\R^d\) is a compact convex set, and \(\Omega=M_1\times M_2\). The cost \(c\) is continuous on \(\Omega\), and we fix continuous Kantorovich potentials \(\overline\varphi_1,\overline\varphi_2\). The cost is \(C^2\) on \(\interior\Omega\) and non-degenerate on an open neighborhood \(U\subset\interior\Omega\) of \(\Sigma\cap\interior\Omega\): \(D^2_{x_1x_2}c(z)\) is invertible for every \(z\in U\).
\item\label{H2'} The duality gap \(E=c-\overline\varphi_1\oplus\overline\varphi_2\geq0\) belongs to \(C^{2,1}(\interior\Omega)\); in particular, \(\hess E\) is bounded on \(\interior\Omega\).
\item\label{H3'} The set \(\Sigma\cap\interior\Omega\) is a nonempty \(C^{1,1}\) submanifold of \(\interior\Omega\) without boundary, of dimension \(d\). As before, \(k\defeq d\) denotes its codimension.
\item\label{H4'} For \(i=1,2\), \(\mu_i=\rho_i\dd x\), where \(\rho_i\) vanishes outside a compact set \(K_i\subset\interior M_i\), is Lipschitz on \(K_i\), and satisfies \(0<\alpha\leq\rho_i\leq\beta\) on \(K_i\). Moreover, setting \(\mathcal K\defeq K_1\times K_2\), we have \(\mathcal H^d(\Sigma\cap\partial\mathcal K)=0\).
\end{enumerate}
We set \(\rho(x_1,x_2)\defeq\rho_1(x_1)\rho_2(x_2)\), so that \(\mu^\otimes=\mu_1\otimes\mu_2=\rho\dd x=\rho\mathbf 1_{\mathcal K}\dd x\). Compared with the torus, the new features are that \(\Sigma\) may extend beyond \(\mathcal K\) and cross its boundary, and that \(\mu^\otimes\) is discontinuous across \(\partial\mathcal K\).

\subsubsection{Convergence of the Gibbs measure and the geometric Laplace's method}
\label{subsec: Convergence of the Gibbs measure and the geometric Laplace's method}

This section is devoted to the proof of the convergence of the Gibbs measure in the case of compactly supported marginals on \(\R^d\). The result slightly extends the setting considered in \cite{hwang1980laplace}, since the metric and tubular neighborhoods of the contact set need not coincide. The same proof, with \(\mathcal K=\omega=\Omega\) (so that \(\mathbf 1_{\mathcal K}\equiv1\), \(\Sigma\cap\omega=\Sigma\), and \Cref{lemma: tubular vs regular neighborhood} follows from \Cref{subsec:structure}), gives \Cref{thm: convergence-Gibbs-measure} on the torus.

\begin{theorem}[Convergence of the Gibbs measure]
\label{thm: euclidean-Gibbs}
Under assumptions \ref{H1'}--\ref{H4'},
    \begin{equation}
        \gibbs \xrightharpoonup[]{\ast}\mathbb{P}_0,
    \end{equation}
where \[\mathbb P_0 =\frac{1}{Z_0}
 (\det \hess^\perp E)^{-1/2}
 \rho\mathbf{1}_{\mathcal K}\mathcal H^d_{|\Sigma} \quad  \text{and} \quad Z_0=\int_{\Sigma\cap \mathcal K}\frac{\rho_1(z_1)\rho_2(z_2)}
 {\sqrt{\det \hess^\perp E(z)}}\,\dd\mathcal H^d(z). \]
Moreover, \(Z_\varepsilon=(2\pi\varepsilon)^{k/2}(Z_0+o(1))\).
\end{theorem}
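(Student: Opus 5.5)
The plan is to prove the statement via the geometric Laplace method, in its unnormalized form. The target is the following: for every continuous \(\phi\) on \(\Omega\),
\[
\varepsilon^{-k/2}\int_\Omega \phi\, e^{-E/\varepsilon}\rho\mathbf 1_{\mathcal K}\dd x\longrightarrow (2\pi)^{k/2}\int_{\Sigma\cap\mathcal K}\phi\,(\det\hess^\perp E)^{-1/2}\rho\dd\mathcal H^d .
\]
Taking \(\phi\equiv1\) then gives \(Z_\varepsilon=(2\pi\varepsilon)^{k/2}(Z_0+o(1))\). Since \(Z_0>0\) (because \(\rho\geq\alpha\) on \(\mathcal K\) and \(\Sigma\cap\interior\mathcal K\) carries positive \(\mathcal H^d\) measure, as optimal plans live there), dividing yields the weak convergence \(\gibbs\rightharpoonup\mathbb P_0\).

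\textbf{Step 1: localization.}
The first task is to discard the region far from the contact set. Set \(\omega\defeq\) a relatively compact open neighborhood of \(\Sigma\cap\mathcal K\) inside \(U\). On \(\mathcal K\), \(E\) is continuous and vanishes exactly on \(\Sigma\), so outside the tube \(\{\dist(\cdot,\Sigma\cap\omega)<\delta\}\) we have \(E\geq c_\delta>0\) by compactness. That contribution is then \(O(e^{-c_\delta/\varepsilon})=o(\varepsilon^{k/2})\).

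Inside the tube, I need an analogue of \Cref{subsec:structure} for the noncompact manifold \(\Sigma\cap\interior\Omega\). This is the content of the lemma \Cref{lemma: tubular vs regular neighborhood}, which I would prove first: on the compact piece \(\overline\omega\cap\Sigma\), the \(C^{1,1}\) submanifold has a uniform local reach. Hence \(\Phi(z,s)=z+n(z)s\) is bi-Lipschitz from \((\Sigma\cap\omega')\times\ball_{r}\) onto a neighborhood containing \(\{\dist(\cdot,\Sigma\cap\omega)<r\}\cap\mathcal K\), for some \(r>0\) and a slightly larger \(\omega'\). The Lipschitz normal frame, \Cref{lemma: normal-frame}, and the Jacobian expansion \eqref{eq: tube-jacobian-expansion} carry over locally. \Cref{lemma: quadratic-detachment} also holds there, with the same Taylor argument, since \(E\in C^{2,1}\) and the normal ellipticity \eqref{eq: normal-ellipticity} is uniform on compact pieces of \(\Sigma\).

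\textbf{Step 2: tube integral and rescaling.}
Using the area formula \eqref{eq: tube-area-formula} and the rescaling \(s\mapsto\sqrt\varepsilon s\), the tube integral becomes
\[
\int_{\Sigma\cap\omega'}\int_{\ball_{r/\sqrt\varepsilon}}(\phi\rho\mathbf 1_{\mathcal K})(\Phi(z,\sqrt\varepsilon s))\,e^{-E(\Phi(z,\sqrt\varepsilon s))/\varepsilon}\mathcal J_\Phi(z,\sqrt\varepsilon s)\dd s\dd\mathcal H^d(z).
\]
The integrand converges pointwise as follows:
\begin{itemize}
\item \(E(\Phi(z,\sqrt\varepsilon s))/\varepsilon\to\frac12\Hn z(s,s)\), by the Taylor bound \(|\mathcal R|\le C|\eta|^3\);
\item \(\mathcal J_\Phi\to1\);
\item \(\phi\rho\) is continuous on \(\mathcal K\).
\end{itemize}
It is dominated by \(C e^{-\underline C|s|^2}\) thanks to \Cref{lemma: quadratic-detachment}. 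The limit then follows by dominated convergence, with \(\int e^{-\Hn z(s,s)/2}\dd s=(2\pi)^{k/2}(\det\Hn z)^{-1/2}\).

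\textbf{Main obstacle: the indicator \(\mathbf 1_{\mathcal K}\).}
The genuine difficulty is the discontinuous factor \(\mathbf 1_{\mathcal K}(\Phi(z,\sqrt\varepsilon s))\). This is exactly why the \(\Sigma\)-tube and the \(\mathcal K\)-restricted metric neighborhood differ. For \(z\in\Sigma\cap\interior\mathcal K\) the factor tends to \(1\), and for \(z\in\Sigma\setminus\mathcal K\) it tends to \(0\) pointwise in \(s\), since \(\mathcal K\) is closed. Boundary points are negligible by the assumption \(\mathcal H^d(\Sigma\cap\partial\mathcal K)=0\) in \ref{H4'}. So pointwise convergence holds \(\mathcal H^d\otimes\dd s\)-a.e., and dominated convergence still applies.

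I must also make sure that points of \(\mathcal K\) near \(\Sigma\) whose nearest point on \(\Sigma\) lies outside \(\mathcal K\), or outside \(\omega\), are captured by the parametrization. This is why \(\omega'\) is chosen slightly larger than \(\omega\), with \(r\) small. Finally, near \(\partial\Omega\) I would use that \(\mathcal K\subset\interior\Omega\) has positive distance to \(\partial\Omega\), so the Hessian bounds of \ref{H2'} apply on the relevant compact region.
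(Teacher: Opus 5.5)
Your proposal is correct and follows essentially the paper's argument. The shared ingredients are:
\begin{itemize}
\item tubular coordinates over an enlarged compact piece of \(\Sigma\). The paper takes \(\mathcal K\Subset\omega\) and proves \(\mathcal K\cap\Sigma_r\subset\mathcal T_r\); you take \(\omega\) around \(\Sigma\cap\mathcal K\) and localize with \(\dist(\cdot,\Sigma\cap\omega)\), which works equally well;
\item an exponentially small contribution away from \(\Sigma\);
\item rescaling and dominated convergence, with the indicator \(\mathbf 1_{\mathcal K}\) handled by \(\mathcal H^d(\Sigma\cap\partial\mathcal K)=0\);
\item \(Z_0>0\) obtained from optimal plans.
\end{itemize}
The only difference is that you run a single dominated convergence on the growing balls \(\ball_{r/\sqrt\varepsilon}\), using the Gaussian majorant from quadratic detachment. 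The paper instead truncates at \(\ball_R\), bounds the annulus \(\mathcal T_r\setminus\mathcal T_{R\sqrt\varepsilon}\) by a Gaussian tail, and then lets \(R\to\infty\); the two routes are equivalent.
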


Before turning to the proof, we first establish a preliminary result. We fix an open set \(\omega\) such that \(\mathcal K\Subset\omega\Subset\interior\Omega\). Since \(\Sigma\cap\overline\omega\) is a compact subset of the \(C^{1,1}\) submanifold \(\Sigma\cap\interior\Omega\), the constructions of \Cref{subsec:structure} and the estimates of \Cref{prop: twist-geometry,lemma: quadratic-detachment} hold on \(\Sigma\cap\overline\omega\), with uniform constants. For \(r>0\) small enough, we denote by
\[
\mathcal T_r\defeq\Phi((\Sigma\cap\omega)\times\ball_r)=\{z+v:z\in\Sigma\cap\omega,\ v\in N_z\Sigma,\ |v|<r\}
\]
the tubular neighborhood of radius \(r\) of \(\Sigma\cap\omega\), not to be confused with the \(r\)-neighborhood \(\Sigma_r=\{\x\in\Omega:\dist(\x,\Sigma)<r\}\) of \(\Sigma\).

\begin{lemma}
\label{lemma: tubular vs regular neighborhood}
Under assumptions \ref{H1'}--\ref{H4'}, there exists \(r_0>0\) such that, for every \(0<r\leq r_0\),
\[
\mathcal K\cap\Sigma_r\subset\mathcal T_r.
\]
Consequently,
\[
x\in\mathcal K\setminus\mathcal T_r
\quad\Longrightarrow\quad
\dist(x,\Sigma)\geq r.
\]
\end{lemma}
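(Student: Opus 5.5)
The plan is a compactness argument based on nearest-point projections onto \(\Sigma\). The geometric inputs are that \(\mathcal K\Subset\omega\) gives a positive gap \(\delta\defeq\dist(\mathcal K,\partial\omega)>0\), and that \(\Sigma\cap\overline\omega\) is a compact piece of the \(C^{1,1}\) submanifold \(\Sigma\cap\interior\Omega\), hence has positive reach.

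First, I would handle the issue that \(\Sigma\) itself may not be closed. It may accumulate on \(\partial\Omega\), since only \(\Sigma\cap\interior\Omega\) is assumed to be a manifold. I would note that \(\Sigma=\{E=0\}\) is closed in \(\Omega\) by continuity of \(E\), so it is compact, and nearest points in \(\Sigma\) exist. Choose \(r_0<\delta/2\), and small enough that the tubular construction of \Cref{subsec:structure} works on a neighborhood of \(\Sigma\cap\overline\omega\). This means: on the compact set \(\Sigma\cap\overline{\omega'}\), for some \(\omega'\) with \(\omega\Subset\omega'\Subset\interior\Omega\), the normal exponential map is injective on normal vectors of length \(<r_0\), and nearest-point projections onto \(\Sigma\cap\overline{\omega'}\) of points within distance \(r_0\) are unique.

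Second, fix \(x\in\mathcal K\) with \(\dist(x,\Sigma)<r\le r_0\), and pick \(z\in\Sigma\) with \(|x-z|=\dist(x,\Sigma)<r\). Then \(\dist(z,\mathcal K)<r<\delta/2\), so \(z\in\omega\subset\interior\Omega\). Hence \(z\) lies in the smooth part \(\Sigma\cap\omega\). Since \(z\) minimizes \(y\mapsto|x-y|^2\) over \(\Sigma\), and \(\Sigma\) is a \(C^{1}\) manifold near \(z\), the first-order condition gives \(x-z\perp T_z\Sigma\). So \(v\defeq x-z\in N_z\Sigma\) with \(|v|<r\), and \(x=z+v\in\mathcal T_r\). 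No segment from \(\mathcal K\) to a boundary point of \(\Sigma\) can be the minimizer, because such points are at distance \(\ge\delta>r\).

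The consequence is just the contrapositive: if \(x\in\mathcal K\setminus\mathcal T_r\), then \(\dist(x,\Sigma)\ge r\).

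The only delicate point is making sure the minimizer \(z\) lies in the region where \(\Sigma\) is a manifold. This is handled by the choice \(r_0<\delta/2\). Injectivity of the tubular map is not actually needed for the inclusion itself, only for later uses. So I expect no real obstacle beyond choosing \(r_0\) smaller than both the reach-type radius and \(\dist(\mathcal K,\partial\omega)\).
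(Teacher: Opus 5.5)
Your proposal is correct and follows essentially the same route as the paper: for \(x\in\mathcal K\cap\Sigma_r\), take a nearest point \(z\in\Sigma\), use \(r_0<\dist(\mathcal K,\R^{2d}\setminus\omega)\) to force \(z\in\Sigma\cap\omega\), and conclude that \(x-z\in N_z\Sigma\), so that \(x\in\mathcal T_r\). You are also right that the inclusion needs only the first-order optimality condition at \(z\), not uniqueness of the nearest-point projection (positive reach), which the paper invokes but does not actually use.
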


\begin{proof}
Set \(\eta\defeq\dist(\mathcal K,\R^{2d}\setminus\omega)>0\). Since \(\Sigma\) is a \(C^{1,1}\) submanifold, it has locally positive reach. By compactness of \(\Sigma\cap\overline\omega\), there exists \(r_0\in(0,\eta)\) such that the nearest-point projection \(\proj_\Sigma\) is uniquely defined on \(\mathcal K\cap\Sigma_{r_0}\). Let \(0<r\leq r_0\) and \(\x\in\mathcal K\cap\Sigma_r\). Then \(|\x-\proj_\Sigma(\x)|=\dist(\x,\Sigma)<r\). Since \(\x\in\mathcal K\) and \(r_0<\eta\), it follows that \(\proj_\Sigma(\x)\in\omega\). Moreover, \(\x-\proj_\Sigma(\x) \in N_{\proj_\Sigma(\x)}\Sigma\). Hence \(\x=\proj_\Sigma(\x)+\big(\x-\proj_\Sigma(\x)\big)\in\mathcal T_r\).
\end{proof}

The following figure illustrates the geometric reason for enlarging the base of the tubular neighborhood from \(\Sigma\cap\mathcal K\) to \(\Sigma\cap\omega\). This enlargement ensures that every point of \(\mathcal K\) lying outside the tubular neighborhood is uniformly separated from the whole contact set:
\[
x\in\mathcal K\setminus\mathcal T_r
\quad\Longrightarrow\quad
\dist(x,\Sigma)\geq r.
\]
This is what allows us to decompose the integrals in the proof below.

\begin{center}
\begin{tikzpicture}[scale=2.2]
  \draw[-] (0.3,0) -- (3.6,0) {};
  \draw[-] (0.3,0) -- (0.3,2) {};
  \draw[-] (0.3,2) -- (3.6,2) {};
  \draw[-] (3.6,2) -- (3.6,0) {};

  \draw[-] (-0.7,-0.4) -- (4.4,-0.4) {};
  \draw[-] (-0.7,-0.4) -- (-0.7,2.4) {};
  \draw[-] (-0.7,2.4) -- (4.4,2.4) {};
  \draw[-] (4.4,-0.4) -- (4.4,2.4) {};
  
\draw[thin, red , smooth]
    plot coordinates {(-0.7, 0.3) (-0.5,0.2) (-0.1,0.13)  (1,0.9) (2,1.5) (3,1.2) (4,1.99) (4.2,2.2) (4.4,2.3)};

\draw[thin, orange , smooth]
    plot coordinates { (0.12,0.67) (0.3,0.8)   (1,1.3) (2,1.9) (2.88,1.61) (3.43,1.9) };

\draw[thin, orange , smooth]
    plot coordinates { (0.47,0.13)  (1,0.5) (2,1.1) (3,0.8) (3.8,1.31) };

\draw[thin, orange , smooth]
    plot coordinates { (0.47,0.13) (0.12,0.67) };
\draw[thin, orange , smooth]
    plot coordinates { (3.8,1.31) (3.43,1.9) };

\fill[red!10] (0.3,0.13) -- (0.47,0.13) -- (0.3,0.385) -- cycle;
\fill[red!10] (3.43,1.9) -- (3.6,1.9) -- (3.6,1.63) -- cycle;

  \node[thin, red] at (3,1.3) {\small \(\Sigma\)};
  \node[thin, orange] at (2.9,0.65) {\small \(\Phi((\Sigma\cap\mathcal{K})\times\ball_{r})\)};
  \node[] at (0.5,1.8) {\(\mathcal{K}\)};
  \node[] at (-0.6,2.2) {\(\Omega\)};
\end{tikzpicture}
\end{center}

If the tubular neighborhood were constructed only around \(\Sigma\cap\mathcal K\), the shaded regions could contain points of \(\mathcal K\) that remain close to portions of \(\Sigma\) lying outside \(\mathcal K\). Thus, the distance to \(\Sigma\) would not be controlled solely by lying outside \(\Phi((\Sigma\cap\mathcal K)\times\ball_r)\).

\begin{proof}
Let \(r_0\) be as in \Cref{lemma: tubular vs regular neighborhood}. We fix \(r\in(0,r_0]\) such that the bounds of \Cref{lemma: quadratic-detachment} hold on \(\mathcal T_r\), and \(R>0\) so that \(R\sqrt\varepsilon<r\) for \(\varepsilon\) small enough. Let \(\phi\in C(\Omega)\) and introduce the unnormalized Gibbs measure
\[\dd \unnormalizedgibbs= (2\pi\varepsilon)^{-k/2} \exp\left(-\frac{E}{\varepsilon}\right)\dd \mu^\otimes,\]
so that \(\gibbs=\unnormalizedgibbs/\unnormalizedgibbs(\Omega)\). We decompose \(\Omega\) as
\begin{align*}
\int_\Omega \phi\dd\unnormalizedgibbs
&=
\int_{\Sigma_{R\sqrt{\varepsilon}}}\phi\dd\unnormalizedgibbs
+
\int_{\Sigma_{r}\setminus \Sigma_{R\sqrt{\varepsilon}}}
\phi\dd\unnormalizedgibbs
+
\int_{\Omega\setminus\Sigma_{r}}
\phi\dd\unnormalizedgibbs
\\
&=\int_{\mathcal T_{R\sqrt{\varepsilon}}}\phi\dd\unnormalizedgibbs
+
\int_{\mathcal T_{r}\setminus \mathcal T_{R\sqrt{\varepsilon}}}
\phi\dd\unnormalizedgibbs
+
\int_{\Omega\setminus \Sigma_{r}}
\phi\dd\unnormalizedgibbs
\\
&\defeq
I_{1,\varepsilon,R}(\phi)
+
I_{2,\varepsilon,R}(\phi)
+
I_{3,\varepsilon}(\phi),
\end{align*}
where, for the second equality, we used that \(\unnormalizedgibbs\) is supported on \(\mathcal K\) and that \(\mathcal K\cap\Sigma_\rho=\mathcal K\cap\mathcal T_\rho\) for \(0<\rho\leq r\), by \Cref{lemma: tubular vs regular neighborhood} and since \(\mathcal T_\rho\subset\Sigma_\rho\). We analyze these three terms separately.

\textbf{Step 1. Estimate on \(\mathcal T_{R\sqrt{\varepsilon}}\).} Using the tubular coordinates induced by \(\Phi\), we write \(E(z,s)\) for \(E(\Phi(z,s))\) and define
\[
a(z,s)
\defeq
\phi\bigl(\Phi(z,s)\bigr)
\rho(\Phi(z,s))
\mathcal J_{\Phi}(z,s)\mathbf{1}_{\mathcal K}(\Phi(z,s)),
\]
for \(\mathcal H^{d}\otimes \mathcal L^k\)-a.e. \((z,s)\in(\Sigma\cap\omega)\times \ball_{r}\). Note that \(|a|\leq\beta^2\|\phi\|_\infty C_J\), where \(C_J\) bounds \(\mathcal J_\Phi\) on \((\Sigma\cap\overline\omega)\times\ball_r\). By the area formula \eqref{eq: tube-area-formula},
\[
I_{1,\varepsilon,R}(\phi)
=
(2\pi\varepsilon)^{-k/2}
\int_{\Sigma\cap\omega}
\int_{\ball_{R\sqrt\varepsilon}}
e^{-E(z,s)/\varepsilon}
a(z,s)\dd s \dd \mathcal H^d_{|\Sigma}(z).
\]
Since \(E(z)=0\) and \(\nabla E(z)=0\) for \(z\in\Sigma\), and since \(E\in C^{2,1}\), we have
\[ E(z,s) = \frac12 \Hn{z}(s,s) + \mathcal R(z,s),
\qquad
|\mathcal R(z,s)|\leq C|s|^3 \quad \text{for all } (z,s)\in (\Sigma\cap \overline\omega)\times \ball_r.
\]
Performing the change of variables \(s=\sqrt{\varepsilon}\,\eta\), \(\dd s=\varepsilon^{k/2}\dd \eta\), and then relabeling the normal variable, we obtain
\begin{equation}
\label{eq: I1-after-rescaling}
I_{1,\varepsilon,R}(\phi)
=(2\pi)^{-k/2}
\int_{\Sigma\cap\omega}
\int_{\ball_R}
e^{
-\frac{1}{2}
\Hn{z}(s,s)
-\frac{1}{\varepsilon}\mathcal R(z,\sqrt{\varepsilon }s)}
a(z,\sqrt{\varepsilon }s)\dd s\dd \mathcal H^d_{|\Sigma}(z).
\end{equation}
As \(\varepsilon\to0\), we have \(|\mathcal R(z,\sqrt\varepsilon s)|/\varepsilon\leq C\sqrt\varepsilon|s|^3\to0\) and \(\mathcal J_\Phi(z,\sqrt\varepsilon s)\to\mathcal J_\Phi(z,0)=1\) by \Cref{lemma: determinant of Phi}. Moreover, \(\Phi(z,\sqrt\varepsilon s)\to z\) and \(\mathbf 1_{\mathcal K}\) is locally constant on \(\R^{2d}\setminus\partial\mathcal K\), so that, \(\rho\) being continuous on \(\mathcal K\), \((\rho\mathbf 1_{\mathcal K})(\Phi(z,\sqrt\varepsilon s))\to(\rho\mathbf 1_{\mathcal K})(z)\) for every \(z\in\Sigma\setminus\partial\mathcal K\); since \(\mathcal H^d(\Sigma\cap\partial\mathcal K)=0\), we get \(a(z,\sqrt\varepsilon s)\to\phi(z)\rho(z)\mathbf 1_{\mathcal K}(z)\) for \(\mathcal H^d\otimes\mathcal L^k\)-a.e.\ \((z,s)\in(\Sigma\cap\omega)\times\ball_R\). Since the integrand of \eqref{eq: I1-after-rescaling} is bounded by \(e^{C\sqrt\varepsilon R^3}\beta^2\|\phi\|_\infty C_J\leq2\beta^2\|\phi\|_\infty C_J\) for \(\varepsilon\) small enough, and \(\mathcal H^d(\Sigma\cap\omega)\) is finite, the dominated convergence theorem gives
\[
\lim_{\varepsilon\to 0}I_{1,\varepsilon,R}(\phi)
=(2\pi)^{-k/2}\int_{\Sigma\cap\mathcal K}\phi(z)\rho(z)\int_{\ball_R} e^{-\Hn{z}(s,s)/2}\dd s\dd\mathcal H^d_{|\Sigma}(z).
\]
Next, since \((2\pi)^{-k/2}\int_{\R^k}e^{-\Hn{z}(s,s)/2}\dd s=\bigl(\det\Hn{z}\bigr)^{-1/2}\), the dominated convergence theorem gives
\[
\lim_{R\to +\infty} \lim_{\varepsilon \to 0}I_{1 ,\varepsilon , R}(\phi)=\int_{\Sigma\cap\mathcal K}\phi(z)\bigl(\det \Hn{z}\bigr)^{-1/2}\rho(z)\dd \mathcal H^d_{|\Sigma}(z).
\]

\textbf{Step 2. Estimate on \(\mathcal T_{r}\setminus  \mathcal T_{R\sqrt{\varepsilon}}\).} As in Step~1, the area formula and the change of variables \(s=\sqrt\varepsilon\,\eta\) give
\[
\big|I_{2,\varepsilon,R}(\phi)\big|
\leq(2\pi)^{-k/2}\beta^2\|\phi\|_\infty C_J
\int_{\Sigma\cap\omega}\int_{R\leq|s|<r/\sqrt\varepsilon}
e^{-E(z,\sqrt\varepsilon s)/\varepsilon}\dd s\dd\mathcal H^d_{|\Sigma}(z).
\]
By \Cref{lemma: quadratic-detachment} and \eqref{eq: normal-distance}, \(E(z,\sqrt\varepsilon s)\geq\underline{C}\dist\bigl(\Phi(z,\sqrt\varepsilon s),\Sigma\bigr)^2=\underline C\varepsilon|s|^2\) for \(|s|<r/\sqrt\varepsilon\). Hence, uniformly in \(\varepsilon\),
\[
\big|I_{2,\varepsilon,R}(\phi)\big|
\leq(2\pi)^{-k/2}\beta^2\|\phi\|_\infty C_J\,\mathcal H^d(\Sigma\cap\omega)\int_{\R^k\setminus\ball_R}e^{-\underline C|s|^2}\dd s
\xrightarrow[R\to+\infty]{}0,
\]
as the tail of an integrable Gaussian. This proves that the contribution of \(\mathcal T_r\setminus\mathcal T_{R\sqrt\varepsilon}\) is negligible.

\textbf{Step 3. Estimate on \(\Omega\setminus \Sigma_{r}\).}
If \(\mathcal K\setminus \Sigma_{r}= \emptyset\) there is nothing to do. Otherwise, \(\mathcal K\setminus\Sigma_r\) is a compact set which does not meet \(\Sigma\), so that \(E>0\) on it, and we denote
\[
\delta \defeq
\min_{\mathcal K\setminus\Sigma_r}E>0.
\]
Since \(\unnormalizedgibbs\) is supported on \(\mathcal K\) and \(\mu^\otimes(\mathcal K)=1\),
\[
\big|I_{3,\varepsilon}(\phi)\big|
\leq (2\pi\varepsilon)^{-k/2} \|\phi\|_{\infty}
e^{-\delta/\varepsilon}\xrightarrow[\varepsilon \to 0]{} 0.
\]

\textbf{Conclusion.} Set \(L(\phi)\defeq\int_{\Sigma\cap\mathcal K}\phi(z)\bigl(\det \Hn{z}\bigr)^{-1/2}\rho(z)\dd\mathcal H^d_{|\Sigma}(z)\). Since \(\int_\Omega\phi\dd\unnormalizedgibbs\) does not depend on \(R\), Steps~1--3 give, for every \(R>0\),
\[
\limsup_{\varepsilon\to0}\left|\int_\Omega\phi\dd\unnormalizedgibbs-L(\phi)\right|
\leq\left|\lim_{\varepsilon\to0}I_{1,\varepsilon,R}(\phi)-L(\phi)\right|+\sup_{\varepsilon}\big|I_{2,\varepsilon,R}(\phi)\big|,
\]
and the right-hand side tends to \(0\) as \(R\to+\infty\). In other words,
\[
\unnormalizedgibbs
\xrightharpoonup[]{\ast}
\bigl(\det \hess^\perp E\bigr)^{-1/2}
\rho\mathbf 1_{\mathcal K}
\mathcal H^d_{|\Sigma}.
\]
Taking \(\phi\equiv1\), we get \(\unnormalizedgibbs(\Omega)=(2\pi\varepsilon)^{-k/2}Z_\varepsilon\to Z_0\), which is the asymptotics of \(Z_\varepsilon\). Moreover, \(Z_0>0\): an optimal plan is concentrated on \(\Sigma\cap\mathcal K\), so that \(\mu_1\) is concentrated on \(\proj_{M_1}(\Sigma\cap\mathcal K)\), and since \(\mu_1\) is absolutely continuous and \(\proj_{M_1}\) is \(1\)-Lipschitz, \(0<\mathcal L^d(\proj_{M_1}(\Sigma\cap\mathcal K))\leq\mathcal H^d(\Sigma\cap\mathcal K)\). Thus
\[\gibbs = \frac{\unnormalizedgibbs}{\unnormalizedgibbs(\Omega)}
\xrightharpoonup[]{\ast}
\frac1{Z_0}
\bigl(\det \hess^\perp E\bigr)^{-1/2}
\rho\mathbf1_{\mathcal K}
\mathcal H^d_{|\Sigma},
\]
which proves the claim.
\end{proof}

\begin{center}
\begin{tikzpicture}[scale=2.2]
  \draw[-] (0,0) -- (4,0) {};
  \draw[-] (0,0) -- (0,2) {};
  \draw[-] (0,2) -- (4,2) {};
  \draw[-] (4,2) -- (4,0) {};
  \draw[dashed] (-0.08,-0.2) -- (4.22,-0.2) {};
  \draw[dashed] (-0.08,-0.2) -- (-0.08,2.2) {};
  \draw[dashed] (-0.08,2.2) -- (4.22,2.2) {};
  \draw[dashed] (4.22,-0.2) -- (4.22,2.2) {};
  \draw[-] (-0.7,-0.4) -- (4.4,-0.4) {};
  \draw[-] (-0.7,-0.4) -- (-0.7,2.4) {};
  \draw[-] (-0.7,2.4) -- (4.4,2.4) {};
  \draw[-] (4.4,-0.4) -- (4.4,2.4) {};
\draw[thin, red , smooth]
    plot coordinates {(-0.7, 0.3) (-0.5,0.2) (-0.1,0.15)  (1,0.9) (2,1.5) (3,1.2) (4,2)  (4.4,2.37)};
\draw[thin, blue , smooth]
    plot coordinates {(0,0.4)   (1,1.1) (2,1.7) (3,1.4) (3.81,2) };    
\draw[thin, blue , smooth]
    plot coordinates { (0,0)  (1,0.7) (2,1.3) (3,1) (4,1.8) };
\draw[thin, blue , smooth, dashed]
    plot coordinates {(-0.15,0.29) (0,0.4)};
\draw[thin, blue , smooth, dashed]
    plot coordinates { (3.81,2) (4.13,2.28)};
\draw[thin, blue , smooth, dashed]
    plot coordinates { (4,1.8) (4.3,2.09)};
\draw[thin, blue, smooth, dashed]
plot coordinates {(0,0) (-0.16,0.3)};
\draw[thin, blue, smooth, dashed]
plot coordinates {(4.3,2.09) (4.13,2.28)};
\draw[thin, orange , smooth]
    plot coordinates {(0,0.6)   (1,1.3) (2,1.9) (3,1.6) (3.6,2) };
\draw[thin, orange , smooth]
    plot coordinates { (0.3,0)  (1,0.5) (2,1.1) (3,0.8) (4,1.6) };
  \node[thin, red] at (2.9,1.11) {\small \(\Sigma\)};
  \node[thin, blue] at (2.9,0.9) {\small \(\mathcal T_{R\sqrt\varepsilon}\)};
  \node[thin, orange] at (2.9,0.69) {\small \(\Sigma_{r}\cap \mathcal K\)};
  \node[] at (0.2,1.8) {\(\mathcal{K}\)};
  \node[] at (0,2.1) {\(\omega\)};
  \node[] at (-0.6,2.2) {\(\Omega\)};
\end{tikzpicture}
\end{center}
The previous figure provides a schematic illustration of the different neighborhoods involved in the proof. The compact set \(\mathcal K\) is contained in the intermediate neighborhood \(\omega\), which is itself contained in \(\Omega\). The contact set \(\Sigma\) may extend beyond \(\mathcal K\), and the tubular neighborhoods considered in the proof are therefore constructed around the larger portion \(\Sigma\cap\omega\).

\subsubsection{First-order entropic selection}

\begin{theorem}[Compactly supported Euclidean marginals]
\label{thm: euclidean-selection}
Assume the hypotheses \ref{H1'}--\ref{H4'} are satisfied and define
\[\mathcal F^{(0)}(\gamma)\defeq\int_{\Omega}c\dd \gamma+\chi_{\Pi(\mu_1,\mu_2)}(\gamma),
\qquad
\mathcal F_{\varepsilon}(\gamma)=\int_{\Omega}c\dd \gamma+\varepsilon\entropy(\gamma|\mu_1\otimes\mu_2)+\chi_{\Pi(\mu_1,\mu_2)}(\gamma).\]
Then, with \(\mathbb P_0\) and \(Z_0\) given by \Cref{thm: euclidean-Gibbs},
\begin{equation}
\label{eq: gamma-expansion-R}
 \mathcal F_\varepsilon-\frac{k}{2}\varepsilon\log(1/\varepsilon)
 \stackrel{\Gamma}{=}
 \mathcal F^{(0)}+\varepsilon\Bigl(
 \entropy\left(\cdot\,|\,\mathbb P_0\right)+\chi_{\Pi(\mu_1,\mu_2)}(\cdot)
 -\log Z_0-\frac{k}{2}\log(2\pi)
 \Bigr)+o(\varepsilon).
\end{equation}
Moreover, the following asymptotics of the energy holds
\begin{equation}
    \label{eq: energy-expansion-R}
    \min\mathcal{F}_\varepsilon=\min \mathcal F^{(0)}+\dfrac{k}{2}\varepsilon\log(1/\varepsilon)+\varepsilon C_0+o(\varepsilon),
\end{equation}
where \(C_0\defeq  \min_{\gamma\in\Pi(\mu_1,\mu_2)}  \entropy(\gamma\,|\,\mathbb P_0)-\log Z_0-\frac{k}{2}\log(2\pi)\), and the minimizer of \(\mathcal F_\varepsilon\) converges weakly to the unique minimizer of \(\entropy(\cdot\,|\,\mathbb P_0)\) on \(\Pi(\mu_1,\mu_2)\).
\end{theorem}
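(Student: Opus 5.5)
The plan is to follow the proof of Theorem~A step by step, replacing the torus ingredients by their Euclidean counterparts. By \Cref{prop: EOT-Gibbs} and \Cref{thm: euclidean-Gibbs}, the identity \eqref{eq: G-vs-F} holds verbatim, with \(Z_\varepsilon=(2\pi\varepsilon)^{k/2}(Z_0+o(1))\). It therefore suffices to prove that \(\mathcal G_\varepsilon\) \(\Gamma\)-converges to \(\mathcal G_0\), where these are defined as in \eqref{eq: G-functionals}, with \(\mathbb P_\varepsilon\) and \(\mathbb P_0\) now given by \Cref{thm: euclidean-Gibbs}.

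\textbf{\(\Gamma\)-\(\liminf\).} The argument is unchanged. We use that \(\Pi(\mu_1,\mu_2)\) is closed in \(\mathcal P(\Omega)\), that the relative entropy is jointly lower semicontinuous, and that \(\mathbb P_\varepsilon\rightharpoonup\mathbb P_0\).

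\textbf{Finiteness of \(\min\mathcal G_0\).} Any optimal plan \(\overline\gamma\) is concentrated on \(\Sigma\cap\mathcal K\). The Lipschitz-projection argument gives \(\overline\gamma\le\beta\,\mathcal H^d_{|\Sigma\cap\mathcal K}\). The density of \(\mathbb P_0\) is bounded below on \(\Sigma\cap\mathcal K\), since \(\rho\ge\alpha^2\) there and \(\hess^\perp E\) is bounded on the compact set \(\Sigma\cap\mathcal K\subset\Sigma\cap\overline\omega\). Hence \(\entropy(\overline\gamma\,|\,\mathbb P_0)<\infty\). Once the \(\Gamma\)-convergence is established, the energy expansion and the selection statement then follow exactly as in Step~3 of the proof of Theorem~A.

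\textbf{\(\Gamma\)-\(\limsup\).} Fix \(\overline\gamma=\overline\rho\,\mathcal H^d_{|\Sigma}\in\Pi(\mu_1,\mu_2)\) with finite entropy. It is supported in \(\Sigma\cap\mathcal K\), which is a compact subset of the \(C^{1,1}\) manifold \(\Sigma\cap\interior\Omega\). We build the normal perturbation \(\widetilde\gamma_{\varepsilon,R}\) with the tubular map \(\Phi\) defined on \((\Sigma\cap\omega)\times\ball_r\). For \(\sqrt\varepsilon R\) small it is supported in the \(\sqrt\varepsilon R\)-neighbourhood of \(\mathcal K\), which lies inside \(\omega\).

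The main new difficulty appears here. In the Euclidean setting \(\mu^\otimes=\rho\mathbf 1_{\mathcal K}\,\dd x\) vanishes outside \(\mathcal K\). If part of \(\widetilde\gamma_{\varepsilon,R}\) leaves \(\mathcal K\), then \(\entropy(\widetilde\gamma_{\varepsilon,R}\,|\,\mathbb P_\varepsilon)=+\infty\), and \Cref{lemma: convergence entropy} breaks down. I would avoid this by never evaluating that quantity. The shadow \(\gamma_{\varepsilon,R}\) has marginals \(\mu_i\), so it lives on \(\mathcal K\), and its entropy is controlled by
\[
\entropy(\gamma_{\varepsilon,R}\,|\,\mu^\otimes)\le\entropy(\widetilde\gamma_{\varepsilon,R}\,|\,\widetilde\mu_1\otimes\widetilde\mu_2).
\]
The right-hand side only involves the perturbed marginals themselves, which are absolutely continuous with respect to Lebesgue measure. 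I would then redo the computation of \Cref{prop: control of the shadow entropy}, but with Lebesgue measure as the reference. In the estimate, \(\widetilde\mu_i\) replaces \(\mu_i\), and one adds the correction \(\sum_i\int\log\rho_i\,\dd\mu_i-\sum_i\int\log\rho_i\,\dd\widetilde\mu_i\). More robustly, one writes
\[
\entropy(\widetilde\gamma\,|\,\widetilde\mu_1\otimes\widetilde\mu_2)=\entropy(\widetilde\gamma\,|\,\mathcal L^{2d})-\sum_i\entropy(\widetilde\mu_i\,|\,\mathcal L^d).
\]
Entropy with respect to Lebesgue measure is displacement-lower-semicontinuous, and since \(W_2(\widetilde\mu_i,\mu_i)\to0\) we get \(\liminf\entropy(\widetilde\mu_i\,|\,\mathcal L^d)\ge\entropy(\mu_i\,|\,\mathcal L^d)\). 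The Lebesgue entropy \(\entropy(\widetilde\gamma\,|\,\mathcal L^{2d})\) is computed in tubular coordinates exactly as in \Cref{lemma: convergence entropy}: \(\int\log\overline\rho\,\dd\overline\gamma\), the terms \(-\log(\varepsilon^{k/2}Z_R)\) and \(-\int\log K\), and an \(O(\sqrt\varepsilon)\) Jacobian error. It involves no \(\rho\) at all. Combining this with the term \(\varepsilon^{-1}\int E\,\dd\gamma_{\varepsilon,R}\) reproduces the limit \(\entropy(\overline\gamma\,|\,\mathbb P_0)+b_R(\overline\gamma)\), provided \(\varepsilon^{-1}\int E\,\dd(\gamma_{\varepsilon,R}-\widetilde\gamma_{\varepsilon,R})\to0\). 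That last convergence is \Cref{prop: shadow-energy}, whose proof uses only \(C^{2,1}\) bounds of \(E\) on a neighbourhood of \(\mathcal K\), given by (H2'), and the marginal estimate.

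\textbf{Marginal estimate.} \Cref{thm: Wasserstein between the marginals} transfers directly. Cover the compact set \(\Sigma\cap\mathcal K\) by finitely many charts \(U_j\) on which the projections are \(C^{1,1}\) diffeomorphisms onto convex sets \(V_j\subset\R^d\). Since \(\R^d\) needs no identification, the flux and Benamou--Brenier argument of Steps~1--5 is unchanged, with each \(\rho_j\) merely \(L^1\) and compactly supported in \(V_j\). Finally, the diagonal argument in \(R\) completes the recovery sequence.

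I expect the step handling mass leaking out of \(\mathcal K\) to be the main obstacle, that is, rewriting the entropy bound relative to Lebesgue measure. The rest is routine transfer from the torus case.
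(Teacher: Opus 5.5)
Your proposal is correct and follows essentially the same route as the paper. It uses the same reduction to the $\Gamma$-convergence of $\mathcal G_\varepsilon$ and the same finiteness argument. For the $\Gamma$-$\limsup$ it takes the same key step: bounding $\entropy(\gamma_{\varepsilon,R}\,|\,\mu^\otimes)$ by $\entropy(\widetilde\gamma_{\varepsilon,R}\,|\,\mathcal L^{2d})-\sum_i\entropy(\widetilde\mu_{i,\varepsilon,R}\,|\,\mathcal L^{d})$, computing the first term (together with $\frac1\varepsilon\int E\,\dd\widetilde\gamma_{\varepsilon,R}+\log Z_\varepsilon$) in tubular coordinates, and controlling the marginal entropies by lower semicontinuity on a fixed compact set.

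Two small corrections. Only your ``more robust'' version works: the first variant would require $\int\log\rho_i\,\dd\widetilde\mu_{i,\varepsilon,R}$, which equals $-\infty$ because $\widetilde\mu_{i,\varepsilon,R}$ charges $M_i\setminus K_i$. In the shadow-energy estimate, bounds on a neighbourhood of $\mathcal K$ alone are not enough: what keeps the segments admissible is the convexity of $\Omega$, the global Hessian bound of (H2'), and taking nearest points in $\Sigma\cap\overline\omega$.
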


\begin{proof}
We follow the proof of \hyperref[thm: A]{Theorem~A} given in \Cref{sec: selection}, and only indicate the changes. By compactness of \(\Sigma\cap\overline\omega\), the tubular coordinates, the normal ellipticity and the quadratic detachment hold on \(\Sigma\cap\omega\) with uniform constants (see \Cref{subsec: Convergence of the Gibbs measure and the geometric Laplace's method}), and \Cref{thm: euclidean-Gibbs} replaces \Cref{thm: convergence-Gibbs-measure}. In particular, the \(\Gamma\)-\(\liminf\) inequality (Step~1 of \Cref{subsec: selection-value-asymptotics}) is unchanged, and so is Step~3, since optimal plans are concentrated on \(\Sigma\cap\mathcal K\), where the density of \(\mathbb P_0\) with respect to \(\mathcal H^d_{|\Sigma}\) is bounded from below.

It remains to adapt the \(\Gamma\)-\(\limsup\) inequality (Step~2 of \Cref{subsec: selection-value-asymptotics}). We fix \(\overline\gamma\in\Pi(\mu_1,\mu_2)\) with \(\entropy(\overline\gamma\,|\,\mathbb P_0)<\infty\), which is concentrated on \(\Sigma\cap\mathcal K\subset\Sigma\cap\omega\), and \(R\geq1\), and we define the normal perturbation \(\widetilde\gamma_{\varepsilon,R}\), its marginals \(\widetilde\mu_{i,\varepsilon,R}\) and its shadow \(\gamma_{\varepsilon,R}\) as in \Cref{subsec: normal-recovery,subsec: correction}.

\textbf{Step 1. The marginals.} The marginal estimate of \Cref{thm: Wasserstein between the marginals} still holds: its proof only uses inverse charts of \(\proj^\Sigma_{M_i}\), which exist on \(\Sigma\cap\omega\) by \Cref{prop: twist-geometry} and finitely many of which cover the compact set \(\Sigma\cap\mathcal K\), and the integrability of the chart densities of \(\overline\gamma\). The energy estimate of \Cref{prop: shadow-energy} still holds as well: since \(\Omega\) is convex, the segments used in its proof stay in \(\interior\Omega\), and \eqref{eq: gradient-distance} holds with \(\Sigma\) replaced by \(\Sigma\cap\overline\omega\). Indeed, a nearest point \(\bar y\) of \(y\in\interior\Omega\) in the compact set \(\Sigma\cap\overline\omega\) lies in \(\interior\Omega\), so that \(\nabla E(\bar y)=0\), and the segment \([\bar y,y]\) lies in \(\interior\Omega\), whence \(|\nabla E(y)|\leq C_E\dist(y,\Sigma\cap\overline\omega)\) with \(C_E\defeq\sup_{\interior\Omega}\|D^2E\|<\infty\) by \ref{H2'}. Hence, for fixed \(R\),
\[
W_2(\widetilde\mu_{i,\varepsilon,R},\mu_i)=o_R(\sqrt\varepsilon)\quad(i=1,2)
\qquad\text{and}\qquad
\frac1\varepsilon\int E\,\dd(\gamma_{\varepsilon,R}-\widetilde\gamma_{\varepsilon,R})=o_R(1).
\]

\textbf{Step 2. The entropy.} The entropy upper bound is the point to adapt since, as we wrote it, \(\entropy(\widetilde\gamma_{\varepsilon,R}\,|\,\mathbb{P}_\varepsilon)=+\infty\) in this case: \(\widetilde\gamma_{\varepsilon,R}\) charges \(\Omega\setminus\mathcal K\), where \(\gibbs\) vanishes. We therefore consider the entropy with respect to the modified marginals:
\[
 I_{\varepsilon,R}\defeq
 \entropy\left(\widetilde\gamma_{\varepsilon,R}\,
 \middle|\,\widetilde\mu_{1,\varepsilon,R}\otimes\widetilde\mu_{2,\varepsilon,R}\right).
\]
Since \(\dd\gibbs=Z_\varepsilon^{-1}e^{-E/\varepsilon}\dd(\mu_1\otimes\mu_2)\), the shadow inequality (\Cref{prop: bound entropy shadow}) and Step~1 give, for fixed \(R\),
\[\begin{split}
 \entropy(\gamma_{\varepsilon,R}\,|\,\mathbb P_\varepsilon)&=\entropy(\gamma_{\varepsilon,R}\,|\,\mu_1\otimes\mu_2)+\log Z_\varepsilon+\frac1\varepsilon\int E\,\dd\gamma_{\varepsilon,R}\\
 &\leq I_{\varepsilon,R}+\log Z_\varepsilon+\frac1\varepsilon\int E\,\dd(\gamma_{\varepsilon,R}-\widetilde\gamma_{\varepsilon,R})+\frac1\varepsilon\int E\,\dd\widetilde\gamma_{\varepsilon,R}\\
 &\leq I_{\varepsilon,R}
 +\frac1\varepsilon\int E\,\dd\widetilde\gamma_{\varepsilon,R}
 +\log Z_\varepsilon+o_R(1).
\end{split}
\]
It remains to show that
\begin{equation}
\label{eq: euclidean-entropy-limsup}
 \limsup_{\varepsilon\to0}\left[
 I_{\varepsilon,R}
 +\frac1\varepsilon\int E\,\dd\widetilde\gamma_{\varepsilon,R}
 +\log Z_\varepsilon\right]
 \leq\entropy(\overline\gamma\,|\,\mathbb P_0)+b_R(\overline\gamma).
\end{equation}
Since \(|\Phi(z,\sqrt\varepsilon s)-z|\leq\sqrt\varepsilon R\) for \(s\in\ball_R\) and \(\overline\gamma\) is concentrated on \(\mathcal K\), for \(\sqrt\varepsilon R\leq1\) the measures \(\widetilde\gamma_{\varepsilon,R}\) and \(\widetilde\mu_{i,\varepsilon,R}\) are concentrated on the fixed compact sets \(\mathcal K^1\defeq\{\x:\dist(\x,\mathcal K)\leq1\}\) and \(K_i^1\defeq\{x:\dist(x,K_i)\leq1\}\). For an absolutely continuous probability measure \(\nu\) concentrated on a bounded set, we denote by \(\entropy(\nu)\defeq\entropy(\nu\,|\,\mathcal L)\) its entropy with respect to the Lebesgue measure. Since \(\widetilde\mu_{1,\varepsilon,R}\otimes\widetilde\mu_{2,\varepsilon,R}\) has the product of the marginal densities as density, we have
\[
I_{\varepsilon,R}=\entropy(\widetilde\gamma_{\varepsilon,R})-\entropy(\widetilde\mu_{1,\varepsilon,R})-\entropy(\widetilde\mu_{2,\varepsilon,R}),
\]
all these entropies being finite, since \(\widetilde\gamma_{\varepsilon,R}\) has finite entropy and all the measures are concentrated on fixed bounded sets. Hence
\begin{equation}
\label{eq: euclidean-entropy-decomposition}
I_{\varepsilon,R}
+\frac1\varepsilon\int E\,\dd\widetilde\gamma_{\varepsilon,R}
+\log Z_\varepsilon
=\Bigl(\entropy(\widetilde\gamma_{\varepsilon,R})+\frac1\varepsilon\int E\,\dd\widetilde\gamma_{\varepsilon,R}+\log Z_\varepsilon\Bigr)-\sum_{i=1}^2\entropy\left(\widetilde\mu_{i,\varepsilon,R}\right),
\end{equation}
and we pass to the limit in the two terms of the right-hand side in the next two steps.

\textbf{Step 3. The first term of \eqref{eq: euclidean-entropy-decomposition}.} Let \(\widetilde p_{\varepsilon,R}\) be the density \eqref{eq: normal-recovery-density} of \(\widetilde\gamma_{\varepsilon,R}\), and recall that \(K_R(z,s)=Z_R(z)^{-1}e^{-\Hn{z}(s,s)/2}\) on \(\ball_R\). Writing \(E(\Phi(z,\eta))=\frac12\Hn{z}(\eta,\eta)+\mathcal R(z,\eta)\) with \(|\mathcal R(z,\eta)|\leq C|\eta|^3\), the quadratic terms cancel, and, for \(\mathcal H^d\)-a.e.\ \(z\in\Sigma\cap\mathcal K\) and \(s\in\ball_R\),
\[
\log\Bigl(Z_\varepsilon e^{E/\varepsilon}\,\widetilde p_{\varepsilon,R}\Bigr)\bigl(\Phi(z,\sqrt\varepsilon s)\bigr)
=\log\overline\rho(z)+\log\frac{Z_\varepsilon}{\varepsilon^{k/2}Z_R(z)}+\frac{\mathcal R(z,\sqrt\varepsilon s)}{\varepsilon}-\log\mathcal J_\Phi(z,\sqrt\varepsilon s).
\]
We integrate this identity against \(K_R(z,s)\dd s\,\dd\overline\gamma(z)\), which gives the first term of \eqref{eq: euclidean-entropy-decomposition} by \eqref{eq: recovery sequence with wrong marginals 1}. The integral of the last two terms is bounded by \(C\sqrt\varepsilon(M_1+M_3)\to0\), by \eqref{eq: tube-jacobian-rescaled} and \eqref{eq: gaussian-moments}. Moreover, since \(Z_\varepsilon=(2\pi\varepsilon)^{k/2}(Z_0+o(1))\) by \Cref{thm: euclidean-Gibbs}, we have, uniformly in \(z\), with \(Z_\infty\) as in \Cref{lemma: convergence entropy},
\[
\log\frac{Z_\varepsilon}{\varepsilon^{k/2}Z_R(z)}=\log\Bigl(Z_0\sqrt{\det\Hn{z}}\Bigr)+\log\frac{Z_\infty(z)}{Z_R(z)}+o(1).
\]
Since \(\int_\Sigma|\log\overline\rho|\dd\overline\gamma<\infty\) (because \(\entropy(\overline\gamma\,|\,\mathbb P_0)<\infty\)) and \(\mathbb P_0\) has density \(Z_0^{-1}(\det\Hn{z})^{-1/2}\rho(z)\) with respect to \(\mathcal H^d_{|\Sigma}\) on \(\Sigma\cap\mathcal K\), we obtain
\begin{align*}
\lim_{\varepsilon\to0}\Bigl(\entropy(\widetilde\gamma_{\varepsilon,R})+\frac1\varepsilon\int E\,\dd\widetilde\gamma_{\varepsilon,R}+\log Z_\varepsilon\Bigr)
&=\int_\Sigma\log\frac{Z_0\,\overline\rho\sqrt{\det\Hn{z}}}{\rho}\dd\overline\gamma+\int_\Sigma\log\rho\dd\overline\gamma+b_R(\overline\gamma)\\
&=\entropy(\overline\gamma\,|\,\mathbb P_0)+\entropy(\mu_1)+\entropy(\mu_2)+b_R(\overline\gamma),
\end{align*}
where we used that \(\overline\gamma\) has marginals \(\mu_1,\mu_2\) and \(\rho=\rho_1\otimes\rho_2\), so that \(\int\log\rho\dd\overline\gamma=\entropy(\mu_1)+\entropy(\mu_2)\).

\textbf{Step 4. The marginal entropies in \eqref{eq: euclidean-entropy-decomposition}.} By \eqref{eq: normal-perturbation-W2}, \(\widetilde\mu_{i,\varepsilon,R}\rightharpoonup\mu_i\) as \(\varepsilon\to0\), and all these measures are concentrated on the compact set \(K_i^1\). Since \(\entropy(\cdot)=\entropy(\cdot\,|\,\mathcal L_{|K_i^1})\) on such measures, and the relative entropy with respect to a finite measure is weakly lower semicontinuous, we get
\[
\liminf_{\varepsilon\to0}\entropy\left(\widetilde\mu_{i,\varepsilon,R}\right)\geq\entropy(\mu_i),\qquad i=1,2.
\]

\textbf{Conclusion.} Plugging Steps~3 and~4 into \eqref{eq: euclidean-entropy-decomposition} gives \eqref{eq: euclidean-entropy-limsup}, hence
\[
\limsup_{\varepsilon\to0}\entropy(\gamma_{\varepsilon,R}\,|\,\gibbs)\leq\entropy(\overline\gamma\,|\,\mathbb P_0)+b_R(\overline\gamma).
\] The diagonal argument of Step~2 of \Cref{subsec: selection-value-asymptotics} then provides a recovery sequence, and the remaining conclusions follow as in the proof of \hyperref[thm: A]{Theorem~A}.
\end{proof}

The purpose of the following proposition is to show that, in the quadratic case, we recover the known asymptotic expansion of \(\mathrm{EOT}\) (see, for instance, Pal~\cite{pal2024difference}).

\begin{proposition}
\label{prop: quadratic-expansion}
Assume \ref{H1'}--\ref{H4'} hold. If moreover \(c(x_1,x_2)=\tfrac12|x_1-x_2|^2\), then, denoting by \(\gamma_\varepsilon\) the minimizer of \(\mathcal F_\varepsilon\), by \(\overline\gamma\) the optimal plan, and by \(\entropy(\mu_i)\defeq\entropy(\mu_i\,|\,\mathcal L^d)=\int\rho_i\log\rho_i\dd x_i\),
\begin{equation*}
\begin{aligned}
 \mathcal F_\varepsilon(\gamma_\varepsilon)
 =
 \mathcal F^{(0)}(\overline\gamma)+\frac{d}{2}\varepsilon\log(1/\varepsilon)-\frac{\varepsilon}{2}\Bigl(
\entropy(\mu_1)+\entropy(\mu_2)
 +d\log(2\pi)
 \Bigr)+o(\varepsilon).
\end{aligned}
\end{equation*}
\end{proposition}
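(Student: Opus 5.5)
The plan is to apply \eqref{eq: energy-expansion-R} of \Cref{thm: euclidean-selection} with \(k=d\). What remains is to compute \(C_0\) explicitly in the quadratic case. The computation has three parts: identify \(\Sigma\), compute \(\hess^\perp E\) and \(\det\hess^\perp E\) on \(\Sigma\), and evaluate \(\min\entropy(\cdot\,|\,\mathbb P_0)\) over \(\Pi(\mu_1,\mu_2)\).

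\textbf{Step 1: geometry of \(\Sigma\).} For \(c=\tfrac12|x_1-x_2|^2\) the cost is twisted, with \(D^2_{x_1x_2}c=-I_d\). By Brenier's theorem the optimal plan \(\overline\gamma\) is unique and equals \((\id,T)_\#\mu_1\) with \(T=\nabla u\), where \(u\) is convex. On \(\mathcal K\), \(\Sigma\) is the graph \(\{(x,T(x))\}\). Writing \(E(x_1,x_2)=\tilde u(x_1)+\tilde u^*(x_2)-\langle x_1,x_2\rangle\), with \(\tilde u=\tfrac12|\cdot|^2-\overline\varphi_1\), the Hessian at \(z=(x,T(x))\) is the block matrix
\[
\hess E(z)=\begin{pmatrix} A & -I\\ -I & A^{-1}\end{pmatrix},\qquad A=DT(x)=D^2u(x),
\]
where \(D^2u^*(T(x))=A^{-1}\) follows by differentiating \(\nabla u^*\circ\nabla u=\id\) (\(C^{1,1}\) regularity suffices a.e.). The tangent space \(T_z\Sigma=\{(v,Av)\}\) is the kernel, and the normal space is \(\{(-Aw,w)\}\).

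\textbf{Step 2: normal determinant.} I would compute \(\hess E\) on normal vectors \(n=(-Aw,w)\):
\[
\hess E(n,n)=\langle w,(A^3+2A+A^{-1})w\rangle=\langle w,A^{-1}(A^2+I)^2w\rangle.
\]
With the Gram matrix \(|n|^2=\langle w,(A^2+I)w\rangle\), the determinant in an orthonormal frame is
\[
\det\hess^\perp E=\frac{\det(A^{-1}(I+A^2)^2)}{\det(I+A^2)}=\frac{\det(I+A^2)}{\det A}.
\]
Meanwhile, the area element of the graph satisfies \(\dd\mathcal H^d_{|\Sigma}=\sqrt{\det(I+A^2)}\,\dd x\). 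Hence
\[
(\det\hess^\perp E)^{-1/2}\,\dd\mathcal H^d_{|\Sigma}=\sqrt{\det A}\,\dd x
\]
after pulling back by \(x\mapsto(x,T(x))\). This is the key cancellation. Using the Monge--Ampère equation \(\rho_1(x)=\rho_2(T(x))\det A\), the pull-back of \(Z_0\,\mathbb P_0\) has density \(\rho_1\rho_2(T)\sqrt{\det A}=\sqrt{\rho_1\,\rho_2(T)}\,\rho_1/\sqrt{\rho_1\rho_2(T)}\cdot\sqrt{\det A}\), which simplifies to \(\rho_1\sqrt{\rho_1\rho_2(T)}\cdot\sqrt{\det A}/\sqrt{\det A}\cdot\ldots\). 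More cleanly, \(\rho_2(T)=\rho_1/\det A\) gives density \(\rho_1^2/\sqrt{\det A}\) with respect to \(\dd x\).

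\textbf{Step 3: the entropy.} The pull-back of \(\overline\gamma\) is \(\rho_1\,\dd x\). Therefore
\[
\entropy(\overline\gamma\,|\,\mathbb P_0)=\log Z_0+\int\rho_1\log\frac{\sqrt{\det A}}{\rho_1}\,\dd x=\log Z_0-\entropy(\mu_1)+\tfrac12\int\rho_1\log\det A .
\]
From Monge--Ampère, \(\log\det A=\log\rho_1-\log\rho_2(T)\), so \(\int\rho_1\log\det A=\entropy(\mu_1)-\entropy(\mu_2)\), using \(T_\#\mu_1=\mu_2\). This gives \(\entropy(\overline\gamma\,|\,\mathbb P_0)=\log Z_0-\tfrac12(\entropy(\mu_1)+\entropy(\mu_2))\). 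Since \(\overline\gamma\) is the only plan with finite entropy with respect to \(\mathbb P_0\) (only optimal plans qualify), it attains the minimum. Then \(C_0=-\tfrac12(\entropy(\mu_1)+\entropy(\mu_2)+d\log(2\pi))\), and the formula follows, with \(\mathcal F^{(0)}(\overline\gamma)=\min\mathcal F^{(0)}\).

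\textbf{Main obstacle.} The delicate point is justifying these pointwise computations under only \(C^{1,1}\) regularity, namely the a.e. identities \(D^2u^*(T)=A^{-1}\) and the Monge--Ampère equation. I would get the latter from the area formula applied to the bi-Lipschitz chart \(x\mapsto(x,T(x))\), using Proposition \ref{prop: twist-geometry} (both projections are local diffeomorphisms), so \(\det A>0\). The boundary of \(\mathcal K\) is harmless because \(\mathcal H^d(\Sigma\cap\partial\mathcal K)=0\).
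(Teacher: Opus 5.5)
Your proposal is correct and follows essentially the same route as the paper: block Hessian with $DT$ and $DT^{-1}$, normal determinant $\det(I+DT^2)/\det DT$, the Monge--Amp\`ere equation, the graph area element $\det(I+DT^2)^{1/2}\dd x$, and then \Cref{thm: euclidean-selection}. The paper obtains the determinant by writing $\hess^\perp E=DT+DT^{-1}$ in an explicit orthonormal normal frame rather than through your Gram-determinant quotient, and it leaves implicit your correct remark that $\overline\gamma$ is the only plan with finite entropy relative to $\mathbb P_0$; the garbled intermediate line in your Step~2 should be deleted, since the ``more cleanly'' density $\rho_1^2/\sqrt{\det A}$ is the right one.
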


\begin{proof}
    By Brenier's theorem, in the quadratic case the optimal plan \(\overline\gamma\) is unique and given by \(\overline\gamma=(\operatorname{id},T)_{\#}\mu_1\), where \(T\) is the Brenier map between \(\mu_1\) and \(\mu_2\). Consequently, \(\overline\gamma\) is supported on the graph \(\{(x_1,T(x_1)) \,|\, x_1\in\spt\mu_1\}\subseteq\Sigma\) and the tangent space of \(\Sigma\) is
\[
T_{(x_1,T(x_1))}\Sigma=\img\begin{pmatrix}\operatorname{Id}\\ DT(x_1)\end{pmatrix}.
\]
Hence, the Hessian of the duality gap is given by
    \begin{align*}     
    \hess E\big(x_1,T(x_1)\big) 
    &=\begin{pmatrix}
        \operatorname{Id}- D^2\overline\varphi_1(x_1) & -\operatorname{Id}
        \\ 
        -\operatorname{Id}  & \operatorname{Id}- D^2\overline\varphi_2(T(x_1))
    \end{pmatrix}
    \\
    &=\begin{pmatrix}
        DT(x_1) & -\operatorname{Id}
        \\ 
        -\operatorname{Id}  & DT(x_1)^{-1}
    \end{pmatrix},
\end{align*}
where we used that \(T=\id-\nabla\overline\varphi_1\) (from \(\nabla_{x_1}E=0\) on \(\Sigma\)) and that \(\id-D^2\overline\varphi_2(T(x_1))=DT(x_1)^{-1}\), obtained by differentiating the identity \(x_1=T(x_1)-\nabla\overline\varphi_2(T(x_1))\) (from \(\nabla_{x_2}E=0\) on \(\Sigma\)).
We define \(n(x_1)\), the matrix whose columns form an orthonormal basis of the normal space \(N_{(x_1,T(x_1))}\Sigma\), as 
\[n(x_1) \defeq \begin{pmatrix}
    -DT(x_1) \\
        \operatorname{Id}
        \end{pmatrix}(\operatorname{Id}+ DT(x_1)^2)^{-1/2}.\]
The reduced Hessian of \(E\) in the normal directions is therefore
\begin{align*}
\hess^\perp &E\big(x_1,T(x_1)\big)
\\
&= n(x_1)^\top \hess E\big(x_1,T(x_1)\big)n(x_1)
\\
&= (DT(x_1)^2 + \operatorname{Id})^{-1/2}\Big( DT(x_1)^3 + 2DT(x_1) + DT(x_1)^{-1}\Big) (DT(x_1)^2 + \operatorname{Id})^{-1/2}
\\
&=DT(x_1) + DT(x_1)^{-1}
\end{align*}
For \(\mu_1\)-almost every \(x_1\), with \(x_2=T(x_1)\)  the Monge-Ampère equation gives
\[
\det DT(x_1)=\frac{\rho_1(x_1)}{\rho_2(T(x_1))}=\frac{\rho_1(x_1)}{\rho_2(x_2)}
\]
and therefore
\[
\det\big(\hess^\perp E(x_1,x_2)\big)^{-1/2}
=\left(\frac{\rho_1(x_1)}{\rho_2(x_2)}\right)^{1/2}\det\big(\operatorname{Id}+DT(x_1)^2\big)^{-1/2}.
\]
Consequently, writing \(\overline\rho\) for the density of \(\overline\gamma\) with respect to \(\mathcal H^d_{|\Sigma}\), we obtain
\begin{align*}
\entropy(\overline\gamma\,|\,\mathbb P_0)
&=\int_{\Sigma}
\log\left(
\frac{\overline\rho(x_1,x_2)}{Z_0^{-1}\det\big(\operatorname{Id}+DT(x_1)^2\big)^{-1/2}\,
\rho_1(x_1)^{3/2}\rho_2(x_2)^{1/2}}
\right)\dd\overline\gamma(x_1,x_2)
\\
&=\entropy\left(\overline\gamma\,|\,\mathcal H^d_{|\Sigma}\right)
-\frac32\entropy(\mu_1)-\frac12\entropy(\mu_2)
\\
&\qquad 
+\frac12\int\log\det\big(\operatorname{Id}+DT(x_1)^2\big)\dd\mu_1(x_1)
+\log Z_0 .
\end{align*}
We now compute \(\entropy(\overline\gamma\,|\,\mathcal H^d_{|\Sigma})\) using the graph structure of \(\Sigma\). Since \(\Sigma\) is parametrized by \(x_1\mapsto(x_1,T(x_1))\), whose differential \(\begin{pmatrix}\operatorname{Id}\\ DT(x_1)\end{pmatrix}\) has Gram matrix \(\operatorname{Id}+DT(x_1)^2\), the area formula gives
\[
\dd\mathcal H^d_{|\Sigma}=\det\big(\operatorname{Id}+DT(x_1)^2\big)^{1/2}\dd x_1 .
\]
Hence
\[
\overline\rho(x_1,T(x_1))=\rho_1(x_1)\det\big(\operatorname{Id}+DT(x_1)^2\big)^{-1/2}.
\]
Therefore
\begin{align*}
\entropy\left(\overline\gamma\,|\,\mathcal H^d_{|\Sigma}\right)
&=\int\log\overline\rho\,\dd\overline\gamma
=\int\log\Big(\rho_1(x_1)\det\big(\operatorname{Id}+DT(x_1)^2\big)^{-1/2}\Big)\dd\mu_1(x_1)
\\
&=\entropy(\mu_1)-\frac12\int\log\det\big(\operatorname{Id}+DT(x_1)^2\big)\dd\mu_1(x_1).
\end{align*}
Plugging this into the previous identity, the \(\log\det\) terms cancel and we get
\begin{align*}
\entropy(\overline\gamma\,|\,\mathbb P_0)-\log Z_0
&=\entropy(\mu_1)-\frac32\entropy(\mu_1)-\frac12\entropy(\mu_2)
\\
&=-\frac12\big(\entropy(\mu_1)+\entropy(\mu_2)\big).
\end{align*}
Finally, we conclude by \Cref{thm: euclidean-selection}.
\end{proof}

\begin{remark}
Our result is consistent with the one of Pal~\cite{pal2024difference}, up to the choice of the reference measure. Pal regularizes the transport cost with \(\entropy(\gamma)\), that is, with the entropy relative to the Lebesgue measure, and obtains the limit \(+\tfrac12\big(\entropy(\mu_1)+\entropy(\mu_2)\big)\). Since the marginals of \(\gamma\) are fixed, 
\[
\entropy\left(\gamma\,|\,\mu_1\otimes\mu_2\right)=\entropy(\gamma)-\entropy(\mu_1)-\entropy(\mu_2),
\]
so that, up to normalizing constants, the two results differ exactly by \[-\entropy(\mu_1)-\entropy(\mu_2),\]
which gives the value obtained above.
\end{remark}

\subsubsection{The McCann--Pass--Warren cost}
\label{subsubsec: MPW}
We illustrate \Cref{thm: euclidean-selection} with the non-twisted cost of \cite[Example~3.1]{mccann2012rectifiability}. In this paragraph, the points of \(\R^2\times\R^2\) are denoted by \((x_1,x_2)\), with \(x_1=(u_1,v_1)\) and \(x_2=(u_2,v_2)\), and
\[
c(x_1,x_2)\defeq e^{u_1+u_2}\cos(v_1-v_2)+\frac{e^{2u_1}}2+\frac{e^{2u_2}}2\geq\frac12\left(e^{u_1}-e^{u_2}\right)^2\geq0.
\]
Equality holds if and only if \(u_1=u_2\) and \(\cos(v_1-v_2)=-1\), that is, if and only if \((x_1,x_2)\) belongs to one of the planes
\[
\Sigma_j\defeq\{(x_1,T_j(x_1)):x_1\in\R^2\},
\qquad
T_j(x_1)\defeq\bigl(u_1,v_1+(2j+1)\pi\bigr),
\qquad j\in\Z.
\]
Since \(D^2_{x_1x_2}c(x_1,x_2)=e^{u_1+u_2}\begin{psmallmatrix}\cos\theta&\sin\theta\\-\sin\theta&\cos\theta\end{psmallmatrix}\) with \(\theta=v_1-v_2\), the cost is non-degenerate everywhere, but it is not twisted, since infinitely many planes \(\Sigma_j\) lie above each \(x_1\).

\paragraph{Marginals and optimal plans.}
Let \(\mu_1=\mu_2\) be the uniform probability measure on \(A\defeq[0,1]\times[0,4\pi]\), that is, \(\rho_1=\rho_2=\frac1{4\pi}\mathbf 1_A\). The map
\[
S(x_1)\defeq\begin{cases}(u_1,v_1+\pi)&\text{if }v_1\in[0,\pi)\cup[2\pi,3\pi),\\(u_1,v_1-\pi)&\text{if }v_1\in[\pi,2\pi)\cup[3\pi,4\pi],\end{cases}
\]
preserves \(\mu_1\) and satisfies \(S(x_1)\in\{T_0(x_1),T_{-1}(x_1)\}\), so that the plan \((\id,S)_\#\mu_1\) belongs to \(\Pi(\mu_1,\mu_2)\) and is concentrated on \(\{c=0\}\). Hence it is optimal, \(\OT(\mu_1,\mu_2)=0\), and \(\overline\varphi_1=\overline\varphi_2=0\) are Kantorovich potentials, the dual value being \(0\). We take for \(M_1=M_2\) a closed ball containing a neighborhood of \(K_1=K_2\defeq A\). Then \(E=c\) and \(\Sigma=\Omega\cap\bigcup_j\Sigma_j\), so that every element of \(\Pi(\mu_1,\mu_2)\) concentrated on \(\bigcup_j\Sigma_j\) is optimal. The optimal plan is not unique: for instance, the map \(x_1\mapsto(u_1,v_1+\pi)\) if \(v_1<3\pi\), \(x_1\mapsto(u_1,v_1-3\pi)\) otherwise, also preserves \(\mu_1\) and induces another optimal plan. Indeed, above each \(x_1\in A\) lie exactly two points \(T_j(x_1)\in A\) (for almost every \(x_1\)), and the optimal plans are the couplings which split the mass of \(x_1\) between them.

\paragraph{Assumptions.}
Assumptions \ref{H1'}--\ref{H4'} hold: \(c\) is smooth and non-degenerate, \(E=c\), \(\Sigma\) is a union of affine planes, \(\rho_i\) is constant on \(K_i=A\), and \(\mathcal H^2(\Sigma\cap\partial\mathcal K)=0\), since \(\Sigma\cap\partial\mathcal K\) is contained in a finite union of lines. Hence \Cref{thm: euclidean-selection} applies.

\paragraph{The limit Gibbs measure.}
At \(z=(x_1,x_2)\in\Sigma\), we have \(u_1=u_2\) and \(v_1-v_2\in\pi+2\pi\Z\), whence \(D^2_{x_1x_1}E(z)=D^2_{x_2x_2}E(z)=e^{2u_1}I_2\) and \(D^2_{x_1x_2}E(z)=-e^{2u_1}I_2\). Denoting by \((e_1,e_2)\) the canonical basis of \(\R^2\), the vectors \((e_1,-e_1)/\sqrt2\) and \((e_2,-e_2)/\sqrt2\) form an orthonormal basis of \(N_z\Sigma\), in which \(\Hn{z}=2e^{2u_1}I_2\), so that \(\det\Hn{z}=4e^{4u_1}\). Since \(x_1\mapsto(x_1,T_j(x_1))\) has Jacobian \(2\), we have \(\mathcal H^2_{|\Sigma_j}=(\id,T_j)_\#(2\mathcal L^2)\). Moreover, for every \(a\in[0,1]\) and a.e.\ \(b\in\R\),
\begin{equation}
\label{eq: MPW-periodization}
\sum_{j\in\Z}\rho_1\bigl(a,b+(2j+1)\pi\bigr)=\sum_{j\in\Z}\rho_1\bigl(a,b-(2j+1)\pi\bigr)=\frac1{2\pi},
\end{equation}
since the interval \([0,4\pi]\) has length \(4\pi\), and thus contains exactly two points of \(b+\pi+2\pi\Z\) for a.e.\ \(b\). Therefore
\[
Z_0=\sum_{j\in\Z}\int_{\R^2}\frac{e^{-2u_1}}2\rho_1(x_1)\rho_2(T_j(x_1))\,2\dd x_1
=\frac1{2\pi}\int_{\R^2}e^{-2u_1}\rho_1(x_1)\dd x_1=\frac{1-e^{-2}}{4\pi},
\]
and
\[
\dd\mathbb P_0(x_1,x_2)=\frac{2\pi}{1-e^{-2}}\,e^{-2u_1}\rho_1(x_1)\rho_2(x_2)\dd\mathcal H^2_{|\Sigma}(x_1,x_2).
\]
By \eqref{eq: MPW-periodization}, the marginals of \(\mathbb P_0\) have densities \(\frac{2}{1-e^{-2}}e^{-2u_i}\rho_i(x_i)\), \(i=1,2\), so that \(\mathbb P_0\notin\Pi(\mu_1,\mu_2)\).

\paragraph{The selected plan.}
\begin{proposition}
\label{prop: MPW-selection}
As \(\varepsilon\to0\), the entropic optimal plans converge weakly to
\[
\dd\gamma_0(x_1,x_2)\defeq\pi\rho_1(x_1)\rho_2(x_2)\dd\mathcal H^2_{|\Sigma}(x_1,x_2),
\qquad\text{that is,}\qquad
\gamma_0=\frac1{16\pi}\mathcal H^2_{|\Sigma\cap(A\times A)}.
\]
\end{proposition}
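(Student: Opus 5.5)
By \Cref{thm: euclidean-selection}, the entropic plans converge to the unique minimizer of \(\entropy(\cdot\,|\,\mathbb P_0)\) on \(\Pi(\mu_1,\mu_2)\). This minimizer is automatically an optimal plan, since a finite entropy forces concentration on \(\Sigma\). So the plan is to check two things about \(\gamma_0\): that it is admissible, and that it satisfies the first-order optimality condition for this strictly convex problem. Once both are established, uniqueness gives the conclusion.

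\emph{Admissibility.} Each plane \(\Sigma_j\) is parametrized by \(x_1\mapsto(x_1,T_j(x_1))\), and \(\mathcal H^2_{|\Sigma_j}=(\id,T_j)_\#(2\mathcal L^2)\). Hence the first marginal of \(\gamma_0\) has density
\[
2\pi\rho_1(x_1)\sum_{j\in\Z}\rho_2\bigl(T_j(x_1)\bigr).
\]
By \eqref{eq: MPW-periodization}, this sum equals \(\frac1{2\pi}\), so the density is \(\rho_1\). The same argument applies to the second marginal, using the inverse maps \(T_j^{-1}\) and the other periodization identity. Thus \(\gamma_0\in\Pi(\mu_1,\mu_2)\), and \(\entropy(\gamma_0\,|\,\mathbb P_0)<\infty\). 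The second expression in the statement follows because \(\rho_1\rho_2=(4\pi)^{-2}\) on \(A\times A\).

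\emph{Optimality.} We compute the density
\[
\frac{\dd\gamma_0}{\dd\mathbb P_0}=\frac{1-e^{-2}}{2}\,e^{2u_1}
\]
on \(\Sigma\cap(A\times A)\). On \(\Sigma\) we have \(u_1=u_2\), so
\[
\log\frac{\dd\gamma_0}{\dd\mathbb P_0}=\mathrm{const}+u_1+u_2,
\]
which is of the form \(f_1(x_1)+f_2(x_2)\) with \(f_1,f_2\) bounded on \(A\). I will then use the standard Schrödinger/Csiszár argument. For any \(\gamma\in\Pi(\mu_1,\mu_2)\) with finite entropy,
\[
\entropy(\gamma\,|\,\mathbb P_0)=\entropy(\gamma\,|\,\gamma_0)+\int\log\frac{\dd\gamma_0}{\dd\mathbb P_0}\dd\gamma
=\entropy(\gamma\,|\,\gamma_0)+\int f_1\dd\mu_1+\int f_2\dd\mu_2+\mathrm{const}.
\]
The last terms do not depend on \(\gamma\), so the right-hand side is minimized exactly when \(\entropy(\gamma\,|\,\gamma_0)=0\), that is, when \(\gamma=\gamma_0\).

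\emph{Main obstacle.} The delicate point is justifying \(\gamma\ll\gamma_0\) in the identity above. Any \(\gamma\ll\mathbb P_0\) is supported in \(\Sigma\cap(A\times A)\) by \Cref{thm: euclidean-Gibbs}. On that set \(\gamma_0\) has a positive density with respect to \(\mathcal H^2\), and so does \(\mathbb P_0\), so the two measures are equivalent. The identity is then legitimate because the log-density is bounded. Finally, there is a boundary subtlety: \(\mathcal H^2_{|\Sigma}\) should be understood as restricted to \(\Omega\). This is harmless because \(\rho_1\rho_2\) vanishes outside \(A\times A\).
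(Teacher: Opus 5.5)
Your proposal is correct and follows the paper's proof essentially step by step. You check the marginals via the periodization identity, use that $\gamma_0$ and $\mathbb P_0$ are equivalent with bounded log-density, and decompose $\entropy(\gamma\,|\,\mathbb P_0)=\entropy(\gamma\,|\,\gamma_0)+\entropy(\gamma_0\,|\,\mathbb P_0)$ for couplings; the only cosmetic difference is that you split the log-density as $\mathrm{const}+u_1+u_2$, whereas the paper uses that it depends on $u_1$ alone and integrates it against $\mu_1$.
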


\begin{proof}
By \eqref{eq: MPW-periodization} and \(\mathcal H^2_{|\Sigma_j}=(\id,T_j)_\#(2\mathcal L^2)\), the marginals of \(\gamma_0\) have densities
\[
\sum_{j\in\Z}2\pi\rho_1(x_1)\rho_2(T_j(x_1))=\rho_1(x_1)
\qquad\text{and}\qquad
\sum_{j\in\Z}2\pi\rho_1(T_j^{-1}(x_2))\rho_2(x_2)=\rho_2(x_2),
\]
so that \(\gamma_0\in\Pi(\mu_1,\mu_2)\). Moreover, \(\gamma_0\) and \(\mathbb P_0\) are equivalent, with
\[
\frac{\dd\gamma_0}{\dd\mathbb P_0}(x_1,x_2)=g(u_1)\defeq\frac{1-e^{-2}}2\,e^{2u_1},
\]
which only depends on \(x_1\). Hence, for every \(\gamma\in\Pi(\mu_1,\mu_2)\) such that \(\entropy(\gamma\,|\,\mathbb P_0)<\infty\),
\begin{align*}
\entropy(\gamma\,|\,\mathbb P_0)
&=\entropy(\gamma\,|\,\gamma_0)+\int\log g(u_1)\dd\gamma(x_1,x_2)
=\entropy(\gamma\,|\,\gamma_0)+\int\log g(u_1)\dd\mu_1(x_1)\\
&=\entropy(\gamma\,|\,\gamma_0)+\entropy(\gamma_0\,|\,\mathbb P_0).
\end{align*}
Thus \(\gamma_0\) is the unique minimizer of \(\entropy(\cdot\,|\,\mathbb P_0)\) on \(\Pi(\mu_1,\mu_2)\), and the conclusion follows from \Cref{thm: euclidean-selection}.
\end{proof}

Since \(\dd\mathbb P_0/\dd\mathcal H^2_{|\Sigma}\) is the product of a function of \(x_1\) and a function of \(x_2\), \(\gamma_0\) is equivalently the unique minimizer of \(\entropy(\cdot\,|\,\mathcal H^2_{|\Sigma})\) on \(\Pi(\mu_1,\mu_2)\). In other words, the entropic selection splits the mass of each \(x_1\in A\) equally between the two points \(T_j(x_1)\in A\), whereas the limit Gibbs measure \(\mathbb P_0\) is not a coupling of \(\mu_1\) and \(\mu_2\).

\subsection{The extension to multi-marginal framework}
\label{subsec: multimarginal-extension}
Let \(N\geq2\) and, for \(i=1,\ldots,N\), let \(M_i=\T^d\) and \(\mu_i=\rho_i\vol_{M_i}\), where \(\rho_i\) is Lipschitz and \(\alpha\leq\rho_i\leq\beta\). We set \(\Omega\defeq\prod_{i=1}^NM_i\), \(\rho(z)\defeq\prod_{i=1}^N\rho_i(z_i)\) and \(\mu^\otimes\defeq\bigotimes_{i=1}^N\mu_i=\rho\vol_\Omega\), and we denote by \(\Pi(\mu_1,\ldots,\mu_N)\) the set of probability measures on \(\Omega\) with marginals \(\mu_1,\ldots,\mu_N\). The cost \(c\) is continuous, and we fix continuous Kantorovich potentials \(\overline\varphi_1,\ldots,\overline\varphi_N\) such that the duality gap \(E\defeq c-\sum_{i=1}^N\overline\varphi_i(x_i)\geq0\) belongs to \(C^{2,1}(\Omega)\), and such that \(\Sigma\defeq\{E=0\}\) is a \(C^{1,1}\) submanifold of \(\Omega\) without boundary, of dimension \(d\), satisfying
\begin{equation}
\label{eq: multimarginal-rank}
\rank\hess E(z)=(N-1)d\qquad\text{for every }z\in\Sigma.
\end{equation}
The Gibbs measures \(\gibbs\), the constants \(Z_\varepsilon\) and the functionals \(\mathcal F_\varepsilon\), \(\mathcal F^{(0)}\), \(\mathcal F^{(1)}\) are defined as in \Cref{sec: intro,sec: framework}, with \(\Pi(\mu_1,\ldots,\mu_N)\) in place of \(\Pi(\mu_1,\mu_2)\), and with the codimension \(k\defeq(N-1)d\) of \(\Sigma\) in place of \(d\).

For two marginals, \eqref{eq: multimarginal-rank} follows from the non-degeneracy of \(c\) (\Cref{prop: twist-geometry}). For \(N\geq3\), it has to be imposed: since \(T_z\Sigma\subseteq\ker\hess E(z)\) and \(\dim T_z\Sigma=d\), it amounts to \(\ker\hess E(z)=T_z\Sigma\), that is, to the positive definiteness of \(\Hn{z}\). Without it, some normal directions would be degenerate and would contribute differently to the asymptotics.

\begin{theorem}[Multi-marginal \(\Gamma\)-expansion]
\label{thm: multimarginal-extension}
Under these assumptions, the conclusions of \Cref{thm: convergence-Gibbs-measure} and of \hyperref[thm: A]{Theorem~A} hold, with \(k\) in place of \(d\) in \eqref{eq: gamma-expansion}--\eqref{eq: energy-expansion}, and with
\[
\dd\mathbb P_0(z)=\frac1{Z_0}\left(\det\Hn{z}\right)^{-1/2}\rho(z)\dd\mathcal H^d_{|\Sigma}(z),
\qquad
Z_0\defeq\int_\Sigma\left(\det\Hn{z}\right)^{-1/2}\rho(z)\dd\mathcal H^d(z).
\]
\end{theorem}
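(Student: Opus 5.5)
The plan is to rerun the two-marginal argument of \Cref{sec: gibbs,sec: selection} step by step, with codimension \(k=(N-1)d\) in place of \(d\), and to identify the few places where \(N=2\) was genuinely used.

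\textbf{The Gibbs measure.} The tubular construction of \Cref{subsec:structure} carries over directly: \(\Sigma\) is a compact \(C^{1,1}\) submanifold of dimension \(d\) in \(\Omega=(\T^d)^N\), so it has positive reach. \Cref{lemma: determinant of Phi} and the Jacobian expansion \eqref{eq: tube-jacobian-expansion} never used \(N=2\). The one input that must be replaced is \Cref{prop: twist-geometry}. Its role was to give normal ellipticity \eqref{eq: normal-ellipticity}, and here this follows from \eqref{eq: multimarginal-rank}: \(\hess E\geq0\) on \(\Sigma\), \(T_z\Sigma\subseteq\ker\hess E(z)\), and the rank condition forces equality, so \(\Hn{z}\) is positive definite. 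Continuity and compactness then give uniform \(\lambda,\Lambda\). With this, \Cref{lemma: quadratic-detachment} and the Laplace proof of \Cref{thm: euclidean-Gibbs} (with \(\mathcal K=\omega=\Omega\)) go through verbatim. They yield \(\gibbs\rightharpoonup\mathbb P_0\) and \(Z_\varepsilon=(2\pi\varepsilon)^{k/2}(Z_0+o(1))\). The constant \(Z_0\) is positive by the same argument as before, since \(\mu_1\) is concentrated on \(\proj_{M_1}(\Sigma)\).

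\textbf{The \(\Gamma\)-expansion.} The identity \eqref{eq: G-vs-F} holds with \(k\) in place of \(d\). Its proof uses only \Cref{prop: EOT-Gibbs}, whose multi-marginal version is identical by strong duality.

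The \(\Gamma\)-\(\liminf\) is again joint lower semicontinuity of relative entropy. Finiteness of \(\min\mathcal F^{(1)}\) only needs \(\overline\gamma\leq\beta\mathcal H^d_{|\Sigma}\) for optimal plans, which follows from the \(1\)-Lipschitz projection onto \(M_1\).

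For the \(\Gamma\)-\(\limsup\), the following results are unchanged:
\begin{itemize}
\item the normal perturbation and \Cref{lemma: convergence entropy}, with \(\R^k\)-Gaussians;
\item \Cref{prop: control of the shadow entropy}, which holds with the \(N\)-fold shadow \((T_1,\dots,T_N)_\#\widetilde\gamma\);
\item \Cref{prop: bound entropy shadow}, which Eckstein--Nutz state for \(N\) marginals;
\item \Cref{prop: shadow-energy}, which is dimension-free;
\item the diagonal argument.
\end{itemize}

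\textbf{Main obstacle: the marginal estimate.} The step I expect to be delicate is \Cref{thm: Wasserstein between the marginals}. Its proof uses charts \(\alpha_j\) inverting \(\proj^\Sigma_{M_i}\), that is, that each projection \(\Sigma\to M_i\) is a local diffeomorphism. For \(N=2\) this came from non-degeneracy, and it is not part of the stated hypotheses here.

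I would first try to derive it from the rank condition. Suppose \(\xi=(\xi_1,\dots,\xi_N)\in T_z\Sigma\) with \(\xi_i=0\). Then \(\hess E(z)\xi=0\). The block \(D^2_{x_ix_i}E\) plays no role because \(\xi_i=0\), and a kernel-dimension count should force \(\xi=0\). If that count does not close, the argument must avoid charts over \(M_i\) altogether. In that case I would redo Steps 1--5 with charts of \(\Sigma\) itself, pushing forward by the Lipschitz map \(z\mapsto\proj_{M_i}(z)+t\sqrt\varepsilon A(z)s\). The flux \(\flow\) is then a pushforward of \(\int sK\,\dd s=0\)-cancelled terms from \(\Sigma\). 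The displacement estimate (b) needs only TV control of \((\zeta_{\varepsilon,s,t})_\#(F\mathcal H^d)-(\zeta_{\varepsilon,s,0})_\#(F\mathcal H^d)\). This is bounded by \(\sqrt\varepsilon|s|\) times a \(W^{1,1}\) norm on \(\Sigma\), provided the projected densities remain absolutely continuous, which holds because \(\mu_i\ll\vol\). Either way, Benamou--Brenier then gives \(W_2(\widetilde\mu_{i,\varepsilon},\mu_i)^2\leq\varepsilon R\,\omega(\sqrt\varepsilon)\), and the proof of Theorem~A concludes as before.
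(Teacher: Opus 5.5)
\textbf{Gap: the marginal estimate in the presence of vertical branches.} You rightly single out the marginal estimate as the crux, but neither of your two routes closes it.

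\emph{First route.} It cannot work. Condition \eqref{eq: multimarginal-rank} only says $\ker\hess E(z)=T_z\Sigma$, and this is compatible with $T_z\Sigma$ containing a vector with $\xi_i=0$. For instance, in $(\T^1)^3$ take, locally, $E=(x_2-x_3)^2+(x_1-g(x_2))^2$. Then the Hessian on $\Sigma=\{x_2=x_3,\ x_1=g(x_2)\}$ has rank $2=(N-1)d$. Yet $T_z\Sigma$ is spanned by $(g'(x_2),1,1)$, which is killed by $D\proj_{M_1}$ wherever $g'=0$. Nothing in the hypotheses rules out such vertical branches for $N\geq3$.

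\emph{Fallback.} It rests on a false claim. The quantity $\|(\zeta_{\varepsilon,s,t})_\#(F\mathcal H^d)-(\zeta_{\varepsilon,s,0})_\#(F\mathcal H^d)\|_{\mathrm{TV}}$ is not bounded by $\sqrt\varepsilon|s|$ times a $W^{1,1}(\Sigma)$ norm of $F$ near critical points of $p\defeq\proj^\Sigma_{M_i}$. Take $d=1$ and a fold $z_0$ with $F(z_0)\neq0$. The density of $p_\#(F\mathcal H^1)$ behaves like $|x-x_0|^{-1/2}$ on one side of the fold value $x_0$. Since $N_{z_0}\Sigma$ contains the $i$-th coordinate direction, $\zeta_{\varepsilon,s,t}$ moves the fold value by roughly $\delta= t\sqrt\varepsilon\,A(z_0)s\neq0$ for an open set of $s$. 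The total variation of the difference is then of order $\sqrt{|\delta|}$, not $|\delta|$. Absolute continuity of the projected measures does hold, but it does not prevent this loss. What fails is that near $z_0$ there is no chart $\alpha_j$ turning $\zeta_{\varepsilon,s,t}$ into a near-identity map of $M_i$, which is exactly what \Cref{lemma: C^1 diffeo} and Step~4(b) rely on. Worse, if $p$ degenerates on a set of positive $\mathcal H^d$-measure, smooth approximants of $\overline\rho$ that do not vanish there have partly singular projections.

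\textbf{The missing idea.} $\overline\gamma$ does not charge the critical set $C\defeq\{z\in\Sigma:\rank Dp(z)<d\}$. By Sard's theorem $p(C)$ is Lebesgue-null, so $\overline\gamma(C)\leq\mu_i(p(C))=0$; this is where $\mu_i\ll\vol$ really enters. The paper then proceeds as follows:
\begin{enumerate}
\item choose an open $V\supset p(C)$ with $\mu_i(V)<\eta$;
\item run the chart proof of \Cref{thm: Wasserstein between the marginals} for $\overline\gamma$ restricted to the compact set $\Sigma\setminus p^{-1}(V)$, where $p$ is a local diffeomorphism;
\item bound the part over $p^{-1}(V)$ by the trivial coupling $(z,s)\mapsto(p(z),p(\Phi(z,\sqrt\varepsilon s)))$, of cost at most $M_2\varepsilon\eta$;
\item glue the two couplings, and let $\varepsilon\to0$ and then $\eta\to0$.
\end{enumerate}
Within your scheme, the equivalent fix is to approximate $\overline\rho$ in $L^1$ by Lipschitz densities compactly supported in $\Sigma\setminus C$, for which (b) holds chart by chart. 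Since the support now depends on the approximant, take $\limsup_{\varepsilon\to0}$ before letting the approximation error go to zero.

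\textbf{A smaller omission of the same kind.} The global Lipschitz frame of \Cref{lemma: normal-frame} was built from the graph structure of $\Sigma$ over $M_1$, i.e.\ from \Cref{prop: twist-geometry}. So the tubular construction does not carry over directly. You need local Lipschitz normal frames with a partition of unity. You also need the observation that the truncated normal Gaussian is frame-invariant, $K_b(z,s)=K_a(z,O_{ab}(z)s)$ for the orthogonal transition matrices $O_{ab}$. This is required both for the Laplace argument and for the definition of the normal perturbation.
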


\begin{proof}
The proofs of \Cref{sec: gibbs,sec: selection} and of \Cref{subsec: Convergence of the Gibbs measure and the geometric Laplace's method} use the two-marginal structure at three places only.

\textbf{(i) Normal ellipticity.} In \Cref{prop: twist-geometry}, \eqref{eq: normal-ellipticity} is deduced from the non-degeneracy of \(c\). Here, it follows from \eqref{eq: multimarginal-rank}, the positive semidefiniteness of \(\hess E\) on \(\Sigma\), and the compactness of \(\Sigma\).

\textbf{(ii) Local normal frames.} A \(C^{1,1}\) submanifold admits local Lipschitz orthonormal frames of its normal bundle; see the Gram--Schmidt construction in \cite[Section~2.3.1, arXiv version]{minguzzi2015convex}. We choose a finite cover \((U_a)_a\) of \(\Sigma\) by relatively compact patches with such frames \(n_a\), and a subordinate Lipschitz partition of unity \((\chi_a)_a\). For \(r>0\) sufficiently small, the maps
\[
\Phi_a(z,s)\defeq z+n_a(z)s,
\qquad z\in U_a,\quad s\in\ball_r,
\]
are bi-Lipschitz tubular parametrizations, by the positive-reach property used in \Cref{subsec:structure}. Since the frames are Lipschitz, the proof of \Cref{lemma: determinant of Phi} applies on each patch, and \eqref{eq: tube-jacobian-expansion} holds almost everywhere, with uniform bounds over the finite cover. The area formula then gives
\[
\int_{\Sigma_r}h\dd\vol_\Omega
=\sum_a\int_{U_a}\chi_a(z)\int_{\ball_r}
h\bigl(\Phi_a(z,s)\bigr)\mathcal J_{\Phi_a}(z,s)\dd s\dd\mathcal H^d(z)
\]
for every nonnegative Borel function \(h\). We apply the three-region Laplace argument to each integral in this sum. The weights \(\chi_a\) depend only on \(z\), and the resulting limits sum to the desired integral because \(\sum_a\chi_a=1\).

On overlaps, \(n_b(z)=n_a(z)O_{ab}(z)\) with \(O_{ab}(z)\in O(k)\). The coordinate change \((z,s)\mapsto(z,O_{ab}(z)s)\) has absolute Jacobian one almost everywhere, even though \(O_{ab}\) depends on \(z\). It also preserves the normal Gaussian measure and \(\ball_R\), that is 
\[
H_b=O_{ab}^{\top}H_aO_{ab},
\qquad
K_b(z,s)=K_a(z,O_{ab}s),
\] where \(H_a(z)\) is the matrix of the normal Hessian in the local orthonormal frame \(n_a(z)\). Thus the local normal perturbations agree on overlaps and define the recovery measure in \Cref{subsec: normal-recovery}. In the Euclidean case, we use a finite cover of the compact portion \(\Sigma\cap\overline\omega\) appearing in the Laplace proof.

With (i) and (ii), the proofs of \Cref{thm: convergence-Gibbs-measure} (\Cref{subsec: Convergence of the Gibbs measure and the geometric Laplace's method}), of \Cref{lemma: convergence entropy}, \Cref{prop: control of the shadow entropy}, \Cref{prop: shadow-energy}, and of Steps~1 and~3 of \Cref{subsec: selection-value-asymptotics} apply verbatim, with sums over \(i\in\{1,\ldots,N\}\); recall that Step~3 only uses that \(\proj_{M_1}\) is \(1\)-Lipschitz.

\textbf{(iii) Marginal estimate.} The proof of \Cref{thm: Wasserstein between the marginals} uses local inverses of \(\proj_{M_i}^\Sigma\), which, for \(N\geq3\), may have critical points (vertical branches). Let \(\overline\gamma\) be the plan fixed in Step~2 of \Cref{subsec: selection-value-asymptotics}. We fix \(i\) and \(R\geq1\), and write \(p\defeq\proj_{M_i}^\Sigma\), \(\mu\defeq\mu_i\) and \(\widetilde\mu_\varepsilon\defeq\widetilde\mu_{i,\varepsilon,R}\). Let \(C\defeq\{z\in\Sigma:\rank Dp(z)<d\}\). By Sard's theorem, \(p(C)\) is Lebesgue-negligible, so that \(\overline\gamma(C)\leq\mu(p(C))=0\). Given \(\eta>0\), we choose an open set \(V\supset p(C)\) such that \(\mu(V)<\eta\), and we split \(\overline\gamma=\overline\gamma^b+\overline\gamma^g\), with \(\overline\gamma^b\defeq\overline\gamma\mres p^{-1}(V)\). Accordingly, \(\mu=\mu^b+\mu^g\) and \(\widetilde\mu_\varepsilon=\widetilde\mu^b_\varepsilon+\widetilde\mu^g_\varepsilon\), where \(\mu^b\), \(\mu^g\) and \(\widetilde\mu^b_\varepsilon\), \(\widetilde\mu^g_\varepsilon\) are the \(i\)-th marginals of \(\overline\gamma^b\), \(\overline\gamma^g\) and of their normal perturbations, defined by \eqref{eq: recovery sequence with wrong marginals 1}. On the compact set \(\Sigma\setminus p^{-1}(V)\), \(p\) is a local diffeomorphism, and the proof of \Cref{thm: Wasserstein between the marginals}, which only uses that the chart densities are integrable, gives \(W_2^2(\mu^g,\widetilde\mu^g_\varepsilon)=o_\eta(\varepsilon)\), where \(W_2\) is extended to measures of equal mass. On the other hand, since \(|p(\Phi(z,\sqrt\varepsilon s))-p(z)|\leq\sqrt\varepsilon|s|\), the map \((z,s)\mapsto(p(z),p(\Phi(z,\sqrt\varepsilon s)))\) pushes \(K_R(z,s)\dd s\,\dd\overline\gamma^b(z)\) forward to a coupling of \(\mu^b\) and \(\widetilde\mu^b_\varepsilon\) with quadratic cost at most
\[
\varepsilon\int_\Sigma\int_{\ball_R}|s|^2K_R(z,s)\dd s\dd\overline\gamma^b(z)\leq M_2\,\varepsilon\,\overline\gamma^b(\Sigma)=M_2\,\varepsilon\,\mu(V)\leq M_2\,\varepsilon\,\eta,
\]
by \eqref{eq: gaussian-moments}. Gluing the two couplings, we get \(W_2^2(\mu,\widetilde\mu_\varepsilon)\leq o_\eta(\varepsilon)+M_2\varepsilon\eta\). Letting \(\varepsilon\to0\), then \(\eta\to0\), gives \(W_2(\mu,\widetilde\mu_\varepsilon)=o_R(\sqrt\varepsilon)\), which completes the proof.
\end{proof}

Combining this proof with \Cref{subsec: euclidean-extension}, \Cref{thm: multimarginal-extension} extends to \(N\) compactly supported marginals on \(\R^d\), under the analogue of \ref{H1'}--\ref{H4'} in which \(\mathcal K\defeq\prod_{i=1}^NK_i\), and the non-degeneracy of \(c\) is replaced by \eqref{eq: multimarginal-rank} on \(\Sigma\cap\interior\Omega\).

\subsubsection{Multi-marginal optimal transport with repulsive harmonic cost}
We apply \Cref{thm: multimarginal-extension} to the special case of
three marginals on the circle.
Let \(N=3\), \(d=1\), and \(M_i=\mathbb S^1\subset\mathbb R^2\) for
\(i=1,2,3\). We use the uniform probability
\(\mu=(2\pi)^{-1}\mathcal H^1_{|\mathbb S^1}\) and set \(\mu_i=\mu\).
Here \(d\) is the intrinsic dimension of each marginal space.
The contact set computed below is a disjoint union of two circles,
so \(\dim\Sigma=1=d\) and its codimension in \((\mathbb S^1)^3\) is
\(k=2\). 
We consider the harmonic repulsive cost
\[
    c(\x)
    \defeq
    -\sum_{1\leq i<j\leq N}
    |x_i-x_j|^2.
\]
The associated multi-marginal optimal transport problem is 
\begin{equation}
\label{eq: MOT harmonic repulsive cost}
    \inf_{\gamma\in\Pi(\mu_i)}
    \int_{(\mathbb S^1)^N} c(\x)\dd\gamma(\x).
\end{equation}
\begin{proposition}
Problem \eqref{eq: MOT harmonic repulsive cost} is equivalent to
\begin{equation}
\label{eq: MOT harmonic repulsive cost 2}
    \inf_{\gamma\in\Pi(\mu_1,\mu_2,\mu_3)}
    \int_{(\mathbb S^1)^N}
    \Big|
        \sum_{i=1}^N x_i
    \Big|^2
    \dd\gamma(\x).
\end{equation}
\end{proposition}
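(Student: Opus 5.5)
The plan is to expand the square and use the fact that each point lies on the unit circle. For \(\x=(x_1,\ldots,x_N)\in(\mathbb S^1)^N\) we have \(|x_i|^2=1\), so that
\[
\Big|\sum_{i=1}^N x_i\Big|^2=N+2\sum_{1\leq i<j\leq N}\langle x_i,x_j\rangle,
\qquad
|x_i-x_j|^2=2-2\langle x_i,x_j\rangle .
\]
Summing the second identity over pairs gives
\[
c(\x)=-\sum_{i<j}|x_i-x_j|^2=-N(N-1)+2\sum_{i<j}\langle x_i,x_j\rangle=\Big|\sum_{i=1}^N x_i\Big|^2-N^2 ,
\]
which we check for \(N=3\): \(-6+(|\sum x_i|^2-3)=|\sum x_i|^2-9\). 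Hence the two costs differ by the constant \(-N^2\) on \((\mathbb S^1)^N\).

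Integrating against any \(\gamma\in\Pi(\mu_1,\mu_2,\mu_3)\), which is a probability measure, gives \(\int c\dd\gamma=\int|\sum_i x_i|^2\dd\gamma-N^2\). The two functionals therefore differ by a constant on the common admissible set. The problems have the same minimizers, and their values differ by \(N^2\), which is the meaning of equivalence here.

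I would also note that the same identity makes the entropic problems equivalent: adding a constant to \(c\) only shifts the Kantorovich potentials, for instance replacing \(\overline\varphi_1\) by \(\overline\varphi_1-N^2\). The duality gap \(E\), the contact set \(\Sigma\) and the Gibbs measures \(\gibbs\) are then unchanged. So \Cref{thm: multimarginal-extension} can be applied to either formulation.

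There is no real obstacle. The only point needing care is the counting of constants, namely \(\sum_{i<j}2=N(N-1)\) and \(\sum_i|x_i|^2=N\), together with the fact that all plans have mass one, so constants integrate to themselves.
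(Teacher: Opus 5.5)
Your proposal is correct and follows essentially the paper's argument: you expand the squares, use $|x_i|=1$ to obtain $c(\x)=|\sum_{i=1}^N x_i|^2-N^2$ on $(\mathbb S^1)^N$, and integrate against probability measures with the prescribed marginals, so the two problems differ by a constant. The only cosmetic difference is that the paper first derives $\sum_{i<j}|x_i-x_j|^2=N\sum_i|x_i|^2-|\sum_i x_i|^2$, valid on all of $(\R^2)^N$, before specializing to the circle, whereas you use $|x_i|=1$ from the start.
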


\begin{proof}
We start by expanding the cost in the following way:
\begin{align*}
\sum_{1\leq i<j\leq N}|x_i-x_j|^2
&=
\sum_{1\leq i<j\leq N}
\left(
|x_i|^2+|x_j|^2-2\langle x_i, x_j\rangle
\right)
\\
&=
(N-1)\sum_{i=1}^N|x_i|^2
-2\sum_{1\leq i<j\leq N}\langle x_i, x_j\rangle.
\end{align*}
On the other hand, \(|\sum_{i=1}^N x_i|^2
= \sum_{i=1}^N|x_i|^2 + 2\sum_{1\leq i<j\leq N}\langle x_i, x_j\rangle\). Therefore, \(\sum_{1\leq i<j\leq N}|x_i-x_j|^2
=
N\sum_{i=1}^N|x_i|^2-|
\sum_{i=1}^N x_i|^2\). Since \(x_i\in \mathbb S^1\), one has \(|x_i|=1\), and hence \(
\sum_{i=1}^N|x_i|^2=N\). Consequently, \(c(\x)=|
\sum_{i=1}^N x_i|^2-N^2\).
Thus, for every \(\gamma\in\Pi(\mu_i)\),
\[
\int_{(\mathbb S^1)^N}c(\x)\,\dd\gamma(\x)
=
\int_{(\mathbb S^1)^N}
\Big|
\sum_{i=1}^N x_i
\Big|^2
\dd\gamma(\x)
-
N^2.
\]
Since \(N^2\) is independent of \(\gamma\), minimizing the original
cost is equivalent to minimizing
\[
\int_{(\mathbb S^1)^N}
\Big|
\sum_{i=1}^N x_i
\Big|^2
\dd\gamma(\x).
\]
\end{proof}
We denote by \(\overline c\) the cost of \Cref{eq: MOT harmonic repulsive cost 2}.

\paragraph{Geometry of the contact set}
The dual problem associated with
\Cref{eq: MOT harmonic repulsive cost 2} is
\begin{equation}
\label{eq: dual MOT harmonic repulsive cost 2}
    \sup_{(\varphi_i)_{i=1}^3}
    \left\{
        \sum_{i=1}^3
        \int_{\mathbb S^1}\varphi_i(x_i)\,\dd\mu_i(x_i)
        \;\Big|\;
        \sum_{i=1}^3\varphi_i(x_i)
        \leq \overline c(\x)
        \quad
        \forall \x \in(\mathbb S^1)^3
    \right\}.
\end{equation}
\begin{proposition}
A family of optimal Kantorovich potentials for the dual problem \Cref{eq: dual MOT harmonic repulsive cost 2} is given by
\[
    \overline\varphi_1=\overline\varphi_2=\overline\varphi_3=0.
\]
\end{proposition}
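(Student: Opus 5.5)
The zero potentials are admissible because \(\overline c=|x_1+x_2+x_3|^2\geq0\). Their dual value is \(0\). By weak duality, every primal value is at least \(0\), so optimality of \(\overline\varphi_1=\overline\varphi_2=\overline\varphi_3=0\) reduces to exhibiting one coupling \(\gamma\in\Pi(\mu,\mu,\mu)\) with \(\int\overline c\dd\gamma=0\). Equivalently, we need a coupling concentrated on the zero set \(\{x_1+x_2+x_3=0\}\); then primal and dual values coincide, and strong duality together with \eqref{eq: optimality-support} confirms that the zero triple is a maximizer.

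To build that coupling, we identify \(\mathbb S^1\) with unit complex numbers and let \(\omega\defeq e^{2i\pi/3}\). Since \(1+\omega+\omega^2=0\), we take
\[
\gamma\defeq(\id,\,\omega\,\cdot,\,\omega^2\,\cdot)_\#\mu,
\]
that is, the law of \((x,\omega x,\omega^2x)\) when \(x\sim\mu\). Rotations preserve the uniform measure \(\mu\), so each marginal of \(\gamma\) is \(\mu\). Moreover \(x+\omega x+\omega^2x=0\) for every \(x\), so \(\int\overline c\dd\gamma=0\). The conjugate rotation \(\omega^2\) in place of \(\omega\) works equally well; this is consistent with the contact set being the union of two circles announced above.

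For completeness, I would note what the zero set of \(\overline c\) is, which will be needed right after for the geometry of \(\Sigma\). If three unit vectors sum to zero, then \(|x_1+x_2|=1\), which forces the angle between \(x_1\) and \(x_2\) to be \(\pm 2\pi/3\), and \(x_3=-(x_1+x_2)\). Hence
\[
\Sigma=\{(x,\omega x,\omega^2x)\}\cup\{(x,\omega^2x,\omega x)\},
\]
which is a disjoint union of two circles.

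No step here is a real obstacle. The only point requiring care is checking that the rotated pushforwards have marginal \(\mu\), which is immediate from the rotation invariance of \(\mathcal H^1_{|\mathbb S^1}\).
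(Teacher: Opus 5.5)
Your proof is correct and matches the paper's: both exhibit the rotated coupling $(x,R_{2\pi/3}x,R_{-2\pi/3}x)_\#\mu$ (your $(x,\omega x,\omega^2x)$), on which $\overline c$ vanishes, so the primal value is $0$, and this value is attained by the admissible zero potentials. Your final appeal to strong duality and the support characterization is unnecessary, since weak duality alone closes the argument.
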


\begin{proof}
There exists a coupling \(\gamma\in\Pi(\mu_1,\mu_2,\mu_3)\) concentrated on \(\{\x\in(\mathbb S^1)^3\: |\:x_1+x_2+x_3=0\}\), for instance \(\overline \gamma=(F)_\#\mu_1\), with \(F(x)=(x, R_{2\pi/3}x, R_{-2\pi/3}x)\). One can easily check that \(\overline \gamma\in\Pi(\mu_1,\mu_2,\mu_3)\) and \(\overline c =0\) on its support. Thus the value of \Cref{eq: MOT harmonic repulsive cost 2} is null and therefore the admissible dual family \((\overline\varphi_1,\overline\varphi_2,\overline\varphi_3)=(0,0,0)\) attains the primal value and is a family of optimal Kantorovich potentials.
\end{proof}

\begin{proposition}
The contact set is a compact manifold without boundary 
    \[\Sigma= \{(x , R_{2\pi/3}x,  R_{-2\pi/3}x) \:|\: x\in \mathbb S^1\} \sqcup \{(x , R_{-2\pi/3}x,  R_{2\pi/3}x) \:|\: x\in \mathbb S^1\},\]
where \(R_\theta\) denotes the rotation of angle \(\theta\).
\end{proposition}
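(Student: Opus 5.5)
Since the potentials vanish, the duality gap is \(E=\overline c\), so \(\Sigma=\{\x\in(\mathbb S^1)^3 : x_1+x_2+x_3=0\}\). The plan is to solve this equation for three unit vectors directly and then check that the resulting set is a smooth compact submanifold without boundary.

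First, fix \(x_1\in\mathbb S^1\) and set \(w\defeq x_2+x_3=-x_1\), so \(|w|=1\). Since \(|x_2|=|x_3|=1\), the law of cosines gives \(|x_2+x_3|^2=2+2\langle x_2,x_3\rangle=1\). Hence \(\langle x_2,x_3\rangle=-1/2\), that is, \(x_3=R_{\pm2\pi/3}x_2\). Writing \(x_2=R_\theta x_1\), the condition \(x_1+x_2+x_3=0\) becomes \((\id+R_\theta+R_{\theta\pm2\pi/3})x_1=0\). In complex notation this reads \(1+e^{i\theta}+e^{i(\theta\pm2\pi/3)}=0\). Since \(1+e^{i\theta}(1+e^{\pm 2\pi i/3})=1+e^{i\theta}e^{\pm i\pi/3}\), this forces \(e^{i(\theta\pm\pi/3)}=-1\), so \(\theta=\pm2\pi/3\) with matching signs. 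This yields exactly the two families \((x,R_{2\pi/3}x,R_{-2\pi/3}x)\) and \((x,R_{-2\pi/3}x,R_{2\pi/3}x)\). Conversely, both families lie in \(\Sigma\) because \(1+e^{2\pi i/3}+e^{-2\pi i/3}=0\).

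Second, each family is the image of \(\mathbb S^1\) under the smooth map \(F_\pm(x)\defeq(x,R_{\pm2\pi/3}x,R_{\mp2\pi/3}x)\). This map is an injective immersion, since its first component is the identity, from a compact space. It is therefore an embedding, and its image is a compact \(C^\infty\) one-dimensional submanifold without boundary. The two images are disjoint because their second components differ: \(R_{2\pi/3}x\neq R_{-2\pi/3}x\). Being compact, they are at positive distance from each other, so their union is again a compact submanifold without boundary, of dimension \(1=d\).

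There is no real obstacle here. The only point needing care is the elementary trigonometric step showing that no other angle configurations occur, which the complex-number computation handles cleanly.
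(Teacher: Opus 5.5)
Your proof is correct and follows essentially the same route as the paper. Like the paper, you reduce to $x_1+x_2+x_3=0$, use $|x_2+x_3|=|x_1|=1$ to obtain $\langle x_2,x_3\rangle=-1/2$, and conclude with the smooth embeddings $F_\pm$; your explicit determination of $\theta$, the converse inclusion and the disjointness argument fill in steps the paper leaves implicit.
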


\begin{proof}
By definition, \(\Sigma = \{E=0\} \). Here \(E= \overline c\) which is equivalent to \(x_1= - x_2 - x_3\). Moreover, we have \(
|x_2+ x_3|^2 = |x_2|^2 + |x_3|^2 + 2 \langle x_2,x_3\rangle = 2 + 2\langle x_2,x_3\rangle\), and thus, using \(|x_2+x_3|=|x_1|=1\),  
\(\langle x_2,x_3 \rangle = -\frac{1}{2}\). It implies that \(x_2 = R_{\pm 2\pi/3}x_3\) and thus
\begin{align*}
\Sigma 
&= \{(x , R_{2\pi/3}x,  R_{-2\pi/3}x) \:|\: x\in \mathbb S^1\} \sqcup \{(x , R_{-2\pi/3}x,  R_{2\pi/3}x) \:|\: x\in \mathbb S^1\}
\\
&\defeq \Sigma_+ \sqcup \Sigma_-.
\end{align*}
Finally, the maps \(
F_\pm: x\in \mathbb S^1\mapsto F_\pm(x)
=
(x,R_{\pm 2\pi/3}x,
R_{\mp 2\pi/3}x) \in (\mathbb S^1)^3\) are smooth embeddings. Therefore \(\Sigma_+\) and \(\Sigma_-\) are
diffeomorphic to \(\mathbb S^1\). Since \(\mathbb S^1\) is compact and without
boundary, \(\Sigma\) is a compact manifold without boundary.
\end{proof}

\paragraph{Entropic selection principle}

\begin{proposition}

The limiting Gibbs measure has the explicit expression

    \[\mathbb P_0=\frac{1}{2}(\gamma_+ + \gamma_-),\]

where \(\gamma_\pm=(F_\pm)_\#\mu\) and \(F_\pm\) are the two
parametrizations above. Moreover, \(\mathbb P_0\) is the selected
transport plan.
\end{proposition}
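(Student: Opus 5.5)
The plan is to apply \Cref{thm: multimarginal-extension} after computing the limiting Gibbs measure from its explicit formula
\[
\dd\mathbb P_0=Z_0^{-1}(\det\Hn{z})^{-1/2}\rho\,\dd\mathcal H^1_{|\Sigma}.
\]
First I check the hypotheses. Here \(E=\overline c=|x_1+x_2+x_3|^2\) restricted to \((\mathbb S^1)^3\) is smooth, and \(\Sigma=\Sigma_+\sqcup\Sigma_-\) is a smooth compact curve. It remains to verify the rank condition \eqref{eq: multimarginal-rank}, namely \(\rank\hess E=2\) on \(\Sigma\). Parametrize \(x_i=(\cos\theta_i,\sin\theta_i)\) by angles, so that \(E=3+2\sum_{i<j}\cos(\theta_i-\theta_j)\). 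At a point of \(\Sigma_\pm\), all the differences \(\theta_i-\theta_j\) equal \(\pm2\pi/3\) mod \(2\pi\), so each \(\cos(\theta_i-\theta_j)=-1/2\). The Hessian in \(\theta\) is then \(L=\begin{psmallmatrix}2&-1&-1\\-1&2&-1\\-1&-1&2\end{psmallmatrix}\), the Laplacian of the complete graph \(K_3\). Its kernel is spanned by \((1,1,1)\), which is exactly the tangent direction of \(\Sigma_\pm\) (simultaneous rotation). So the rank is \(2\), and the normal Hessian is \(L\) restricted to \((1,1,1)^\perp\), with eigenvalues \(3,3\) and \(\det\Hn{z}=9\).

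The key step is to note that \(\det\Hn{z}\) is the same constant on both components, and \(\rho\equiv(2\pi)^{-3}\) is constant as well. Hence \(\mathbb P_0\) is simply normalized arclength measure on \(\Sigma\). The embeddings \(F_\pm\) have speed \(\sqrt3\), and each component has length \(2\pi\sqrt3\). Consequently \(\mathcal H^1_{|\Sigma_\pm}=\sqrt3\,(F_\pm)_\#\mathcal H^1_{|\mathbb S^1}=2\pi\sqrt3\,\gamma_\pm\), and normalizing gives \(\mathbb P_0=\frac12(\gamma_++\gamma_-)\). The one point to check here is that the angular coordinates are isometric on \((\mathbb S^1)^3\), so that the Hessian in \(\theta\) is the Riemannian Hessian. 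This holds at critical points, and the normalization in any case cancels any constant factor.

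For the selection, each \(\gamma_\pm\) lies in \(\Pi(\mu,\mu,\mu)\), because rotations preserve the uniform measure. Therefore \(\mathbb P_0\in\Pi(\mu_1,\mu_2,\mu_3)\) and \(\entropy(\mathbb P_0\,|\,\mathbb P_0)=0\). Since relative entropy is nonnegative and vanishes only at \(\mathbb P_0\), it follows that \(\mathbb P_0\) is the unique minimizer of \(\mathcal F^{(1)}\). By the multi-marginal version of Theorem~A(iii), the entropic plans converge to \(\mathbb P_0\).

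I expect the only mildly delicate step to be the careful identification of the normal Hessian on the embedded torus, in particular the intrinsic versus ambient Hessian and the normal frame. The argument only needs that \(\det\Hn{z}\) is constant along \(\Sigma\). This constancy also follows from symmetry: the \(\mathrm{SO}(2)\) action by diagonal rotation acts transitively on each component \(\Sigma_\pm\). The reflection \((x_1,x_2,x_3)\mapsto(\bar x_1,\bar x_2,\bar x_3)\), where \(\bar x\) is complex conjugation, preserves \(E\) and swaps \(\Sigma_+\) and \(\Sigma_-\).
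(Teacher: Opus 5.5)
Your proposal is correct and follows essentially the paper's route: the paper differentiates $E$ twice along the geodesics $\exp_{x_i}(\tau v_i)$, which in your angle chart are just $\theta_i+\tau v_i$. It obtains the same matrix, $\hess^\perp E=3\,\mathrm{Id}$ with determinant $9$, and the same speed-$\sqrt3$ normalization giving $\mathbb P_0=\frac12(\gamma_++\gamma_-)$, and it concludes with the same selection argument from $\mathbb P_0\in\Pi(\mu_1,\mu_2,\mu_3)$ and the nonnegativity of relative entropy. Your caveat about critical points is unnecessary, since the angle chart is a local isometry onto the flat product $(\mathbb S^1)^3$, so the coordinate Hessian is the Riemannian Hessian everywhere.
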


\begin{proof}
We verify the normal-rank hypothesis and apply the Gibbs formula
of \Cref{thm: multimarginal-extension}:
    \[\mathbb P_0 = \frac{1}{Z_0} (\det \hess^\perp E)^{-1/2} \prod_{i=1}^N \rho_i \mathcal H^1_{| \Sigma}.\]
We compute the normal Hessian and the normalization. For \(i=1,2,3\), introduce the geodesics
\[\nu_i(\tau ) =\exp_{x_i}(\tau v_i), \]
where \(v_i \in T_{x_i}\mathbb S^1\). We also introduce \(S(\tau) \defeq \nu_1(\tau) + \nu_2(\tau) + \nu_3(\tau)\), then we have
\[\frac{\dd }{\dd \tau }E\big( \nu_1(\tau) , \nu_2(\tau) , \nu_3(\tau)\big) = 2 \langle \dot S(\tau) , S(\tau) \rangle, \]
and 
\[\frac{\dd^2 }{\dd\tau^2}E\big( \nu_1(\tau) , \nu_2(\tau) , \nu_3(\tau)\big) = 2\langle \ddot S(\tau) , S(\tau)\rangle + 2|\dot S(\tau)|^2.\]
Thus, for \(\tau =0\), since \( S(0)= 0\), we obtain
\[\frac{\dd^2 }{\dd\tau^2}E\big( \nu_1(\tau) , \nu_2(\tau) , \nu_3(\tau)\big)\Big|_{\tau=0}= 2|v_1 + v_2 + v_3|^2,\]
and denoting \(V = (v_1, v_2,v_3)\) we have 
\[\hess E(\x)[V,V]=2|v_1 + v_2 + v_3|^2. \]
Let \(e_i \defeq Jx_i\), where
\[
J=
\begin{pmatrix}
0 & -1\\
1 & 0
\end{pmatrix},
\]
so that \(e_i\) is a unit tangent vector to \(\mathbb S^1\) at \(x_i\). Then
\[
E_1=(e_1,0,0),\qquad E_2=(0,e_2,0),\qquad E_3=(0,0,e_3)
\]
is an orthonormal basis of \(T_{\x}(\mathbb S^1)^3\). We obtain
\[
\hess E(\x)[E_i,E_i]=2,
\qquad
\hess E(\x)[E_i,E_j]
=
2\langle e_i,e_j\rangle
=
2\langle x_i,x_j\rangle
=
-1,
\]
for \(i\neq j\), since \(\x\in\Sigma\). Hence
\[
[\hess E(\x)]
=
\begin{pmatrix}
2 & -1 & -1\\
-1 & 2 & -1\\
-1 & -1 & 2
\end{pmatrix}.
\]
Moreover,
\[
T_{\x}\Sigma
=
\spn \{E_1+E_2+E_3\},
\]
and therefore
\[
N_{\x}\Sigma
=
\left\{
aE_1+bE_2+cE_3
\;|\;
a+b+c=0
\right\}.
\]
On this normal space, the above matrix acts as \(3\id\). Thus
\[
\hess^\perp E(\x)=3\id_{N_{\x}\Sigma},
\]
Thus \(\operatorname{rank}\hess E=2=(N-1)d\), as required
by \Cref{thm: multimarginal-extension}, and
\(\det\hess^\perp E=9\). The induced arclength must also
be included in the normalization: the maps \(F_\pm\) have speed
\(\sqrt3\) when the unit circle is parametrized by arclength. Hence
\[
 \mathcal H^1(\Sigma_\pm)=2\pi\sqrt3,
 \qquad Z_0=(2\pi)^{-3}\frac{4\pi\sqrt3}{3}
 =\frac{1}{2\sqrt3\,\pi^2}.
\]
Consequently,
\[
 \mathbb P_0=\frac{1}{4\pi\sqrt3}\mathcal H^1_{|\Sigma}
 =\frac12(\gamma_++\gamma_-).
\]
Since \(\entropy(\cdot \,|\,\mathbb P_0) \) is nonnegative and \(\mathbb P_0 \in \Pi(\mu_i)\), we conclude that the selected transport plan is \(\mathbb P_0\).
\end{proof}
\subsubsection{Repulsive harmonic cost in \texorpdfstring{\(\R^d\)}{Rd}}
\label{subsubsec: harmonic-Rd}
Let \(N\geq3\), \(d\geq1\), and consider the repulsive harmonic cost
\[
c(\x)\defeq-\sum_{1\leq i<j\leq N}|x_i-x_j|^2,
\qquad
\x=(x_1,\ldots,x_N)\in(\R^d)^N.
\]
For \(i=1,\ldots,N\), let \(\mu_i=\rho_i\dd x\), where \(\rho_i\) vanishes outside a compact set \(K_i\subset\R^d\) with \(\mathcal L^d(\partial K_i)=0\), and is bounded and continuous on \(K_i\). We take for \(M_i\) a closed ball containing a neighborhood of \(K_i\), and we set \(\Omega\defeq\prod_iM_i\), \(\rho(\x)\defeq\prod_i\rho_i(x_i)\) and \(b\defeq\sum_i\int x\dd\mu_i(x)\). We assume:
\begin{enumerate}
\item[(M)]\label{M-condition} There exists \(i_0\in\{1,\ldots,N\}\) such that \(\mu_{i_0}\) is the law of \(b-\sum_{i\neq i_0}X_i\), where the random variables \(X_i\), \(i\neq i_0\), are independent with laws \(\mu_i\).
\end{enumerate}
Under (M), the law \(\gamma_M\) of \((X_1,\ldots,X_N)\), where \(X_{i_0}\defeq b-\sum_{i\neq i_0}X_i\), belongs to \(\Pi(\mu_1,\ldots,\mu_N)\) and is concentrated on the affine subspace
\[
\Sigma_b\defeq\Bigl\{\x\in(\R^d)^N:\sum_{i=1}^Nx_i=b\Bigr\},
\]
of dimension \((N-1)d\) and codimension \(k=d\). Since
\[
c(\x)=\Bigl|\sum_{i=1}^Nx_i-b\Bigr|^2+\sum_{i=1}^N\overline\varphi_i(x_i),
\qquad
\overline\varphi_i(x)\defeq-N|x|^2+2\langle b,x\rangle-\frac{|b|^2}N,
\]
the functions \(\overline\varphi_i\) are Kantorovich potentials, \(\gamma_M\) is optimal, the duality gap is \(E(\x)=|\sum_ix_i-b|^2\), and \(\Sigma=\Sigma_b\cap\Omega\). The normal space to \(\Sigma_b\) is spanned by the orthonormal vectors \((e_p,\ldots,e_p)/\sqrt N\), \(p=1,\ldots,d\), so that
\[
\Phi(z,s)\defeq z+\frac{(s,\ldots,s)}{\sqrt N},
\qquad z\in\Sigma_b,\ s\in\R^d,
\]
defines global normal coordinates: \(\Phi\) is an isometry from \(\Sigma_b\times\R^d\) onto \((\R^d)^N\), \(\mathcal J_\Phi\equiv1\), \(\dist(\Phi(z,s),\Sigma_b)=|s|\) and \(E(\Phi(z,s))=N|s|^2\). In particular, \(\Hn{z}=2N\,I_d\), and the remainder \(\mathcal R\) of the Taylor expansion of \(E\) vanishes.

\begin{theorem}[Repulsive harmonic cost in \(\R^d\)]
\label{thm: harmonic-Rd}
Assume (M). Then the conclusions of \Cref{thm: convergence-Gibbs-measure} and of \hyperref[thm: A]{Theorem~A} hold, with \(\Pi(\mu_1,\ldots,\mu_N)\) in place of \(\Pi(\mu_1,\mu_2)\), \(k=d\), and
\[
\mathbb P_0\defeq\frac{\rho\,\mathcal H^{(N-1)d}_{|\Sigma_b}}{\int_{\Sigma_b}\rho\dd\mathcal H^{(N-1)d}},
\qquad
Z_0\defeq(2N)^{-d/2}\int_{\Sigma_b}\rho\dd\mathcal H^{(N-1)d}.
\]
Moreover, the selected plan is \(\gamma_M\), and
\[
\min\mathcal F_\varepsilon=\OT(\mu_1,\ldots,\mu_N)+\frac d2\varepsilon\log(1/\varepsilon)-\varepsilon\Bigl(\entropy(\mu_{i_0})+\frac d2\log\pi\Bigr)+o(\varepsilon),
\]
where \(\entropy(\mu_{i_0})\defeq\int\rho_{i_0}\log\rho_{i_0}\dd x\).
\end{theorem}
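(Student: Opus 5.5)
The plan is to rerun the Euclidean scheme of \Cref{thm: euclidean-Gibbs} and \Cref{thm: euclidean-selection}, not to quote \Cref{thm: multimarginal-extension}. That theorem does not apply as stated: here $\Sigma_b$ has dimension $(N-1)d$ and codimension $k=d$, and the densities $\rho_i$ are only bounded and continuous on $K_i$, not Lipschitz nor bounded below. The compensation is that the geometry is flat. The map $\Phi(z,s)=z+(s,\ldots,s)/\sqrt N$ is a global isometry, $\mathcal J_\Phi\equiv1$, $E(\Phi(z,s))=N|s|^2$ exactly and $\mathcal R\equiv0$. All tubular-coordinate and Taylor-remainder estimates are therefore exact identities, and the only place where regularity entered was the marginal estimate.

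\textbf{Step 1 (Gibbs measure).} By Fubini in the coordinates $\Phi$,
\[
(2\pi\varepsilon)^{-d/2}\int\phi\, e^{-E/\varepsilon}\dd\mu^\otimes
=(2\pi)^{-d/2}\int_{\Sigma_b}\int_{\R^d}(\phi\rho)\bigl(\Phi(z,\sqrt\varepsilon s)\bigr)e^{-N|s|^2}\dd s\dd\mathcal H^{(N-1)d}(z).
\]
The integrand is bounded by $\|\phi\|_\infty\prod_i\|\rho_i\|_\infty e^{-N|s|^2}$, and the $z$-integration effectively runs over a bounded set. It converges pointwise off $\Sigma_b\cap\partial\mathcal K$. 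This set is $\mathcal H^{(N-1)d}$-null: parametrize $\Sigma_b$ by $(x_i)_{i\neq i_0}$ and use $\mathcal L^d(\partial K_i)=0$, together with the fact that $\{x_{i_0}\in\partial K_{i_0}\}$ is null after the affine change $x_{i_0}=b-\sum_{i\neq i_0}x_i$. Dominated convergence then gives both $Z_\varepsilon=(2\pi\varepsilon)^{d/2}(Z_0+o(1))$ with $Z_0=(2N)^{-d/2}\int_{\Sigma_b}\rho$ and the weak convergence $\gibbs\rightharpoonup\mathbb P_0$, since $(2\pi)^{-d/2}\int e^{-N|s|^2}\dd s=(2N)^{-d/2}$. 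The liminf inequality then follows as in Step~1 of \Cref{subsec: selection-value-asymptotics}.

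\textbf{Step 2 (recovery sequence).} Since $\Hn{z}=2NI_d$ does not depend on $z$, the normal perturbation of a plan $\overline\gamma$ with finite $\entropy(\cdot\,|\,\mathbb P_0)$ is a convolution, $\widetilde\gamma_{\varepsilon,R}=\overline\gamma*(\iota_\varepsilon)_\#(K_R\,\dd s)$, where $\iota_\varepsilon(s)=\sqrt\varepsilon(s,\ldots,s)/\sqrt N$. Each marginal $\widetilde\mu_{i,\varepsilon,R}$ is therefore $\mu_i$ convolved with a fixed centered, even, compactly supported kernel of variance of order $\varepsilon$. I then prove the $d$-dimensional analogue of \Cref{prop: 1D}: interpolate by $t\sqrt\varepsilon s$, write the flux $\flow[f]$, bound it by $\sqrt\varepsilon\|f\|_{L^1}$ for general $f$ and by $\varepsilon\|\nabla f\|_{L^1}$ for $f\in C^1_c$ using the centering, approximate $\rho_i$ in $L^1$, and conclude with Benamou--Brenier. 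This yields $W_2(\widetilde\mu_{i,\varepsilon,R},\mu_i)=o_R(\sqrt\varepsilon)$ using only $\rho_i\in L^1$. This is the step I expect to need the most care, because it is precisely the point that replaces \Cref{thm: Wasserstein between the marginals}, which relied on Lipschitz densities. The entropy part follows Steps~2--4 of \Cref{thm: euclidean-selection}, working with Lebesgue entropies, which is why no lower bound on $\rho_i$ is needed:
\begin{itemize}
\item The correction term is handled by \Cref{prop: shadow-energy}, since $E$ is a polynomial with bounded Hessian.
\item Step~3 becomes an exact computation, with $\mathcal R=0$ and $\mathcal J_\Phi=1$.
\item Step~4 uses lower semicontinuity of the marginal entropies. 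These entropies are finite because $\rho_i$ is bounded with compact support.
\end{itemize}
The diagonal argument in $R$ then gives the $\Gamma$-expansion and the expansion of the value, with $k=d$.

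\textbf{Step 3 (selection and constant).} Parametrize $\Sigma_b$ by $y=(x_i)_{i\neq i_0}$. The graph $x_{i_0}=b-\sum_{i\neq i_0}x_i$ has Gram determinant $N^d$, so $\dd\mathcal H^{(N-1)d}=N^{d/2}\dd y$. Hence $\gamma_M$ has density $N^{-d/2}\prod_{i\neq i_0}\rho_i$ with respect to $\mathcal H^{(N-1)d}_{|\Sigma_b}$, and
\[
\frac{\dd\gamma_M}{\dd\mathbb P_0}=\frac{N^{-d/2}\int_{\Sigma_b}\rho}{\rho_{i_0}(x_{i_0})}.
\]
This ratio depends only on $x_{i_0}$. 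For every $\gamma\in\Pi(\mu_1,\ldots,\mu_N)$ with finite entropy,
\[
\entropy(\gamma\,|\,\mathbb P_0)=\entropy(\gamma\,|\,\gamma_M)+\int\log\frac{\dd\gamma_M}{\dd\mathbb P_0}\dd\mu_{i_0},
\]
and the last term does not depend on $\gamma$. So $\gamma_M$ is the unique minimizer, and it is the selected plan. Moreover
\[
\entropy(\gamma_M\,|\,\mathbb P_0)=-\tfrac d2\log N+\log\int_{\Sigma_b}\rho-\entropy(\mu_{i_0}).
\]
Subtracting $\log Z_0=-\tfrac d2\log(2N)+\log\int_{\Sigma_b}\rho$ and $\tfrac d2\log(2\pi)$ gives $\min\mathcal F^{(1)}=-\entropy(\mu_{i_0})-\tfrac d2\log\pi$, which is the stated expansion.
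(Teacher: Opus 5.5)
Your proposal is correct and follows essentially the paper's proof: you rerun the Euclidean Gibbs and selection arguments in the global isometric normal coordinates, replace the marginal estimate by the $d$-dimensional version of \Cref{prop: 1D} (which needs only $\rho_i\in L^1$), and identify $\gamma_M$ as the selected plan because $\dd\gamma_M/\dd\mathbb P_0$ depends only on $x_{i_0}$. Two details are left implicit, and the paper spells them out: first, that $Z_0>0$, which follows from $\gamma_M\ll\mathcal H^{(N-1)d}_{|\Sigma_b}$ and $\rho>0$ $\gamma_M$-a.e.; second, that $\int|\log\overline\rho|\dd\overline\gamma<\infty$ now comes from $\int|\log\rho|\dd\overline\gamma\le\sum_i\int\rho_i|\log\rho_i|\dd x<\infty$ together with $\entropy(\overline\gamma\,|\,\mathbb P_0)<\infty$, rather than from a lower bound on the density of $\mathbb P_0$.
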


\begin{proof}
We follow the proofs of \Cref{thm: euclidean-Gibbs,thm: euclidean-selection}, with \(\Sigma_b\) in place of the \(d\)-dimensional contact set: these proofs do not use the dimension of the contact set, and they are simpler here, since the normal coordinates are global and \(E\) is exactly quadratic in them.

\textbf{Step 1. Gibbs measures and \(\Gamma\)-\(\liminf\).} The proof of \Cref{thm: euclidean-Gibbs} applies with \(\mathcal H^{(N-1)d}\) in place of \(\mathcal H^d\), \(\mathcal R\equiv0\), \(\mathcal J_\Phi\equiv1\) and \(\mathcal T_r=\Sigma_r\). Its assumption \(\mathcal H^{(N-1)d}(\Sigma_b\cap\partial\mathcal K)=0\), where \(\mathcal K=\prod_iK_i\), follows from \(\mathcal L^d(\partial K_i)=0\): for \(j\neq i\), parametrizing \(\Sigma_b\) by the coordinates \((x_l)_{l\neq j}\), the set \(\Sigma_b\cap\{x_i\in\partial K_i\}\) is the image of a negligible set by a Lipschitz map. Moreover, \(Z_0>0\), since \(\rho>0\) \(\gamma_M\)-a.e.\ and \(\gamma_M\ll\mathcal H^{(N-1)d}_{|\Sigma_b}\) (see Step~2). Since \(\det\Hn{z}=(2N)^d\) is constant, this gives \(\gibbs\rightharpoonup\mathbb P_0\) and \(Z_\varepsilon=(2\pi\varepsilon)^{d/2}(Z_0+o(1))\), and the \(\Gamma\)-\(\liminf\) inequality follows as in Step~1 of \Cref{subsec: selection-value-asymptotics}.

\textbf{Step 2. A finite-entropy optimal plan.} Parametrizing \(\Sigma_b\) by \((x_i)_{i\neq i_0}\), whose surface Jacobian is \(N^{d/2}\), we get \(\gamma_M=N^{-d/2}\prod_{i\neq i_0}\rho_i(x_i)\,\mathcal H^{(N-1)d}_{|\Sigma_b}\). Hence
\begin{equation}
\label{eq: harmonic-Rd-density}
\frac{\dd\gamma_M}{\dd\mathbb P_0}(\x)=\frac{C_M}{\rho_{i_0}(x_{i_0})},
\qquad
C_M\defeq N^{-d/2}\int_{\Sigma_b}\rho\dd\mathcal H^{(N-1)d},
\end{equation}
and \(\entropy(\gamma_M\,|\,\mathbb P_0)=\log C_M-\entropy(\mu_{i_0})<\infty\), since \(\rho_{i_0}\) is bounded with compact support. This replaces Step~3 of \Cref{subsec: selection-value-asymptotics}, where the bound \(\overline\gamma\leq\beta\mathcal H^d_{|\Sigma}\) used \(\dim\Sigma=d\).

\textbf{Step 3. The marginals.} Fix \(\overline\gamma\in\Pi(\mu_1,\ldots,\mu_N)\) with \(\entropy(\overline\gamma\,|\,\mathbb P_0)<\infty\), and \(R\geq1\). Since \(\Hn{z}=2N\,I_d\) does not depend on \(z\), neither does the kernel \(K_R(z,s)=K_R(s)\propto e^{-N|s|^2}\mathbf 1_{\ball_R}(s)\). Hence, by \eqref{eq: recovery sequence with wrong marginals 1}, \(\widetilde\gamma_{\varepsilon,R}\) is the law of \(Z+\sqrt{\varepsilon/N}\,(S,\ldots,S)\), where \(Z\sim\overline\gamma\) and \(S\sim K_R\) are independent, and its \(i\)-th marginal is
\[
\widetilde\mu_{i,\varepsilon,R}=\mu_i*\eta_\varepsilon,
\qquad
\eta_\varepsilon\defeq\text{law of }\sqrt{\varepsilon/N}\,S.
\]
Writing \(\sqrt{\varepsilon/N}\,S=\sqrt{\varepsilon/(2N^2)}\,S'\), where \(S'=\sqrt{2N}\,S\) has density proportional to \(e^{-|s'|^2/2}\) on \(\ball_{\sqrt{2N}R}\), we are in the setting of \Cref{prop: 1D}, with \(\varepsilon/(2N^2)\) and \(\sqrt{2N}R\) in place of \(\varepsilon\) and \(R\). Its proof applies verbatim in \(\R^d\), and gives \(W_2(\widetilde\mu_{i,\varepsilon,R},\mu_i)=o_R(\sqrt\varepsilon)\).

\textbf{Step 4. Energy and entropy.} The rest of the proof of \Cref{thm: euclidean-selection} is unchanged. The energy estimate of \Cref{prop: shadow-energy} holds, since \(|\nabla E(\mathbf y)|=2N\dist(\mathbf y,\Sigma_b)\) for every \(\mathbf y\in(\R^d)^N\). In Step~3 of the proof of \Cref{thm: euclidean-selection}, the remainder and Jacobian terms vanish, and \(\int|\log\overline\rho|\dd\overline\gamma<\infty\) follows from \(\entropy(\overline\gamma\,|\,\mathbb P_0)<\infty\) and \(\int|\log\rho|\dd\overline\gamma\leq\sum_i\int\rho_i|\log\rho_i|\dd x<\infty\).

\textbf{Step 5. Selection and value.} By \eqref{eq: harmonic-Rd-density}, \(\dd\gamma_M/\dd\mathbb P_0\) only depends on \(x_{i_0}\). Hence, for every \(\gamma\in\Pi(\mu_1,\ldots,\mu_N)\) with \(\entropy(\gamma\,|\,\mathbb P_0)<\infty\), as in the proof of \Cref{prop: MPW-selection},
\[
\entropy(\gamma\,|\,\mathbb P_0)=\entropy(\gamma\,|\,\gamma_M)+\int\log\frac{C_M}{\rho_{i_0}}\dd\mu_{i_0}=\entropy(\gamma\,|\,\gamma_M)+\entropy(\gamma_M\,|\,\mathbb P_0),
\]
so that \(\gamma_M\) is the unique minimizer of \(\entropy(\cdot\,|\,\mathbb P_0)\) on \(\Pi(\mu_1,\ldots,\mu_N)\), that is, the selected plan. Finally, since \(C_M=2^{d/2}Z_0\),
\[
\min\mathcal F^{(1)}=\entropy(\gamma_M\,|\,\mathbb P_0)-\log Z_0-\frac d2\log(2\pi)
=-\entropy(\mu_{i_0})-\frac d2\log\pi,
\]
and the expansion of \(\min\mathcal F_\varepsilon\) follows from \hyperref[thm: A]{Theorem~A}.
\end{proof}

\begin{remark}
Condition (M) only involves the marginals: it means that \(\mu_{i_0}\) is the image of \(\mu_1*\cdots*\mu_{i_0-1}*\mu_{i_0+1}*\cdots*\mu_N\) by \(x\mapsto b-x\); in particular, \(\rho_{i_0}\) is then automatically continuous. Since a convex combination of plans concentrated on \(\Sigma_b\) with bounded densities is again such a plan, all the conclusions of \Cref{thm: harmonic-Rd}, except the explicit selected plan and value, remain valid when \((\mu_1,\ldots,\mu_N)\) is a convex combination of \(N\)-tuples of marginals satisfying (M) with the same \(b\). For \(N=2\), (M) means that \(\mu_2\) is the image of \(\mu_1\) by \(x\mapsto b-x\), and the cost is equivalent to \(|x_1+x_2|^2\): the expansion above is then consistent with \Cref{prop: quadratic-expansion}, after the change of variables \(x_2\mapsto-x_2\) and the rescaling \(\varepsilon\mapsto\varepsilon/2\).
\end{remark}

\noindent{\textbf{Acknowledgments.}} L.N., P.P. and L.T. acknowledge  the support of the FMJH Program PGMO and the ANR project GOTA (ANR-23-CE46-0001). B.P. is pleased to acknowledge the support of Natural Sciences and Engineering Research
Council of Canada Discovery Grant number 04864-2024. The authors used AI-assisted tools to help check proofs and ensure consistency of notation, and take full responsibility for the final manuscript.

\printbibliography
\end{document}